\title[]{Compressible fluids and active potentials}
\author{Peter Constantin, Theodore D. Drivas, Huy Q. Nguyen, and Federico Pasqualotto}
\address{Department of Mathematics, Princeton University, Princeton, NJ 08544}
\email{const@math.princeton.edu}
\address{Department of Mathematics, Princeton University, Princeton, NJ 08544}
\email{tdrivas@math.princeton.edu}
\address{Department of Mathematics, Princeton University, Princeton, NJ 08544}
\email{qn@math.princeton.edu}
\address{Department of Mathematics, Princeton University, Princeton, NJ 08544 \\ \newline
 and DPMMS, University of Cambridge, Cambridge CB3~0WA, United Kingdom}
\email{fp2@math.princeton.edu}
\newcommand{\bq}{\begin{equation}}
\newcommand{\eq}{\end{equation}}
\newcommand{\bqa}{\begin{eqnarray*}}
\newcommand{\eqa}{\end{eqnarray*}}
\newcommand{\Rr}{\mathbb{R}}
\newcommand{\T}{\mathbb{T}}
\newcommand{\la}{\label}
\newcommand{\be}{\begin{equation}}
\newcommand{\ee}{\end{equation}}
\newcommand{\ba}{\begin{array}{l}}
\newcommand{\ea}{\end{array}}
\newcommand{\red}[1]{\textcolor{red}{#1}}
\theoremstyle{plain}
\newtheorem{theo}{Theorem}[section]
\newtheorem{prop}[theo]{Proposition}
\newtheorem{lemm}[theo]{Lemma}
\theoremstyle{definition}
\newtheorem{rema}[theo]{Remark}
\DeclareSymbolFont{pletters}{OT1}{cmr}{m}{sl}
\DeclareMathSymbol{s}{\mathalpha}{pletters}{`s}
\def\la{\left\lvert}
\def\le{\leq}
\def\mez{\frac{1}{2}}
\def\ra{\right\rvert}
\def\tdm{\frac{3}{2}}
\newcommand{\de}{\text{d}}
\def\p{\partial}
\def\cl{ \underline{\rho} }
\numberwithin{equation}{section}
\begin{document}

\begin{abstract}
We consider a class of one dimensional compressible systems with degenerate diffusion coefficients. We establish the fact that the solutions remain smooth as long as the diffusion coefficients do not vanish, and give local and global existence results. The models include the barotropic compressible Navier-Stokes equations, shallow water systems and the lubrication approximation of slender jets. In all these models the momentum equation is forced by the gradient of a solution-dependent potential: the active potential. The method of proof uses the Bresch-Desjardins entropy and the analysis of the evolution of the active potential.\\
\phantom{0}\hfill \today
\end{abstract}

\keywords{compressible flow, shallow water, slender jet, global existence}
\subjclass[2010]{76N10, 35Q30, 35Q35}


\maketitle

\section{Introduction}

We consider a class of compressible fluid models in one space dimension with periodic boundary conditions:
\begin{align}
	&\partial_t \rho + \partial_x (u \rho ) = 0, \label{eq:mass} \\
	&\partial_t (\rho u) + \partial_x (\rho u^2) =- \partial_x p(\rho) + \partial_x(\mu(\rho) \partial_xu)+\rho f\label{eq:mom}, \\
	&(\rho,u)|_{t = 0} = (\rho_0, u_0) \label{eq:initv}
\end{align}
with constitutive laws given by
\be\label{EOS}
p(\rho) = c_p  \rho^{\gamma}, \qquad \mu(\rho)=c_\mu \rho^\alpha, \qquad  c_p \neq 0, \ c_\mu>0.
\ee

Among these models are the one-dimensional barotropic compressible Navier-Stokes equations.  
In this description, $\rho$ is the mass density, $u$ is the fluid velocity, and $p(\rho)$, $\mu(\rho)$ 
are the fluid pressure and dynamic viscosity respectively.  These are given by physical equations of state \eqref{EOS}.
For such systems, the specific heat at constant pressure is positive $c_p >0$ so that $p(\rho)$ is non-negative.
The viscosity is also assumed non-negative $c_\mu>0$ but may be degenerate in the sense that it
vanishes for $\rho=0$.  

Although the eqns. \eqref{eq:mass}--\eqref{eq:initv} describe cases of compressible Navier-Stokes equations,
they serve also as models for a number of other physical systems if the basic variables and constitutive laws are appropriately defined. For example, a model for  viscous incompressible
motion of shallow water waves \cite{perthame2001,Marche2007} reads
\begin{align}
	&\partial_t h + \partial_x (u h) = 0, \label{sww:h} \\
	&\partial_t(hu) + \partial_x(hu^2) + \frac{g}{2}\partial_x h^2= 4\nu\partial_x (h \partial_x u)+hf\label{sww:v}
\end{align}
where 
 \begin{itemize}
 \item $h$ and $u$ represent respectively the surface height and fluid velocity,
 \item $g$ is gravity,
 \item $\nu>0$ is the kinematic viscosity,
 \item $f$ is the external force. 
 \end{itemize}
These equations are a special case of equations \eqref{eq:mass}-\eqref{eq:mom} with 
\[
p(\rho)=\frac{g}{2}\rho^2\quad\text{and}\quad \mu(\rho)=4\nu\rho.
\]
Equations \eqref{eq:mass}--\eqref{eq:initv} also appear in the theory of drop formation as the slender jet equations \cite{JD94, JF15}:
\begin{align}
	&\partial_t h + u \p_xh =-\mez\p_xu h , \label{jet:h} \\
	&\partial_tu + u\p_xu + \gamma\partial_x (\frac{1}{h}) = 3\nu\frac{\partial_x(h^2\partial_xu)}{h^2}-g\label{jet:v},
\end{align}
 where 
 \begin{itemize}
 \item $h$ and $u$ represent respectively the neck radius and velocity of the jet,
 \item $\gamma>0$ is the surface tension coefficient,
 \item $\nu>0$ is the kinematic viscosity,
 \item $g>0$ is gravity. 
 \end{itemize}
These equations arise as a reduction of the axisymmetric incompressible Navier-Stokes equations in two spatial dimensions  governing a thin liquid thread with a moving boundary.  Via the change of variables $\rho=h^2$, equations \eqref{jet:h}-\eqref{jet:v} become equations \eqref{eq:mass}-\eqref{eq:mom} with 
\[
p(\rho)=-\gamma\sqrt \rho\quad\text{and}\quad \mu(\rho)=3\nu\rho.
\]
 Note that here 
 the ``pressure" that appears is non-positive in contrast with the Navier-Stokes descriptions.

 In all the settings above, the one-dimensional equations \eqref{eq:mass}--\eqref{eq:initv} are approximate
 models of the underlying physical processes, whose quality may vary depending on the situation.  
As models for dissipative molecular fluids, they are not known to arise as an effective description by a controlled hydrodynamic limit and do not conserve total energy.  See Section A and Appendix B of \cite{ED18} for an extended discussion. 
Of course, they could be valid descriptions of fluid systems in other situations than these, as is the case of the shallow water and slender jet.   Moreover, J. Eggers has argued that the slender
 jet equations described above become an exact description asymptotically close to drop pinch--off, justifying the use of the model \eqref{jet:h}, \eqref{jet:v}  in that context.

 Four theorems are proved.  The first result, Theorem \ref{theo:cont}, provides a blowup criterion for equations \eqref{eq:mass}--\eqref{eq:initv} with a wide range of constitutive pressure and viscosity laws \eqref{EOS}.  In what follows, we denote by $\T$ the interval $(0, 1]$ with periodic boundary conditions.

\begin{theo}\label{theo:cont} 
	Assume any of the following three conditions
	\begin{enumerate}
		\item[(i)] $c_p >0$ and $\alpha> \mez$, $\gamma\neq 1,\   \gamma\geq \alpha-\mez$,
		\item[(ii)] $c_p <0$ and  $\mez<\alpha\le \tdm$, $\gamma<1$, $0<\gamma\leq \alpha$,
		\item[(iii)] $c_p>0$ and $\gamma>1$, $\alpha\ge 0$.
	\end{enumerate}
	Let $k\geq 3$ and assume further that
	\begin{equation*}
	 f\in L^2(0,T; H^{k-1}(\T))  \quad \text{for all} \quad  T > 0.
	\end{equation*}
	If $(\rho, u)$ is a solution of \eqref{eq:mass}-\eqref{eq:initv} on $[0, T^*)$ such that 
	\be\label{class}
	\rho \in C(0, T; H^k(\T)),\quad u\in C(0, T; H^k(\T))\cap L^2(0, T; H^{k+1}(\T)), \quad\forall T\in (0, T^*)
	\ee
	and
	\[
	\inf_{t\in [0, T^*)} \min_{x\in \T}\rho (x, t)>0,
	\]
	then $(\rho, u)$ satisfies
	\be
	\sup_{T\in [0,T^*)} \Vert \rho\Vert_{L^\infty(0, T;  H^k)}+\sup_{T\in [0,T^*)}\Vert u\Vert_{L^\infty(0, T; H^k)}+\sup_{T\in [0,T^*)}\Vert u\Vert_{L^2(0, T; H^{k+1})}<\infty
	\ee
	and can be continued in the class \eqref{class} past $T^*$.
\end{theo}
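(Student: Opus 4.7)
The statement is a continuation criterion: under the standing lower bound $\rho\geq \underline{\rho}>0$, all higher Sobolev norms remain finite up to $T^\ast$, so a local existence result can be reapplied to extend the solution. My plan is therefore to derive a priori estimates that are uniform on $[0,T^\ast)$ and depend only on the data, the force, $T^\ast$, and $\underline{\rho}$, closed by a Gr\"onwall argument.

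First I would establish the basic energy identity: multiplying \eqref{eq:mom} by $u$ and using \eqref{eq:mass}, one obtains
\[
\frac{\di}{\di t}\int_\T \Bigl(\tfrac12 \rho u^2 + \cP(\rho)\Bigr)\di x + \int_\T \mu(\rho)(\p_x u)^2\, \di x = \int_\T \rho u f\,\di x,
\]
where $\cP''(\rho)=p'(\rho)/\rho$, so $\cP(\rho)=\frac{c_p}{\gamma-1}\rho^\gamma$. In both cases (i) and (ii) the sign conditions give $\cP(\rho)\geq 0$ (in (ii), both $c_p$ and $\gamma-1$ are negative), yielding $L^\infty_t L^2_x$ control of $\sqrt\rho u$ and $L^2_t L^2_x$ control of $\sqrt{\mu(\rho)}\p_x u$. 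The next, crucial step is the Bresch--Desjardins entropy: with $\varphi'(\rho)=\mu(\rho)/\rho^2 = c_\mu\rho^{\alpha-2}$, one tests the momentum equation against $\p_x\varphi(\rho)$ and combines with \eqref{eq:mass} to obtain an identity for $\tfrac12\int \rho(u+\p_x\varphi(\rho))^2$, whose dissipation term is proportional to $c_p c_\mu\gamma\int \rho^{\gamma+\alpha-3}(\p_x\rho)^2 = \text{const}\cdot \int(\p_x \rho^{(\gamma+\alpha-1)/2})^2$. In case (i) this term is coercive and yields time-uniform control of $\p_x \rho^{\alpha-1/2}$ in $L^2_x$; in case (ii) the pressure term has the wrong sign but can be absorbed using $\gamma\le \alpha$ together with the already-controlled energy. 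The conditions $\alpha>\tfrac12$ and $\gamma\ge \alpha-\tfrac12$ (respectively $0<\gamma\le\alpha$) are precisely what make this absorption legal.

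Combining mass conservation $\int\rho = M_0$ with the BD bound and the 1D Sobolev embedding $H^1(\T)\hookrightarrow L^\infty(\T)$, I extract a time-uniform \emph{upper} bound $\rho\le\overline\rho$. Together with the hypothesis $\rho\ge\underline\rho>0$, this makes $\mu(\rho)$ uniformly positive and bounded, so that rewriting \eqref{eq:mom} in non-conservative form
\[
\p_t u + u\p_x u = -\frac{p'(\rho)}{\rho}\p_x\rho + \frac{1}{\rho}\p_x\bigl(\mu(\rho)\p_x u\bigr) + f
\]
yields a uniformly parabolic equation for $u$ with coefficients bounded in terms of $\|\rho\|_{H^1}$. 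Standard parabolic energy estimates give $u\in L^\infty_t H^1_x\cap L^2_t H^2_x$, and then the continuity equation, viewed as a transport equation for $\rho$ with drift $u$ and source $-\rho\p_x u$, propagates $H^1$ regularity of $\rho$ via Gr\"onwall.

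The main step is the inductive bootstrap from $H^1$ to $H^k$. I would commute $\p_x^j$ with the momentum equation for $1\le j\le k$, pair with $\p_x^j u$, and handle the viscous term $\p_x^j\p_x(\mu(\rho)\p_x u)$ by splitting off $\mu(\rho)\p_x^{j+2}u$ (the good parabolic term) and controlling the commutator $[\p_x^{j+1},\mu(\rho)]\p_x u$ with Moser-type product estimates; similarly for the pressure term. In parallel, differentiating \eqref{eq:mass} $j$ times and pairing with $\p_x^j\rho$ produces stretching contributions of the form $\int \p_x^j\rho\cdot \p_x^j(\rho\p_x u)$ whose worst piece $\int(\p_x^j\rho)^2 \p_x u$ is controlled by $\|\p_x u\|_{L^\infty}\|\p_x^j\rho\|_{L^2}^2$, to be absorbed by Gr\"onwall after using $\|\p_x u\|_{L^\infty}\lesssim \|u\|_{H^2}$ (already in $L^2_t$). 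The main obstacle is to organize these high-order estimates so that every term involving $\p_x^{j+1}\rho$ or $\p_x^{j+1}u$ either carries a factor $\sqrt{\mu(\rho)}$ (absorbable into the parabolic dissipation) or is of lower order after integration by parts, and so that the force $f\in L^2_tH^{k-1}_x$ is just enough to close the estimate at the top level $j=k$. Once a closed differential inequality for $\|\rho\|_{H^k}^2+\|u\|_{H^k}^2$ with integrable-in-time coefficients is obtained, Gr\"onwall yields the desired uniform bound on $[0,T^\ast)$, and a standard local existence result permits continuation past $T^\ast$.
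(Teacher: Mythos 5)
The proof plan shares the same foundations as the paper (energy balance, Bresch--Desjardins entropy, and deriving an $L^\infty$ bound on $\rho$ from mass conservation plus the BD bound and the 1D embedding $H^1\hookrightarrow L^\infty$), but the high-order bootstrap is done by a genuinely different route. You commute $\p_x^j$ directly with the momentum equation for $u$ and close the estimates on the coupled $(\rho,u)$ system via commutator/Moser estimates and Gr\"onwall. The paper instead introduces the \emph{active potential} $w:=-p(\rho)+\mu(\rho)\p_x u$, shows (Proposition \ref{LemwEqn}) that $w$ satisfies a forced quadratic heat equation with linear drift and diffusion coefficient $\mu(\rho)/\rho$, and then performs energy estimates on the coupled $(\rho,w)$ system (Lemmas \ref{lemm:wL2}--\ref{lemm:wH2}), closing a bound for $\p_x^k\rho$ together with $\p_x^{k-1}w$ at each level $k$. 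The structural advantage of $w$ is that it contains one derivative of $u$ and \emph{no} derivative of $\rho$, and its equation depends on $\rho$ only through smooth algebraic powers (never through $\p_x\rho$ in the principal terms). This means the commutator pieces arising in the $H^{k-1}$ estimate for $w$ involve only $\Vert\rho\Vert_{H^{k-1}}$, controlled by the previous level of the induction. In your approach, the commutator from $\p_x^k\big(\tfrac{\mu(\rho)}{\rho}\p_x^2 u\big)$ after one integration by parts brings in $\Vert\rho\Vert_{H^{k-1}}\Vert\p_x^2u\Vert_{L^\infty}\Vert\p_x^{k+1}u\Vert_{L^2}$, and at the bottom level $k=3$ the factor $\Vert\p_x^2u\Vert_{L^\infty}$ is not bounded by the inductive hypothesis; one has to interpolate $\Vert\p_x^2u\Vert_{L^\infty}\lesssim\Vert\p_x^2u\Vert_{L^2}^{1/2}\Vert\p_x^3u\Vert_{L^2}^{1/2}$ and then borrow from the dissipation, which works but is noticeably more delicate than ``standard parabolic energy estimates plus Gr\"onwall.'' The same remark applies to the cubic term $\int(\p_x u)^3$ at the $H^1$ level: to close it, you need the time-integrability of $\Vert\p_x u\Vert_{L^2}^2$ coming from the basic energy estimate, which is the analogue of the paper's use of $\Vert w\Vert_{L^2(0,T;L^2)}$ in Lemma \ref{lemm:wL2}. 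The paper's choice of variable is also not cosmetic within the larger work: the same $w$-equation underlies the maximum-principle argument in Theorem \ref{theo:global2}.

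One small but genuine error: you assert that $\cP(\rho)\geq0$ in \emph{both} cases. In case (i) one can have $c_p>0$ with $\gamma\in(0,1)$ (the hypothesis is only $\gamma\geq\alpha-\tfrac12>0$ and $\gamma\neq1$), in which case $\cP(\rho)=\frac{c_p}{\gamma-1}\rho^\gamma<0$. The paper handles this by bounding $|\pi(\rho)|\leq|\frac{c_p}{\gamma-1}|(\rho+1)$ using mass conservation (Lemma \ref{PiCont:0}, part 2), rather than using sign positivity of the potential energy. The fix in your argument is to replace the positivity claim with this bound when $\gamma<1$.
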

Theorem \ref{theo:cont} says that the only possible way for a singularity to form starting from smooth data is if the density becomes zero somewhere in the domain. This applies in particular to the viscous shallow water wave equations \eqref{sww:h}-\eqref{sww:v}.  In the slender jet equations \eqref{jet:h}-\eqref{jet:v} which model incompressible fluid drop formation, this says that singularities can only form at the onset of drop break-off.  This answers a conjecture of P. Constantin recorded in \cite{JD94}. 

\begin{rema}
The conclusions of Theorem \ref{theo:cont} hold whenever  an upper bound on the density of the form \eqref{rhoBnd2} exists, possibly dependent on the minimum density $\underline{\rho}$.  Under any of the conditions (i), (ii), (iii) of the Theorem, we produce such a bound.  However, it seems unlikely that (i)--(iii)  are fundamental restrictions, and the result should hold over  larger range conditions.
\end{rema}

\begin{rema}
\cite{Hoff2001} proved that weak solutions of 1D compressible Navier-Stokes equations with constant viscosity do not exhibit vacuum states in finite time provided no vacuum states are present initially.
\end{rema}
\begin{rema}
Local well-posedness of \eqref{eq:mass}--\eqref{eq:initv}  in the class \eqref{class} is established in Proposition \ref{prop:local} of the Appendix \ref{appendix:Loc} for arbitrary smooth $p(\rho)$ and smooth non-negative $\mu(\rho)$. This covers the special case of power law equations of state \eqref{EOS} in the entire parameters range in Theorem \ref{theo:cont}. Local existence of strong solution for 2D shallow water equations can be found in \cite{AnTon, Li2017}. We also refer to \cite{Matsumura1980, Matsumura1983} for classical results regarding equations of compressible viscous and heat-conductive fluids with constant viscosity. 
\end{rema}
Our next two theorems concern the long-time existence and persistence of regularity.  Theorem \ref{theo:global}
establishes global existence for arbitrarily large data, within a range of pressure and viscosity of the form \eqref{EOS}.
\begin{theo}\label{theo:global}
	Assume
	$$ c_p >0,  \quad\alpha\in (\mez,1],  \text{ and }\quad \gamma\geq 2\alpha.
	$$  
	Let $k\ge 3$ be an integer and let $\rho_0$ and $u_0$ belong to $H^k(\T)$ such that $\rho_0(x)>0$ for all $x\in \T$.
	Assume further that
	\begin{equation*}
	 f\in L^2(0,T; H^{k-1}(\T))  \quad \text{for all} \quad  T > 0.
	\end{equation*}
	Then there exists a unique global solution $(\rho, u)$ to \eqref{eq:mass}-\eqref{eq:initv} such that 
	\[
	\rho\in C(0, T; H^k(\T)),\quad u\in C(0, T; H^k(\T))\cap L^2(0, T; H^{k+1}(\T))
	\]
	for all $T>0$, and $\rho(x, t)>0$ for all $(x, t)\in \T\times \Rr^+$.
\end{theo}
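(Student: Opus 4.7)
The plan is to invoke the continuation criterion of Theorem~\ref{theo:cont} and reduce global existence to a uniform-in-time lower bound on the density. By Proposition~\ref{prop:local} the data admit a unique solution $(\rho,u)$ in the class \eqref{class} on a maximal interval $[0,T^*)$ with $\rho>0$. The hypotheses $c_p>0$, $\alpha\in(\mez,1]$, $\gamma\geq 2\alpha$ fall under case (i) of Theorem~\ref{theo:cont} (note $\gamma\geq 2\alpha>1$ and $\gamma\geq 2\alpha\geq\alpha-\mez$), so it will suffice to show: for every finite $T>0$, there exists $c(T)>0$ with $\rho(x,t)\geq c(T)$ on $\T\times[0,\min(T,T^*))$. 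A restart argument then forces $T^*=+\infty$.

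The first ingredient is a pair of \emph{a priori} bounds from the energy and Bresch--Desjardins entropy. Testing \eqref{eq:mom} with $u$ gives the standard identity
\begin{equation*}
\frac{d}{dt}\int_\T\!\Big(\tfrac12\rho u^2+\tfrac{c_p}{\gamma-1}\rho^\gamma\Big)\,dx+c_\mu\int_\T\rho^\alpha(\partial_x u)^2\,dx=\int_\T\rho u f\,dx,
\end{equation*}
which, together with mass conservation $\int_\T\rho\,dx=\int_\T\rho_0\,dx$, controls $\sqrt\rho\,u$ in $L^\infty_t L^2_x$ and $\rho^\gamma$ in $L^\infty_t L^1_x$. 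To harvest information on $\partial_x\rho$ I would introduce the \emph{active potential}
\begin{equation*}
\psi(\rho)=\tfrac{c_\mu}{\alpha-1}\rho^{\alpha-1}\ \ (\alpha\neq 1),\qquad \psi(\rho)=c_\mu\log\rho\ \ (\alpha=1),
\end{equation*}
chosen so that $\psi'(\rho)=\mu(\rho)/\rho^2$, along with the modified velocity $w=u+\partial_x\psi(\rho)$. A direct manipulation of \eqref{eq:mass}--\eqref{eq:mom} yields $\partial_t(\rho w)+\partial_x(\rho uw)=-\partial_xp(\rho)+\rho f$, and testing by $w$ produces the Bresch--Desjardins identity
\begin{equation*}
\frac{d}{dt}\int_\T\!\Big(\tfrac12\rho w^2+\tfrac{c_p}{\gamma-1}\rho^\gamma\Big)\,dx+c_p c_\mu\gamma\!\int_\T\rho^{\alpha+\gamma-3}(\partial_x\rho)^2\,dx=\int_\T\rho w f\,dx,
\end{equation*}
whose dissipative integral is nonnegative precisely because $c_p>0$. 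Gronwall combined with the forcing hypothesis gives $L^\infty_t L^2_x$ control of $\partial_x\rho^{(2\alpha-1)/2}$ and $L^2_t L^2_x$ control of $\partial_x\rho^{(\alpha+\gamma-1)/2}$.

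Since $\alpha>\mez$, the former bound puts $\rho^{(2\alpha-1)/2}$ in $L^\infty_t H^1_x$, and the 1D embedding $H^1(\T)\hookrightarrow L^\infty(\T)$ together with mass conservation immediately yield an upper bound $\rho(x,t)\leq C(T)$. The pointwise \emph{lower} bound is the crux. I would obtain it by bounding $\int_0^T\|\partial_x u(\cdot,t)\|_{L^\infty}\,dt$, for then the characteristic identity $\tfrac{d}{dt}\log\rho(X_t,t)=-\partial_x u(X_t,t)$ and $\min_x\rho_0>0$ produce the desired $c(T)>0$. To estimate $\partial_x u$ I would use the effective viscous flux $\mathcal F=\mu(\rho)\partial_xu-p(\rho)$, whose spatial derivative is $\rho(\partial_t+u\partial_x)u-\rho f$ and which is controlled through the energy and BD bounds; the decomposition $\partial_x u=(\mathcal F+p(\rho))/\mu(\rho)$ then reduces the task to controlling $\rho^{\gamma-\alpha}$ and $\mathcal F/\rho^\alpha$. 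The requirement $\gamma\geq 2\alpha$ enters at exactly this point: it is what allows the pressure contribution $p/\mu\sim\rho^{\gamma-\alpha}$ to be absorbed by the $L^2_t L^\infty_x$ bound on $\rho^{(\alpha+\gamma-1)/2}$ produced by the pressure dissipation, combined with the already-established upper bound on $\rho$.

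With the two-sided bound on $\rho$ in hand, Theorem~\ref{theo:cont} forces $T^*=+\infty$ and propagates the regularity \eqref{class} globally; uniqueness follows from a standard $L^2$ energy estimate for the difference of two solutions with the same data. The principal obstacle is precisely the lower bound: the Bresch--Desjardins entropy alone yields $H^1$ control of $\rho^{(2\alpha-1)/2}$, and since $2\alpha-1>0$ this quantity vanishes together with $\rho$, so the BD entropy does not by itself preclude vacuum. It is the quantitative coupling of the BD estimate with the pressure-induced dissipation, made possible by $c_p>0$ and $\gamma\geq 2\alpha$, that produces the control on $\rho^{-1}$ needed to close the argument.
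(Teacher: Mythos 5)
Your reduction to a lower bound on the density via Theorem~\ref{theo:cont}, and your derivation of the upper bound on $\rho$ from the Bresch--Desjardins entropy (through the $L^\infty_tH^1_x$ control of $\rho^{\alpha-1/2}$), match the paper. But the step you propose to produce the \emph{lower} bound --- bounding $\int_0^T\Vert\partial_x u\Vert_{L^\infty}\,\de t$ through the decomposition $\partial_x u=\mathcal F/\mu(\rho)+p(\rho)/\mu(\rho)$ and running characteristics --- has a genuine gap, and in fact is circular as written. Since $p/\mu=(c_p/c_\mu)\rho^{\gamma-\alpha}$ with $\gamma-\alpha>0$, that term is \emph{already} controlled by the upper bound on $\rho$ and does not need the $L^2_tL^\infty_x$ control of $\rho^{(\alpha+\gamma-1)/2}$; the whole difficulty is the other term, and
\[
\frac{\mathcal F}{c_\mu\rho^\alpha}=\partial_x u-\frac{c_p}{c_\mu}\rho^{\gamma-\alpha},
\]
so ``controlling $\mathcal F/\rho^\alpha$'' is, up to a harmless bounded term, just controlling $\partial_x u$ again. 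You assert that $\mathcal F$ ``is controlled through the energy and BD bounds,'' but the energy bounds give only $\rho^{\alpha/2}\partial_x u\in L^2_tL^2_x$, hence $\mathcal F\in L^2_tL^2_x$, and no amount of $L^2$ control of $\mathcal F$ yields $\mathcal F/\rho^\alpha\in L^1_tL^\infty_x$ without the very lower bound on $\rho$ you are trying to prove. You have also misplaced where the hypothesis $\gamma\geq 2\alpha$ enters: it is not used to absorb $p/\mu$.

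The paper closes the argument along a genuinely different route. Define $X=u+c_\mu\rho^{\alpha-2}\partial_x\rho$ (your $w$). Instead of aiming at $\partial_x u$, the paper uses the \emph{transport equation} satisfied by $X$,
\[
\partial_t X+u\,\partial_x X=-\gamma\tfrac{c_p}{c_\mu}\rho^{\gamma-\alpha}X+\gamma\tfrac{c_p}{c_\mu}\rho^{\gamma-\alpha}u+f,
\]
and shows that the forcing $\rho^{\gamma-\alpha}u$ lies in $L^2_tW^{1,1}_x\hookrightarrow L^2_tL^\infty_x$; it is precisely here that $\gamma\geq 2\alpha$ (together with $\alpha\in(\tfrac12,1]$, hence $\gamma-\alpha\geq\alpha+\tfrac12$) is used, since $\partial_x(\rho^{\gamma-\alpha}u)$ is written in terms of $\sqrt\rho\,\partial_x u$, $\rho^{\alpha-3/2}\partial_x\rho$ and $\sqrt\rho\,u$, all controlled by the energy and BD entropy. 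This yields $\Vert X\Vert_{L^\infty_{t,x}}\leq M$. Then $X\rho^{\gamma-\alpha}-u\rho^{\gamma-\alpha}=c_\mu\rho^{\gamma-2}\partial_x\rho\in L^2_tL^\infty_x$, which plugged into the $u$-equation and the maximum principle gives $\Vert u\Vert_{L^\infty_{t,x}}\leq M$. Finally $X-u=\tfrac{c_\mu}{\alpha-1}\partial_x(\rho^{\alpha-1})$ (or $c_\mu\partial_x\ln\rho$ if $\alpha=1$) is in $L^\infty_{t,x}$; a bound on the spatial mean of $\rho^{\alpha-1}$ from the continuity equation, Poincar\'e--Wirtinger, and Sobolev give $\Vert\rho^{\alpha-1}\Vert_{L^\infty_{t,x}}\leq M$, and since $\alpha-1\leq 0$ this is the lower bound on $\rho$. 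The essential idea missing from your proposal is thus the use of the \emph{transport structure of $X$} rather than a Lipschitz bound on $u$, together with the algebraic identity $X-u=\tfrac{c_\mu}{\alpha-1}\partial_x(\rho^{\alpha-1})$ that converts the $L^\infty$ control of $X$ and $u$ directly into $L^\infty$ control of $\partial_x(\rho^{\alpha-1})$.
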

 This result applies to the viscous shallow water equations \eqref{sww:h}-\eqref{sww:v}, giving an alternative proof to that of \cite{H14}. Let us note that \cite{H14} assumes only $H^1$ regularity of initial data. Moreover, Theorem \ref{theo:global} allows for more singular density dependence of the viscosity than in \cite{MV06}, which considers the case of $\alpha<\mez$ and $\gamma>1$. In two dimensions, global stability of constant solutions to shallow water equations was proved in \cite{Sundbye1996, Sundbye1998,Kloeden1985}.

For more degenerate viscosity $\rho^\alpha$ allowing $\alpha>1$, we prove global existence for a class of large initial data.
\begin{theo}\label{theo:global2}
Assume that $c_p>0$ and either 
\begin{align}
\label{assumThm1.2}
&\alpha>\mez, \quad \gamma\in [\alpha,\alpha+1], \quad \gamma \neq 1\quad\text{or}\\ \label{assumThm1.2:2}
&\alpha\ge 0,\quad \gamma\in [\alpha,\alpha+1],\quad \gamma>1.
\end{align}
Assume further that
\[
f(x, t)=f(t)\in L^2((0, T))\quad\forall T>0.
\]
Let $k\ge 4$ be an integer and let $u_0$ and $\rho_0$ belong to $H^k(\T)$ such that $\rho_0(x)>0$ for all $x\in \T$ and 
\be\label{initialDataCond}
\partial_x u_0(x)\le  \frac{c_p }{c_\mu} \rho_0(x)^{\gamma-\alpha} \quad\forall x\in \T.
\ee
Then there exists a unique global solution $(\rho, u)$ to \eqref{eq:mass}-\eqref{eq:initv} such that 
\[
\rho\in C(0, T; H^k(\T)),\quad u\in C(0, T; H^k(\T))\cap L^2(0, T; H^{k+1}(\T))
\]
for all $T>0$, and $\rho(x, t)>0$ for all $(x, t)\in \T\times \Rr^+$.
\end{theo}
\begin{rema}
We note that  \eqref{initialDataCond} does not impose any smallness conditions on the initial data. The unique global solution in Theorem \ref{theo:global} satisfies
\[
\partial_x u (x, t)\le  \frac{c_p }{c_\mu} \rho(x,t)^{\gamma-\alpha} 
\]
for all $(x, t)\in \T\times \Rr^+$.  Moreover, the proof provides a lower bound for the minimum of density $\rho$, see \eqref{firstmin} and \eqref{secondmin},
\[
\min_{x\in \T}\rho(x, t)\geq
\begin{cases}
  \left(\rho_m(0)^{\alpha -\gamma} + t\frac{c_p }{c_\mu}(\gamma-\alpha)\right)^{\frac{-1}{\gamma-\alpha}}\quad\text{when}~\gamma>\alpha,\\
  \rho_m(0)\exp\left(-t \frac{c_p}{c_\mu}\right)\quad\qquad\qquad\quad\text{when}~\gamma=\alpha.
  \end{cases}
\]
\end{rema}
Our last theorem establishes a bound on the time-averaged maximum density for a certain range of parameters assuming mean zero forcing.
\begin{theo}\label{densBndProp}
	Assume that $(\rho, u)$ is a sufficiently smooth solution to the system~\eqref{eq:mass}--\eqref{eq:initv} on $[0,T^*)$.
	Assume that 
	\begin{equation}\label{zeromean}
	f = \p_x g
	\end{equation}
	for some periodic function  $g$ satisfying 
	\[
	g\in L^\infty(0,T^*; L^\infty(\T)),\quad \text{and}\quad\p_xg, \p_tg \in L^\infty(0,T^*; L^\infty(\T)).
	\]
	Let us also assume that
	\begin{equation*}
	\alpha \geq 1/2,\quad \gamma \in [\max\{2-\alpha,\alpha\}, \alpha + 1], \quad \text{and} \quad c_p, c_\mu > 0.
	\end{equation*}
	Then, we have the following bound
	\begin{equation}
	\frac 1 T \int_0^T \Vert \rho(\cdot,t)\Vert_{L^\infty(\T)}\de t \leq C_1 + \frac{1}{T} C_2,
	\end{equation}
	where $C_1$ and $C_2$ 
	are defined in equation~\eqref{eq:m4m5}. In particular, $C_1$ depends only on $c_\mu$, $c_p$, $\alpha$, $\gamma$, $\Vert\rho_0 \Vert_{L^1}$, $\Vert \p_xg\Vert_{L^\infty(0, T; L^\infty)}$, and $\Vert \p_t g \Vert_{L^\infty(0,T; L^\infty)}$, whereas $C_2$ depends only on $c_\mu$, $c_p$, $\gamma$, $\alpha$, $\Vert\rho_0 \Vert_{L^\infty}$, $\Vert\rho^{-1}_0 \Vert_{L^\infty}$, $\Vert u_0\Vert_{L^2}$, $\Vert \p_x \rho_0 \Vert_{L^2}$, and $\Vert g \Vert_{L^\infty(0,T; L^\infty)}$. Consequently, if $T^*=\infty$ then 
	\begin{equation}\label{long-timebnd}
	\limsup_{T\to \infty}\frac 1 T \int_0^T \Vert \rho(\cdot,t)\Vert_{L^\infty(\T)}\de t \leq C_3
	\end{equation}
	where $C_3$ depends only on $c_\mu$, $c_p$, $\alpha$, $\gamma$, $\Vert\rho_0 \Vert_{L^1}$, $\Vert \p_xg\Vert_{L^\infty(0, \infty; L^\infty)}$, and $\Vert \p_t g \Vert_{L^\infty(0,\infty; L^\infty)}$.
\end{theo}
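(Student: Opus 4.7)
My approach combines the Bresch--Desjardins entropy identity with the one-dimensional Sobolev embedding $W^{1,1}(\T)\hookrightarrow L^\infty(\T)$ and conservation of mass. Since $M:=\int_\T\rho_0\,dx=\int_\T\rho(\cdot,t)\,dx$ for all $t$, the elementary bound
\begin{equation*}
\|\rho(\cdot,t)\|_{L^\infty(\T)} \leq \frac{M}{|\T|} + \|\partial_x\rho(\cdot,t)\|_{L^1(\T)}
\end{equation*}
reduces the problem to controlling $\int_0^T\|\partial_x\rho\|_{L^1}\,dt$ in the form $C_1 T + C_2$ with the claimed dependencies on the data.

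\textbf{Step 1: Bresch--Desjardins identity.} I would introduce the effective velocity $v:= u+\partial_x\varphi(\rho)$ with $\varphi'(\rho)=\mu'(\rho)/\rho = c_\mu\alpha\rho^{\alpha-2}$. Combining the usual energy identity for $\tfrac12\int(\rho u^2+2P(\rho))\,dx$ with the transport relation $\partial_t\varphi(\rho)+u\partial_x\varphi(\rho)=-\mu'(\rho)\partial_xu$ produces the standard BD identity
\begin{equation*}
\frac{d}{dt}\int_\T\bigl(\tfrac12\rho v^2+P(\rho)\bigr)dx + c_\mu\int_\T\rho^\alpha(\partial_xu)^2\,dx + \frac{4c_pc_\mu\alpha\gamma}{(\alpha+\gamma-1)^2}\int_\T\bigl(\partial_x\rho^{(\alpha+\gamma-1)/2}\bigr)^2dx = \int_\T\rho f v\,dx,
\end{equation*}
with $P$ the thermodynamic potential adapted to $p(\rho)=c_p\rho^\gamma$. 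Under our hypotheses ($c_p,c_\mu>0$ and $\alpha,\gamma>0$) the two dissipations are nonnegative, and the pressure--viscosity cross dissipation will give, upon time integration, control of $\|\partial_x\rho^{(\alpha+\gamma-1)/2}\|_{L^2}^2$, which is the object that measures oscillations of $\rho$.

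\textbf{Step 2: Forcing with $f=\partial_xg$.} I split the right-hand side as $\int\rho u\partial_xg\,dx+\int\rho\partial_xg\cdot\partial_x\varphi(\rho)\,dx$. The continuity equation gives
\begin{equation*}
\int_\T\rho u\partial_xg\,dx = \int_\T\partial_t\rho\,g\,dx = \frac{d}{dt}\int_\T\rho g\,dx - \int_\T\rho\partial_tg\,dx,
\end{equation*}
so the first piece contributes a total time derivative (absorbable into the BD energy up to a boundary correction $\leq M\|g\|_{L^\infty}$) plus a residual dominated by $M\|\partial_tg\|_{L^\infty}$. For the second piece, the key identity $\rho\partial_x\varphi(\rho) = c_\mu\partial_x\rho^\alpha$ together with the factorization $\partial_x\rho^\alpha = \tfrac{2\alpha}{\alpha+\gamma-1}\rho^{(\alpha-\gamma+1)/2}\partial_x\rho^{(\alpha+\gamma-1)/2}$ lets H\"older and Young's inequality absorb a fraction of the dissipation and leave a term $C\|\partial_xg\|_{L^\infty}^2\|\rho^{(\alpha-\gamma+1)/2}\|_{L^2}^2$. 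The hypothesis $\gamma\in[\alpha,\alpha+1]$ forces $\alpha-\gamma+1\in[0,1]$, so Jensen bounds this $L^2$ norm in terms of $M$ and $|\T|$ alone. Integrating in time yields
\begin{equation*}
\int_0^T\|\partial_x\rho^{(\alpha+\gamma-1)/2}\|_{L^2}^2\,dt \leq A + BT,
\end{equation*}
where $A$ depends on the initial BD energy (hence on $\|\rho_0\|_{L^\infty}$, $\|\rho_0^{-1}\|_{L^\infty}$, $\|u_0\|_{L^2}$, $\|\partial_x\rho_0\|_{L^2}$, $\|g\|_{L^\infty}$) while $B$ depends only on $M$, $\|\partial_xg\|_{L^\infty}$, $\|\partial_tg\|_{L^\infty}$, and the parameters.

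\textbf{Step 3: Conclusion and main obstacle.} I write $\partial_x\rho = \tfrac{2}{\alpha+\gamma-1}\rho^{(3-\alpha-\gamma)/2}\partial_x\rho^{(\alpha+\gamma-1)/2}$ and apply Cauchy--Schwarz to get $\|\partial_x\rho\|_{L^1}\leq C\|\rho^{(3-\alpha-\gamma)/2}\|_{L^2}\|\partial_x\rho^{(\alpha+\gamma-1)/2}\|_{L^2}$. The hypothesis $\gamma\geq 2-\alpha$ gives $3-\alpha-\gamma\leq 1$, so (in the admissible range where the exponent is nonnegative) Jensen bounds $\int\rho^{3-\alpha-\gamma}\,dx$ by a constant depending only on $M$ and $|\T|$. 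Cauchy--Schwarz in time followed by the splitting $\sqrt{AT}\leq A/\eta+\eta T$ produces $\int_0^T\|\partial_x\rho\|_{L^1}\,dt \leq C_1 T+C_2$, and dividing by $T$ yields the main estimate; the $\limsup$ statement \eqref{long-timebnd} is then immediate. The main obstacle I anticipate is the bookkeeping in Step 2: one must absorb the $\partial_x\varphi(\rho)$ forcing entirely into the dissipation so that no initial-data norm enters the linear-in-$T$ constant $C_1$, while simultaneously confining all initial-data dependencies (and $\|g\|_{L^\infty}$) into the $T$-independent constant $C_2$. The endpoints $\alpha=1$ and $\gamma=1$ require logarithmic analogues of $\varphi$ and $P$, but the same mechanism applies.
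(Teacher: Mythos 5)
Your proposal captures the correct mechanism for the forcing term (integrating by parts using $f=\p_xg$, commuting to $\p_t\rho$, splitting off a total time derivative, and absorbing the $\rho^{\alpha-1}\p_x\rho\, f$ piece into the Bresch--Desjardins dissipation using $0\le\alpha-\gamma+1\le 1$), and that matches the paper exactly. The conclusion step, however, has a genuine gap. You reduce to controlling $\int_0^T\Vert\p_x\rho\Vert_{L^1}\,dt$ via $W^{1,1}(\T)\hookrightarrow L^\infty(\T)$ applied to $\rho$ itself, and then write $\p_x\rho=\tfrac{2}{\alpha+\gamma-1}\rho^{(3-\alpha-\gamma)/2}\p_x\rho^{(\alpha+\gamma-1)/2}$ and Cauchy--Schwarz, which requires $\Vert\rho^{(3-\alpha-\gamma)/2}\Vert_{L^2}^2=\int_\T\rho^{3-\alpha-\gamma}\,dx$ to be controlled by mass conservation. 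You flag this yourself (``in the admissible range where the exponent is nonnegative''), but the theorem's hypotheses permit $3-\alpha-\gamma<0$: for $\alpha>1$ and $\gamma$ near $\alpha+1$ (and for all of $\alpha>3/2$, since then $\gamma\ge\alpha$ forces $\alpha+\gamma>3$), the exponent is strictly negative, and $\int\rho^{3-\alpha-\gamma}$ cannot be bounded by $\Vert\rho_0\Vert_{L^1}$ alone; one would need a uniform lower bound on $\rho$, which is not available in this theorem.

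The paper avoids the issue by applying the $W^{1,1}\hookrightarrow L^\infty$ embedding directly to the power $h=\rho$ with exponent $m=\tfrac12(\alpha+\gamma-1)$ (its Lemma~\ref{lem:easy}): $\Vert\rho\Vert_{L^\infty}\le 2\Vert\p_x(\rho^m)\Vert_{L^1}^{1/m}+4\Vert\rho\Vert_{L^1}$, so that the BD dissipation $\int_0^T\Vert\p_x(\rho^m)\Vert_{L^2}^2\,dt$ is used as-is; one never differentiates $\rho$ itself and hence never produces a negative power of $\rho$. The role of $\gamma\ge 2-\alpha$ there is to ensure $m\ge\tfrac12$, i.e.\ $1/m\le 2$, so that $\Vert\p_x(\rho^m)\Vert_{L^2}^{1/m}\le\Vert\p_x(\rho^m)\Vert_{L^2}^2+1$ integrates against the BD bound. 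If you replace your Step~3 with this lemma (and drop the factorization of $\p_x\rho$), your argument closes for the full parameter range; otherwise it is restricted to $\alpha+\gamma\le 3$.
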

Theorem \ref{densBndProp} applies for the viscous shallow water wave system \eqref{sww:h},\eqref{sww:v} for which global existence is established by Theorem \ref{theo:global}.   The interpretation of the bound \eqref{long-timebnd} with $h\equiv \rho$ is that long-time average of the maximum surface height remains bounded, showing that, on average, no extreme events can develop.
\begin{rema}
Modulo technical conditions, Theorems \ref{theo:cont}, \ref{theo:global}, \ref{theo:global2} and \ref{densBndProp} should hold for more general constitutive laws $\mu(\rho)$ and $p(\rho)$ that behave asymptotically when $\rho\to 0$ as $c_\mu\rho^\alpha$ and $c_p\rho^\gamma$ respectively. The high regularity of initial data  in the above Theorems is assumed to apply maximum principles straightforwardly. By appealing to more refined maximum principles, the regularity of initial data can be reduced. 
\end{rema}
The proofs are based on use of the Bresch-Desjardins entropy and analysis of the evolution of the active potential $w$. This object is the potential in the momentum equation \eqref{eq:mom}: its gradient is the force
\begin{align}
	\rho D_tu  = \partial_x w.
\end{align}
The potential 
\[
w=- p(\rho) +\mu(\rho) \partial_xu.
\]
is unknown and combines the viscous stress with the pressure.  
As $w$ depends on the unknowns and in turn determines their evolution, we refer to it as an \emph{active potential}.  Remarkably, $w$ satisfies a {\it forced quadratic heat equation with linear drift and less degenerate diffusion} with the new dissipation term $\frac{\mu(\rho)}{\rho}\p_x^2w$. The active potential $w$ contains one derivative of $u$ and no derivative of $\rho$. On one hand, energy estimates for the coupled system of $\rho$ and $w$ allow us to control all the high Sobolev regularity of $\rho$ and $u$ as long as $\rho$ is positive, leading to the proof of Theorem \ref{theo:cont}. On the other hand, the heat equation for $w$ satisfies a maximum principle which enables us to obtain global regular solutions for a class of large data when the viscosity is strongly degenerate as in Theorem \ref{theo:global2}. 

The fact that the active potential solves a nondegenerate evolution with a maximum principle was observed in \cite{CENV} in the context of a 1D Hele Shaw model, where it served a similar role. The effective viscous flux used in \cite{Hoff1995} and \cite{Lions} is an active potential: there it was used by inverting the elliptic (nondegenerate) equation it solves at each fixed time. 

\section{A priori estimates: mass, energy and Bresch-Desjardins's entropy}

Assume that $(\rho, u)$ is a solution of \eqref{eq:mass}-\eqref{eq:initv} on the time interval $[0, T^*)$ such that 
\[
\rho\in C(0, T; H^3),\quad u\in C(0, T; H^3)\cap L^2(0, T; H^4)
\]
for any $T<T^*$ and
\bq\label{c0}
 \cl :=\inf_{t\in [0, T^*)}\min_{x\in\T}\rho(x, t)>0.
\eq
In what follows we denote by $M(\cdot,\  \cdots, \cdot)$ a positive function that is increasing in each argument.

First, from the continuity equation \eqref{eq:mass}, total mass is conserved:
\be\label{massConsbnd}
\Vert \rho(\cdot, t)\Vert_{L^1(\T)}=\Vert \rho_0\Vert_{L^1(\T)}.
\ee
We have the following standard energy balance:
\begin{lemm}[Energy Balance]\label{energy:0}
Let $\bar \rho \geq 0$, and
\be\label{piEqn}
e:= \frac{1}{2} \rho |u |^2 + \pi (\rho), \qquad 
\pi(\rho)= \rho \int_{{\bar \rho}}^\rho \frac{p(s)}{s^2} ds.
\ee
Then, the balance
\begin{equation} \label{energyBalance}
	\frac{\de}{\de t}\int_{\T}  e(x,t) \de x  = -\int_{\T} \mu(\rho) |\partial_x u|^2\de x+\int_\T f\rho u\de x
\end{equation}
holds for any $t\in [0, T^*)$.
\end{lemm}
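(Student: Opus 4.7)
The plan is to derive the balance \eqref{energyBalance} by splitting the total energy $e$ into its kinetic part $\tfrac12\rho u^2$ and its internal part $\pi(\rho)$, computing the time derivative of each part separately, and then observing that the pressure–work terms cancel when the two are added. The regularity \eqref{class} and the lower bound \eqref{c0} on $\rho$ render every integration by parts and chain-rule manipulation legitimate, since $\mu,\pi,\pi'$ are smooth functions of $\rho$ on an interval bounded away from zero.

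For the kinetic part, I would multiply the momentum equation \eqref{eq:mom} by $u$ and use the identity
\begin{equation*}
u\,\partial_t(\rho u)+u\,\partial_x(\rho u^2)=\partial_t\!\left(\tfrac12\rho u^2\right)+\partial_x\!\left(\tfrac12\rho u^3\right)+\tfrac12 u^2\bigl(\partial_t\rho+\partial_x(\rho u)\bigr),
\end{equation*}
where the last bracket vanishes by \eqref{eq:mass}. Integrating over $\T$ and moving one derivative off the pressure term and off $\mu(\rho)\partial_x u$, all boundary contributions being absent by periodicity, yields
\begin{equation*}
\frac{d}{dt}\int_\T \tfrac12\rho u^2\,dx=\int_\T p(\rho)\,\partial_x u\,dx-\int_\T\mu(\rho)|\partial_x u|^2\,dx+\int_\T\rho f u\,dx.
\end{equation*}

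For the internal part, a direct differentiation of the definition of $\pi$ in \eqref{piEqn} gives $\pi'(\rho)=\int_{\bar\rho}^\rho p(s)/s^2\,ds+p(\rho)/\rho$, whence the thermodynamic relation
\begin{equation*}
\rho\pi'(\rho)-\pi(\rho)=p(\rho).
\end{equation*}
(The value of $\bar\rho$ only shifts $\pi$ by a multiple of $\rho$, and that shift is constant in time by the mass conservation \eqref{massConsbnd}, so it is immaterial.) Using \eqref{eq:mass} I would write $\partial_t\pi(\rho)=-\pi'(\rho)\partial_x(\rho u)=-\pi'(\rho)\rho\,\partial_x u-u\,\partial_x\pi(\rho)$, and after integration by parts on the second term the identity above yields
\begin{equation*}
\frac{d}{dt}\int_\T\pi(\rho)\,dx=-\int_\T p(\rho)\,\partial_x u\,dx.
\end{equation*}

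Adding the two identities cancels the pressure-work term $\int_\T p(\rho)\,\partial_x u\,dx$ exactly and produces \eqref{energyBalance}. There is no real obstacle in the argument; the only point requiring care is the precise algebraic identity $\rho\pi'-\pi=p$, which is what dictates the form of $\pi(\rho)$ chosen in \eqref{piEqn} and is the reason for including the factor $\rho$ outside the integral.
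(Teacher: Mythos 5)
Your proof is correct. The paper states Lemma \ref{energy:0} as ``standard'' and supplies no proof of its own, so there is nothing to compare against; your argument (multiply \eqref{eq:mom} by $u$, use the thermodynamic identity $\rho\pi'(\rho)-\pi(\rho)=p(\rho)$ implicit in the definition \eqref{piEqn}, and add so the pressure-work terms cancel) is the usual one and all the manipulations check out.
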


Using the equation of state for the density \eqref{EOS} and recalling that $\bar{\rho}\geq 0$ is an arbitrary constant that we are free to fix, we have an explicit formula for $\pi(\rho)$ from \eqref{piEqn}
\bq\label{formula:pi}
\pi(\rho)= c_p \rho \int_{\bar{\rho}}^\rho s^{\gamma-2}ds =\begin{cases}
\frac{c_p }{\gamma-1}\rho^{\gamma}  &\quad \gamma >1,\ \bar{\rho}=0\quad \text{or} \quad \gamma \in(0,1), \ \bar{\rho}=\infty,\\
c_p \rho \log(\rho)  &\quad \gamma = 1, \ \bar \rho =1.
\end{cases}
\eq
Note that 	the function $\pi$ satisfies
	$$
	\pi''(\rho) = \frac{p'(\rho)}{\rho}.
	$$	
\begin{lemm}\label{PiCont:0}
1. If $\gamma\in (1,\infty)$ and $c_p >0$, then $\pi(\rho)\geq 0$ and
\be\label{Picont1}
\|e\|_{L^\infty (0,T;L^1)} +\Vert \mu(\rho)|\p_x\rho|^2\Vert_{L^1(0, T; L^1)}\leq \left(\Vert e(\cdot, 0)\Vert_{L^1}+\Vert f\Vert_{L^2(0, T; L^\infty)}^2\Vert \rho_0\Vert_{L^1(\T)}\right)\exp(2T).
\eq
2.  If $\gamma\in(0,1)$ and $c_p \neq 0$, then
\be\label{bound:pi:neg}
\int_{\T} | \pi(\rho)|dx\leq  \left|\frac{c_p }{\gamma-1}\right|\int (\rho_0+1) dx
\ee
and there exists a positive constant $C=C(\gamma, \alpha, c_p, c_\mu)$ such that
\bq\label{bound:rhou2}
\begin{aligned}
& \Vert \rho u^2\Vert_{L^\infty(0, T; L^1)}+\Vert \mu(\rho)|\p_x\rho|^2\Vert_{L^1(0, T; L^1)}\\
&\le \left(\Vert\rho_0 u_0^2\Vert_{L^1(\T)}+C\big(1+\Vert f\Vert_{L^2(0, T; L^\infty)}^2\big)\big(1+\Vert \rho_0\Vert_{L^1(\T)}\big)\right)\exp(T).
 \end{aligned}
 \eq
 \end{lemm}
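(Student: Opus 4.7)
The plan is to combine the standard energy balance of Lemma \ref{energy:0} with the Bresch-Desjardins entropy identity and close each estimate by Gronwall, using mass conservation \eqref{massConsbnd} throughout to replace $\|\rho(\cdot,t)\|_{L^1}$ by the conserved quantity $\|\rho_0\|_{L^1}$.

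For Part 1 ($\gamma>1$, $c_p>0$), formula \eqref{formula:pi} with $\bar\rho=0$ yields $\pi(\rho)=\frac{c_p}{\gamma-1}\rho^\gamma\geq 0$, so $e\geq \tfrac12\rho u^2$ and the viscous term in \eqref{energyBalance} already has the right sign. I estimate the forcing by Cauchy-Schwarz and Young:
\[
\left|\int_\T f\rho u\,dx\right| \leq \|f\|_{L^\infty}\|\rho_0\|_{L^1}^{1/2}\|\rho u^2\|_{L^1}^{1/2} \leq \tfrac12\|f\|_{L^\infty}^2\|\rho_0\|_{L^1} + \|e\|_{L^1},
\]
drop the nonnegative viscous contribution from the left, and invoke Gronwall to obtain the $\|e\|_{L^\infty(0,T;L^1)}$ bound. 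The companion control on $\|\mu(\rho)|\partial_x\rho|^2\|_{L^1(0,T;L^1)}$ follows from the Bresch-Desjardins identity applied to the effective velocity $v=u+\partial_x\varphi(\rho)$ (for the natural $\varphi$ associated to $\mu$): since $c_p>0$ the BD dissipation has the correct sign, and the extra source $\int\rho fv\,dx$ as well as the cross term $\rho u\,\partial_x\varphi(\rho)$ hidden in $\tfrac12\rho v^2$ are absorbed by the same Cauchy-Schwarz/Young/Gronwall loop.

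For Part 2 ($\gamma\in(0,1)$, $c_p\neq 0$), $\pi$ is no longer sign-definite: $\pi(\rho)=\frac{c_p}{\gamma-1}\rho^\gamma$, and since $\rho^\gamma\leq\rho+1$ pointwise for $\gamma\in(0,1)$, mass conservation immediately yields \eqref{bound:pi:neg}. For \eqref{bound:rhou2} I integrate the energy balance in time and move $\pi(\rho(t))$ to the right-hand side,
\[
\tfrac12\int_\T\rho u^2(t)\,dx \leq \tfrac12\|\rho_0 u_0^2\|_{L^1} + \int_\T|\pi(\rho_0)|\,dx + \int_\T|\pi(\rho(t))|\,dx + \int_0^t\!\!\int_\T f\rho u\,dx\,ds.
\]
Both $\pi$ terms are dominated via \eqref{bound:pi:neg}, the forcing is treated exactly as in Part~1, and Gronwall closes the $\|\rho u^2\|_{L^\infty(0,T;L^1)}$ bound. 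The $\mu(\rho)|\partial_x\rho|^2$ piece comes once again from the BD identity, now chained with the $\pi$-estimate.

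The main technical subtlety is the Bresch-Desjardins step in Part 2: because $c_p<0$, the natural BD dissipation $\int p'(\rho)\mu'(\rho)\rho^{-1}|\partial_x\rho|^2\,dx$ has the wrong sign, so one must trade it against the a priori bound \eqref{bound:pi:neg} and the parameter constraints ($\tfrac12<\alpha\leq\tfrac32$, $0<\gamma\leq\alpha$) to still extract the claimed coercive control of $\mu(\rho)|\partial_x\rho|^2$. All remaining manipulations are routine combinations of Cauchy-Schwarz, Young's inequality, mass conservation, and Gronwall.
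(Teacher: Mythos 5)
Your treatment of the $\|e\|_{L^\infty L^1}$ bound in Part~1 and of \eqref{bound:pi:neg}, \eqref{bound:rhou2} in Part~2 is essentially the paper's argument: bound $\int f\rho u$ by Young's inequality and mass conservation, then Gr\"onwall; and for Part~2, move $\pi(\rho(t))$ to the right-hand side and absorb it via $\rho^\gamma\leq 1+\rho$. So far so good.

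The genuine gap is in the dissipation estimate. You attribute the $\Vert \mu(\rho)|\p_x(\cdot)|^2\Vert_{L^1(0,T;L^1)}$ bound to the Bresch--Desjardins inequality. That is not what the paper does, and more importantly it cannot work. First, reading the proof shows that this term is obtained simply by integrating the energy balance \eqref{energyBalance} in time \emph{without} discarding the viscous term, using the already-derived $L^\infty_t L^1_x$ bounds on $e$ (or on $\rho u^2$ and $|\pi|$ in Part 2) and the forcing estimate; no entropy is needed. (In fact the quantity appearing in \eqref{energyBalance} is $\mu(\rho)|\p_x u|^2$, and this is what enters \eqref{enCont2}; the $\p_x\rho$ written in the lemma and its proof is a misprint for $\p_x u$.) Second, even if you wanted to use BD here, the entropy dissipation in \eqref{entropyBalance} is $\int |\p_x\rho|^2\,\mu(\rho)p'(\rho)/\rho^2\,dx\sim\int\rho^{\alpha+\gamma-3}|\p_x\rho|^2\,dx$, which carries the weight $\rho^{\alpha+\gamma-3}$, not $\rho^\alpha$; these are not comparable without invoking bounds on $\rho$ away from $0$ and $\infty$, which would change the constants on the right of \eqref{Picont1}--\eqref{bound:rhou2}. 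So the BD route produces a different (and here irrelevant) quantity, and the claim that it delivers the stated control is unjustified.

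A secondary error: in Part 2 you argue ``because $c_p<0$ the natural BD dissipation has the wrong sign'' and appeal to the constraints $\tfrac12<\alpha\leq\tfrac32$, $0<\gamma\leq\alpha$. But Part~2 of the lemma assumes only $c_p\neq 0$ and imposes no restriction on $\alpha$ or $\gamma$ beyond $\gamma\in(0,1)$; you are importing the hypotheses of Theorem~\ref{theo:cont}(ii) into a lemma that does not make them. This is another indication that the BD step you describe is not the intended argument: the estimate \eqref{bound:rhou2} and its dissipation companion are independent of the sign of $c_p$ precisely because they come only from the energy balance, where the sign of the viscous term is unambiguous.
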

\begin{proof}
First, using the mass conservation \eqref{massConsbnd} we bound
\bq\label{estf:energy}
\begin{aligned}
\int_\T f\rho u\de x&\le \mez\int_\T f^2\rho+\int_\T \mez \rho u^2\\
&\le \Vert f\Vert_{L^\infty(\T)}^2\int_\T \rho+\int_\T \mez\rho u^2\\
&\le \Vert f\Vert_{L^\infty(\T)}^2\Vert\rho_0\Vert_{L^1(\T)}+\int_\T \mez\rho u^2.
\end{aligned}
\eq
1. If $\gamma\in(1,\infty)$ and $c_p >0$, then we have $\pi(\rho)\geq0$. It then follows  from \eqref{estf:energy} that
\bq\label{estf:energy1} 
\int_\T f\rho u\de x\le \Vert f\Vert_{L^\infty(\T)}^2\Vert \rho_0\Vert_{L^1(\T)}+\int_\T e(x, t)\de x.
\eq
 Ignoring the  first term on the right hand side of \eqref{energyBalance}, then using \eqref{estf:energy1} and Gr\"onwall's lemma we obtain
\bq\label{bound:e}
\|e\|_{L^\infty (0,T;L^1)} \leq \left(\Vert e(\cdot, 0)\Vert_{L^1}+\Vert f\Vert_{L^2(0, T; L^\infty)}^2\Vert \rho_0\Vert_{L^1(\T)}\right)\exp(T).
\eq
Next, we integrate \eqref{energyBalance} in time and use \eqref{estf:energy1}, \eqref{bound:e} together with the fact that $e(x, t)\ge 0$ to get
\begin{align*}
\Vert \mu(\rho)|\p_x\rho|^2\Vert_{L^1(0, T; L^1)}&\le \Vert e(\cdot, 0)\Vert_{L^1}+\Vert f\Vert_{L^2(0, T; L^\infty)}^2\Vert \rho_0\Vert_{L^1(\T)}+T\|e\|_{L^\infty (0,T;L^1)} \\
&\le \left(\Vert e(\cdot, 0)\Vert_{L^1}+\Vert f\Vert_{L^2(0, T; L^\infty)}^2\Vert \rho_0\Vert_{L^1(\T)}\right)(1+T)\exp(T)\\
&\le \left(\Vert e(\cdot, 0)\Vert_{L^1}+\Vert f\Vert_{L^2(0, T; L^\infty)}^2\Vert \rho_0\Vert_{L^1(\T)}\right)\exp(2T).
\end{align*}
2. If $\gamma\in (0,1)$ then
\bq\label{bound:pi:neg1}
\int_{\T} | \pi(\rho)|dx\leq  \left|\frac{c_p }{\gamma-1}\right|\int (\rho(t)+1) dx \leq  \left|\frac{c_p }{\gamma-1}\right|\int (\rho_0+1) dx
\eq
where we used the fact that  $\rho^\gamma\leq \max\{1,\rho\}$ together with the mass conservation \eqref{eq:mass}. Ignoring the first term on the right hand side of \eqref{energyBalance}  and using \eqref{bound:pi:neg1}, \eqref{estf:energy} we find
\begin{align*}
 \int_\T  \mez\rho u^2(x, t) \de x&\le\int_\T \mez\rho_0 u_0^2\de x+\int_\T \pi(\rho_0(x))\de x-\int_\T \pi(\rho(x, t))\de x+\int_0^t\int_\T f\rho u(x, s)\de x\de s\\
&\le \int_\T\mez \rho_0 u_0^2\de x+C(\Vert\rho_0\Vert_{L^1(\T)}+1)+ \Vert f(t)\Vert_{L^\infty(\T)}^2\Vert\rho_0\Vert_{L^1(\T)}+\int_0^t\int_\T \mez\rho u^2(x, s)\de x\de s
\end{align*}
for some positive constant $C=C(\gamma, \alpha, c_p, c_\mu)$.
Gr\"onwall's lemma then yields
\bq\label{rhou2:neg}
 \Vert \rho u^2\Vert_{L^\infty(0, T; L^1)}\le \left(\Vert\rho_0 u_0^2\Vert_{L^1(\T)}+C\big(1+\Vert f\Vert_{L^2(0, T; L^\infty)}^2\big)\big(1+\Vert \rho_0\Vert_{L^1(\T)}\big)\right)\exp(T).
 \eq
Again, we integrate \eqref{energyBalance} in time and use \eqref{estf:energy}, \eqref{rhou2:neg}, \eqref{bound:pi:neg1} to arrive at
\[
\Vert \mu(\rho)|\p_x\rho|^2\Vert_{L^1(0, T; L^1)}\le \left(\Vert\rho_0 u_0^2\Vert_{L^1(\T)}+C\big(1+\Vert f\Vert_{L^2(0, T; L^\infty)}^2\big)\big(1+\Vert \rho_0\Vert_{L^1(\T)}\big)\right)\exp(2T).
 \]
\end{proof}
If either  $\gamma\in (1,\infty)$ and $c_p >0$ or $\gamma\in (0,1)$ and $c_p \neq0$,  it follows from \eqref{formula:pi}-\eqref{bound:rhou2} that 
\begin{align} \label{enCont1}
\|\sqrt{\rho} u\|_{L^\infty (0,T;L^2)}  &\leq  M(E_0, \Vert f\Vert_{L^2(0, T; L^\infty)}, T) ,\\ \label{enCont2}
\|\rho^{\frac{\alpha}{2}} \partial_x u \|_{L^2 (0,T;L^2)}& \le M(E_0, \Vert f\Vert_{L^2(0, T; L^\infty)}, T),\\
\|\rho\|_{L^\infty (0,T;L^{\max\{1,\gamma\}})}&\leq  M(E_0, \Vert f\Vert_{L^2(0, T; L^\infty)}, T)\label{enCond3}
\end{align}
where
\bq
E_0:=\|\rho_0 u^2_0\|_{L^1(\T)}+\Vert \rho_0^\gamma\Vert_{L^1(\T)}+\Vert \rho_0\Vert_{L^1(\T)}.
\eq

\begin{lemm}[Bresch-Desjardins's Entropy \cite{Bresch2003}]\label{entropy:0}
Let 
\be
s:= \frac{\rho}{2} \left|u + \frac{\partial_x \rho}{\rho^2} \mu(\rho)\right|^2 + \pi (\rho).
\ee
 Then, the balance
\begin{equation} \label{entropyBalance}
	\frac{\de}{\de t}\int_{\T}  s(x,t) \de x  = -\int_{\T} |\partial_x \rho|^2 \mu(\rho) \frac{p'(\rho)}{\rho^2} dx+\int_\T  f\rho \big(u + \frac{\partial_x \rho}{\rho^2}\mu(\rho)\big)\de x\end{equation}
holds for any $t\in [0, T^*)$.
\end{lemm}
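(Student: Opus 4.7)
The plan is to follow the classical Bresch--Desjardins computation. Introduce the antiderivative $\varphi(\rho)$ defined by $\varphi'(\rho) := \mu(\rho)/\rho^2$, so that $\partial_x \varphi(\rho) = \mu(\rho)\partial_x \rho/\rho^2$ and the ``BD velocity'' $v := u + \partial_x \varphi(\rho)$ allows us to rewrite the integrand as $s = \tfrac12 \rho v^2 + \pi(\rho)$. The balance to be proved then takes the form
\[
\frac{\de}{\de t}\int_\T \Big(\tfrac12 \rho v^2 + \pi(\rho)\Big)\de x = -\int_\T \mu(\rho)\frac{p'(\rho)}{\rho^2}|\partial_x\rho|^2\de x + \int_\T f\rho v\,\de x.
\]

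The key step is to establish the identity
\[
\rho D_t v = -\partial_x p(\rho) + \rho f, \qquad D_t := \partial_t + u\partial_x,
\]
in which the degenerate viscous term miraculously disappears from the evolution of $v$. To see this, I would first use the continuity equation to derive $D_t \varphi(\rho) = -(\mu(\rho)/\rho)\partial_x u$, and then apply the commutator identity $D_t\partial_x g = \partial_x D_t g - \partial_x u \cdot \partial_x g$ with $g = \varphi(\rho)$ to obtain an equation for $\rho D_t \partial_x \varphi(\rho)$. Adding this to the rewritten momentum equation $\rho D_t u = -\partial_x p + \partial_x(\mu(\rho)\partial_x u) + \rho f$, the viscous pieces cancel by a direct calculation that uses only the relation $\varphi'(\rho)\rho^2 = \mu(\rho)$.

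Once the cancellation is in hand, I would multiply by $v$ and integrate in $x$. The continuity equation ensures that the transport contributions drop out, so that $\int \rho v\cdot D_t v\,\de x = \tfrac12 \frac{\de}{\de t}\int \rho v^2 \de x$. Integrating $-\int v\partial_x p\,\de x$ by parts and decomposing $\partial_x v = \partial_x u + \partial_x^2 \varphi(\rho)$ produces the contribution $\int p\,\partial_x u\,\de x$ together with the dissipation term $-\int \mu(\rho) p'(\rho)/\rho^2 |\partial_x\rho|^2\,\de x$ after a second integration by parts on the $\varphi(\rho)$ piece. The former is exactly cancelled by $\frac{\de}{\de t}\int \pi(\rho)\,\de x = -\int p\,\partial_x u\,\de x$ (using $\pi''(\rho) = p'(\rho)/\rho$ and the continuity equation), yielding the claimed balance. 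The main obstacle is verifying the viscous cancellation in the derivation of $\rho D_t v = -\partial_x p + \rho f$; the remaining steps are routine integrations by parts, justified by the regularity assumed in \eqref{class} and the positivity of $\rho$ guaranteed by \eqref{c0}.
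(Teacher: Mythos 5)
Your proposal is correct and follows essentially the same route as the paper: define the Bresch--Desjardins velocity $v = u + \partial_x\varphi(\rho)$ with $\varphi'(\rho)=\mu(\rho)/\rho^2$, verify the viscous cancellation so that $\rho D_t v = -\partial_x p + \rho f$, multiply by $v$, integrate in $x$, and close the pressure term using $\pi''(\rho)=p'(\rho)/\rho$. The only cosmetic difference is that the paper phrases the computation in conservative form $\partial_t(\rho X)$ and $\partial_t(\rho X^2)$, whereas you use the Lagrangian derivative $D_t$; these are equivalent via the continuity equation.
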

A proof of Lemma \ref{entropy:0} can be found in \cite{Bresch2003, Bresch2003b, BreschReview} and is given for completeness in the appendix. The first term on the right hand side of \eqref{entropyBalance} is negative whenever $c_p >0$ and positive whenever $c_p <0$.

\begin{lemm}\label{PiCont:1}
Define 
\bq
E_1:=E_0+\Vert \p_x(\rho_0^{\alpha-\mez})\Vert_{L^2(\T)}.
\eq
1. If $c_p >0$ and $\gamma\neq 1,\   \gamma\geq \alpha-\mez, \ \alpha> \mez$, then
\be\label{rhoBnd1}
\|\rho\|_{L^\infty(0,T;L^\infty)} \leq M(E_1, \Vert f\Vert_{L^2(0, T; L^\infty)}, T).
\ee
2.  If $c_p <0$ and  $0<\gamma\leq \alpha$, $\gamma<1$,  $\alpha\in (\mez, \tdm]$, then
\be\label{rhoBnd2}
\|\rho\|_{L^\infty(0,T;L^\infty)}  \leq M(E_1, \Vert f\Vert_{L^2(0, T; L^\infty)}, \frac{1}{ \cl }, T).
\ee
3.  Under the conditions of 1. or 2., we have
\be\label{rhoBnd3}
 \|\partial_x \rho\|_{L^\infty(0,T;L^2)}  \leq  M(E_1, \Vert f\Vert_{L^2(0, T; L^\infty)}, \frac{1}{ \cl }, T).
\ee

4. If $c_p>0$, $\gamma>1$ and $\alpha\ge 0$ then \eqref{rhoBnd2} and \eqref{rhoBnd3} hold.
\end{lemm}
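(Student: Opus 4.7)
The starting point is that the Bresch--Desjardins density in Lemma \ref{entropy:0} factors as
\[
s(x,t) = \frac{1}{2}\Bigl(\sqrt{\rho}\,u + \tfrac{c_\mu}{\alpha-1/2}\,\partial_x\rho^{\alpha-1/2}\Bigr)^{\!2} + \pi(\rho),
\]
using $\partial_x\rho\cdot\mu(\rho)/\rho^2 = c_\mu\rho^{\alpha-2}\partial_x\rho$ together with the chain rule (the coefficient $\alpha-1/2$ is nonzero in both regimes since $\alpha>1/2$). The elementary inequality $|B|^2\le 2|A+B|^2+2|A|^2$ applied with $A=\sqrt{\rho}u$ and $B=\tfrac{c_\mu}{\alpha-1/2}\partial_x\rho^{\alpha-1/2}$, combined with Lemma \ref{PiCont:0} (which bounds $\int \rho u^2$ and $|\int\pi(\rho)|$), yields the converse control
\[
\|\partial_x\rho^{\alpha-1/2}\|_{L^2}^2 \le C_1\int s(\cdot,t)\,dx + C_2,
\]
with $C_1,C_2$ depending only on $E_0$, $\|f\|_{L^2(0,T;L^\infty)}$, and $T$. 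The problem thus reduces to bounding $\int s(\cdot,t)\,dx$ via Gr\"onwall applied to the balance \eqref{entropyBalance}.

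\emph{Case 1.} With $c_p>0$ the dissipation in \eqref{entropyBalance} is nonnegative and may be dropped on the upper bound. We split the forcing as
\[
\int f\rho\Bigl(u+\tfrac{\partial_x\rho}{\rho^2}\mu(\rho)\Bigr)dx = \int f\rho u\,dx + \tfrac{c_\mu}{\alpha-1/2}\int f\sqrt{\rho}\,\partial_x\rho^{\alpha-1/2}\,dx,
\]
and bound each piece by Cauchy--Schwarz, mass conservation, and Young's inequality; substituting the converse control above yields $|F(t)|\le C(1+\|f(t)\|_{L^\infty}^2)+C\!\int s(\cdot,t)\,dx$, so $\int s(\cdot,t)\le M(E_1)+C\|f\|_{L^2(0,t;L^\infty)}^2+C\!\int_0^t\!\!\int s(\cdot,\tau)d\tau$ closes by Gr\"onwall. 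To pass to \eqref{rhoBnd1}, pick $x_0\in\T$ with $\rho(x_0)\le\|\rho_0\|_{L^1}$ by mass conservation; the one-dimensional fundamental theorem of calculus gives
\[
\|\rho^{\alpha-1/2}\|_{L^\infty}\le\|\rho_0\|_{L^1}^{\alpha-1/2}+\|\partial_x\rho^{\alpha-1/2}\|_{L^2},
\]
and raising to the power $1/(\alpha-1/2)$, valid since $\alpha>1/2$, produces the $L^\infty$ bound on $\rho$.

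\emph{Case 2 and \eqref{rhoBnd3}.} For $c_p<0$ the dissipation becomes a \emph{source}; using $\gamma\le\alpha$ and $\rho\ge\underline\rho$ we estimate
\[
|c_p|c_\mu\gamma\!\int\rho^{\alpha+\gamma-3}|\partial_x\rho|^2\,dx \le \frac{|c_p|c_\mu\gamma}{(\alpha-1/2)^2\,\underline{\rho}^{\alpha-\gamma}}\|\partial_x\rho^{\alpha-1/2}\|_{L^2}^2,
\]
which contributes an additional linear (in $\int s$) term to the Gr\"onwall inequality with coefficient depending on $1/\underline\rho$, explaining the extra dependence in \eqref{rhoBnd2}; note also $\pi(\rho)\ge 0$ in this regime, which simplifies the converse control. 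Finally, \eqref{rhoBnd3} follows from $\partial_x\rho=\tfrac{1}{\alpha-1/2}\rho^{3/2-\alpha}\partial_x\rho^{\alpha-1/2}$: the factor $\rho^{3/2-\alpha}$ is controlled by $\|\rho\|_{L^\infty}^{3/2-\alpha}$ when $\alpha\le 3/2$ and by $\underline\rho^{3/2-\alpha}$ when $\alpha>3/2$, which explains why \eqref{rhoBnd3}, unlike \eqref{rhoBnd1}, always carries the $1/\underline\rho$ dependence. The main technical point is to arrange the Young's inequalities in the forcing so that, after the converse control is inserted, the resulting integral inequality for $\int s$ is linear with bounded source, enabling Gr\"onwall to close in both cases.
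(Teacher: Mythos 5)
Your argument is correct and follows the same basic strategy as the paper: integrate the Bresch--Desjardins balance, close a Gr\"onwall inequality for $\int s$, and convert a bound on $\partial_x(\rho^{\alpha-1/2})$ into an $L^\infty$ bound on $\rho$. The difference is in the last step. The paper establishes $\|\rho^{\alpha-1/2}\|_{L^\infty(0,T;L^1)}$ via the hypothesis $\gamma\geq\alpha-1/2$ (comparing $\rho^{\alpha-1/2}$ to $\rho^\gamma+\rho$ and using the energy bound \eqref{enCond3}), then invokes Nash's inequality to bound $\|\rho^{\alpha-1/2}\|_{H^1}$, and finally Sobolev embedding. You instead pick the mean-value point $x_0$ (where $\rho(x_0)=\int\rho=\|\rho_0\|_{L^1}$) and run the fundamental theorem of calculus for $\rho^{\alpha-1/2}$, raising to the power $1/(\alpha-1/2)$ at the end. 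Your route is more elementary, avoids Nash, and in fact does not use $\gamma\geq\alpha-1/2$ in that particular step (it is still needed in the lemma's hypotheses for consistency with the rest of the paper, but your $L^\infty$ conversion is hypothesis-free modulo $\alpha>1/2$). The ``converse control'' $\|\partial_x\rho^{\alpha-1/2}\|_{L^2}^2\le C_1\int s + C_2$, with $C_1,C_2$ coming from Lemma~\ref{PiCont:0}, is also present in the paper, just not isolated as a named step; making it explicit gives a cleaner modular structure, since it is used both to control the forcing in the entropy balance (after Cauchy--Schwarz and Young) and to extract the conclusion. Your estimate of the sign-changed dissipation in case~2 agrees with the paper's; your remark on why \eqref{rhoBnd3} carries the $1/\underline\rho$ dependence is slightly imprecise (in case~1 with $\alpha\le 3/2$ the $1/\underline\rho$ is not actually needed), but the lemma as stated is a weaker claim, so nothing is wrong.
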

\begin{rema}
The bound for \eqref{rhoBnd1} is independent of $ \cl $.  This fact will be important in the proof of Theorem~\ref{theo:global}.
\end{rema}
\begin{proof}
1. Since $c_p >0$, the first term on the right hand side of \eqref{entropyBalance}  is negative, and thus
\bq\label{bound:rhsentropy}
\begin{aligned}
\frac{\de}{\de t}\int_{\T}  s(x,t) \de x  &\le \int_\T  f\rho \big(u + \frac{\partial_x \rho}{\rho^2}\mu(\rho)\big)\de x\\
&\le\mez \int_\T  f^2\rho\de x+\int_\T  \mez\rho \big(u + \frac{\partial_x \rho}{\rho^2}\mu(\rho)\big)^2\de x\\
&\le\mez\Vert f(t)\Vert^2_{L^\infty(\T)}\Vert \rho_0\Vert_{L^1(\T)}+\int_\T  \mez\rho \big(u + \frac{\partial_x \rho}{\rho^2}\mu(\rho)\big)^2\de x.
\end{aligned}
\eq
When $\gamma>1$ we have $\pi(\rho)\ge 0$, hence $s>0$ and 
\[
\frac{\de}{\de t}\int_{\T}  s(x,t) \de x  \le\mez\Vert f(t)\Vert^2_{L^\infty(\T)}\Vert \rho_0\Vert_{L^1(\T)}+\int_\T s(x, t)\de x.
\]
Gr\"onwall's lemma then yields
\be\label{sGron}
\Vert s\Vert_{L^\infty(0, T; L^1)}\le \left(\Vert s(0, \cdot)\Vert_{L^1(\T)}+\Vert f\Vert^2_{L^2(0, T; L^\infty)}\Vert \rho_0\Vert_{L^1(\T)}\right)\exp(T).
\ee
We combine \eqref{sGron} with \eqref{enCont1} and the fact that
\bq\label{bound:s0}
\Vert s(0,\cdot)\Vert_{L^1(\T)}\le \Vert \rho_0u_0^2\Vert_{L^1(\T)}+\Vert \p_x(\rho_0^{\alpha-\mez})\Vert_{L^2(\T)}^2.
\eq
 In view of \eqref{enCont2}, this implies
\be\label{entinqe1}
\|\partial_x (\rho^{\alpha-\frac{1}{2}} ) \|_{L^\infty(0,T;L^2(\T))} \leq M(E_1, \Vert f\Vert_{L^2(0, T; L^\infty)}, T)
\ee
with 
\[
E_1=E_0+\Vert \p_x(\rho_0^{\alpha-\mez})\Vert_{L^2(\T)}.
\]
On the other hand, when $\gamma\in (0, 1)$ we write 
\[
\frac{\de}{\de t}\int_{\T}  \mez\rho \big(u + \frac{\partial_x \rho}{\rho^2}\mu(\rho)\big)^2 \de x\le \frac{\de}{\de t}\int_{\T}\pi(\rho(x, t))\de x+\mez\Vert f(t)\Vert^2_{L^\infty(\T)}\Vert \rho_0\Vert_{L^1(\T)}+\int_\T  \mez\rho \big(u + \frac{\partial_x \rho}{\rho^2}\mu(\rho)\big)^2\de x
\]
where we recall from \eqref{bound:pi:neg}
\be
\int_{\T} | \pi(\rho)|dx\leq  \left|\frac{c_p }{\gamma-1}\right|\int (\rho_0+1) dx.
\ee
It follows from Gr\"onwall's lemma that
\begin{align*}
\sup_{t\in [0, T]}\int_{\T}  \mez\rho &\big(u + \frac{\partial_x \rho}{\rho^2}\mu(\rho)\big)^2(x, t)\de x\\
&\le \left(\int_{\T}  \mez\rho \big(u + \frac{\partial_x \rho}{\rho^2}\mu(\rho)\big)^2(x, 0)\de x+C(1+\Vert f\Vert^2_{L^2(0, T; L^\infty)}\big)\big(1+\Vert \rho_0\Vert_{L^1(\T)}\big)\right)\exp(T)\\
&\le M(E_1, \Vert f\Vert_{L^2(0, T; L^\infty)}, T).
\end{align*}
Combined with \eqref{enCont1}, this implies the bound \eqref{entinqe1} when $\gamma\in (0, 1)$. 
\\
Next, we recall from \eqref{enCond3} the bound for $\Vert \rho^\gamma\Vert_{L^1(\T)}$. By the assumption that $\gamma\geq \alpha-\frac{1}{2}$, we obtain
 \[
 \Vert \rho^{\alpha - \frac{1}{2}}\Vert_{L^\infty(0,T;L^1)}\le C(1+ \Vert \rho^\gamma\Vert_{L^\infty(0,T;L^1)}{+ \Vert \rho\Vert_{L^\infty(0,T;L^1)}})\le  M(E_0, \Vert f\Vert_{L^2(0, T; L^\infty)}, T).
 \]
This combined with \eqref{entinqe1} and Nash's inequality
$$
\| \rho^{\alpha-\frac{1}{2}}\|_{L^\infty(0,T; L^2)} \leq C \| \rho^{\alpha-\frac{1}{2}}\|_{L^\infty(0,T; L^1)}^{2/3} \| \partial_x (\rho^{\alpha-\frac{1}{2}})\|_{L^\infty(0,T; L^2)}^{1/3}+ C \Vert\rho^{\alpha - \frac 12}\Vert_{L^\infty(0, T; L^1)}
$$
 leads to
\[
\|\rho^{\alpha-\frac{1}{2}} \|_{L^\infty(0,T; H^1)} \leq M(E_1, \Vert f\Vert_{L^2(0, T; L^\infty)}, T).
\]
The stated bound \eqref{rhoBnd1} then follows by Sobolev embedding $H^1\subseteq L^\infty$.

\vskip 0.5cm
2. In this case, $c_p <0$ and thus the  first term on the  right hand side of \eqref{entropyBalance} is positive and is equal to
\begin{align*}
 -\gamma c_p c_\mu\int_{\T} | \rho^{(\gamma + \alpha-3)/2}\partial_x \rho |^2 \de x &\leq   -2\gamma \frac{c_p }{c_\mu}\int_{\T}\rho^{\gamma -\alpha+1}\left(  | u+c_\mu\rho^{\alpha-2} \partial_x \rho |^2  + |u|^2\right)\de x\\
 &=-2\gamma \frac{c_p }{c_\mu}\int_{\T}\rho^{\gamma -\alpha} \left(  s(x,t)  -   \pi(\rho)  + \rho|u|^2\right)\de x.
 \end{align*}
Note that  \eqref{bound:rhsentropy} provides the bound
 \[
 \int_\T  f\rho \big(u + \frac{\partial_x \rho}{\rho^2}\mu(\rho)\big)\de x \leq \mez\Vert f(t)\Vert^2_{L^\infty(\T)}\Vert \rho_0\Vert_{L^1(\T)}+\int_\T  \mez\rho \big(u + \frac{\partial_x \rho}{\rho^2}\mu(\rho)\big)^2\de x.
 \]
In addition, since $\gamma\in (0, 1)$, part 2 of Lemma \ref{PiCont:0} provides a bound for $\pi(\rho)$ and $\rho u^2$. Moreover, note that when $c_p<0$ and $\gamma\in (0, 1)$ we have $\pi(\rho), s\ge 0$. Using these together with the assumption that $\gamma\le \alpha$ we have
\begin{align*}
	\frac{\de}{\de t}\int_{\T} s(x,t) \de x &\leq -2\gamma \frac{c_p }{c_\mu}\int_{\T}\rho^{\gamma -\alpha} \left(s(x,t)  -   \pi(\rho)  + \rho|u|^2\right)dx+\Vert f(t)\Vert^2_{L^\infty(\T)}\Vert \rho_0\Vert_{L^1(\T)}+\int_\T s(x, t)\de x.
\\ \nonumber
	&\leq -2\gamma \frac{c_p }{c_\mu}(\frac{1}{ \cl })^{\gamma -\alpha} \int_{\T} \left(s(x,t)  -   \pi(\rho)  + \rho|u|^2\right)\de x+\Vert f(t)\Vert^2_{L^\infty(\T)}\Vert \rho_0\Vert_{L^1(\T)}+\int_\T s(x, t)\de x.
\\ \nonumber
&\leq \Big(-2\gamma \frac{c_p }{c_\mu} (\frac{1}{ \cl })^{\gamma -\alpha} +1\Big)\int_{\T} s(x,t) \de x-2\gamma \frac{c_p }{c_\mu}(\frac{1}{ \cl })^{\gamma -\alpha} \int_{\T} \left(-\pi(\rho)  + \rho|u|^2\right)\de x\\
&\quad  +\Vert f(t)\Vert^2_{L^\infty(\T)}\Vert \rho_0\Vert_{L^1(\T)}\\
&\le \Big(-2\gamma \frac{c_p }{c_\mu} (\frac{1}{ \cl })^{\gamma -\alpha} +1\Big)\int_\T s(x, t)\de x+M(E_0, \Vert f\Vert_{L^2(0, T; L^\infty)}, \frac{1}{ \cl }, T)\\
&\quad+\Vert f(t)\Vert^2_{L^\infty(\T)}\Vert \rho_0\Vert_{L^1(\T)}
\end{align*}
for $t\le T$.  By Gr\"{o}nwall's lemma and \eqref{bound:s0}, we deduce that
\begin{align*}
\Vert s \Vert_{L^\infty(0, T; L^1)} &\le M(E_0+\Vert s(\cdot, 0)\Vert_{L^1(\T)}, \Vert f\Vert_{L^2(0, T; L^\infty)},{\frac{1}{ \cl }}, T)\\
& \le M(E_1, \Vert f\Vert_{L^2(0, T; L^\infty)}, \frac{1}{ \cl }, T).
\end{align*}
Combining this {with \eqref{enCont1}} gives
\be\label{otherrhobnd2}
\|\partial_x (\rho^{\alpha-\frac{1}{2}} ) \|_{L^\infty(0,T;L^2)} \leq M(E_1, \Vert f\Vert_{L^2(0, T; L^\infty)}, \frac{1}{ \cl }, T).
\ee
Since $\alpha-\frac{1}{2}\in(0, 1]$,  the mass conservation \eqref{enCond3} implies
\be
\Vert \rho^{\alpha - \frac{1}{2}}\Vert_{L^\infty(0,T;L^1)}\le C(1+\Vert \rho_0\Vert_{L^1(\T)}).
\ee
Combined with \eqref{otherrhobnd2}, this yields
\[
\Vert \rho^{\alpha - \frac{1}{2}} \Vert_{L^\infty(0,T; H^1)}\le M(E_1, \Vert f\Vert_{L^2(0, T; L^\infty)}, \frac{1}{ \cl }, T)
\]
from which \eqref{rhoBnd2} follows.
\\

\vspace{-4mm}
3. The bound \eqref{rhoBnd3} follows from \eqref{rhoBnd1} \& \eqref{entinqe1} and \eqref{rhoBnd2} \& \eqref{otherrhobnd2} respectively.

4. This follows from Propositions 4.5 and 4.6 in \cite{MV06}.
\end{proof}

\section{The active potential} \label{sec:goodUnknown}

We introduce in this section the \emph{active potential} $w:=- p(\rho) +\mu(\rho) \partial_xu$. This is a good unknown upon which much of the analysis is based. We first show that $w$ satisfies a {\it  forced quadratic heat equation with linear drift}.
\begin{prop}[$w$--equation] \label{LemwEqn} Let 
\be\label{wDef}
w:=- p(\rho) +\mu(\rho) \partial_xu.
\ee
Then $w$ satisfies
\begin{align} \nonumber
\partial_t w&=  \rho^{-1}\mu(\rho) \partial_x^2 w- (u+\mu(\rho)\frac{\partial_x \rho}{\rho^2} )\partial_xw+   \left(\rho \frac{p'(\rho)}{\mu(\rho)}-2 \frac{(\rho\mu'(\rho)+\mu(\rho))}{\mu(\rho)^2} p(\rho)\right)w\\
&\quad -\frac{(\rho\mu'(\rho)+\mu(\rho))}{\mu(\rho)^2}  w^2+ \left(  \rho\frac{p'(\rho)}{\mu(\rho)}-\frac{(\rho\mu'(\rho)+\mu(\rho))}{\mu(\rho)^2}  p(\rho)\right)p(\rho)+\mu(\rho)\p_xf.  \label{Weqn}
\end{align}
Moreover, the following balance holds
\bq\label{dt:w:L2}
\begin{aligned}
	\frac{\de}{\de t}\int_{\T}  \frac{1}{2} |w|^2(x,t)\de x  &= {-\int_{\T}  \rho^{-1}\mu(\rho) |\partial_x w|^2 dx- \int_{\T}\left(u+\frac{\mu'(\rho)}{\rho} \partial_x \rho \right) w \partial_x wdx }\\ 
	&\qquad  +\int_{\T}  \left(\rho \frac{p'(\rho)}{\mu(\rho)}-2\frac{(\rho\mu'(\rho)+\mu(\rho))}{\mu(\rho)^2} p(\rho)\right)|w|^2 dx-\int_{\T} \frac{(\rho\mu'(\rho)+\mu(\rho))}{\mu(\rho)^2}  w^3 d x\\
	&\qquad + \int_{\T} \left(\rho \frac{p'(\rho)}{\mu(\rho)}-\frac{(\rho\mu'(\rho)+\mu(\rho))}{\mu(\rho)^2} p(\rho)\right)p(\rho) wdx+\int_\T \mu(\rho)\p_xfwdx.
\end{aligned}
\eq
\end{prop}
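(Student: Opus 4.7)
The strategy for \eqref{Weqn} is direct substitution: since $w=-p(\rho)+\mu(\rho)\partial_x u$, I differentiate in time and use \eqref{eq:mass}--\eqref{eq:mom} to eliminate $\partial_t\rho$ and $\partial_t u$. Rewriting the momentum equation in nonconservative form using $\partial_x w = -\partial_x p(\rho) + \partial_x(\mu(\rho)\partial_x u)$ gives $\rho(\partial_t u + u\partial_x u) = \partial_x w + \rho f$, so
\[
\partial_t u = -u\partial_x u + \rho^{-1}\partial_x w + f,\qquad \partial_t\rho = -u\partial_x\rho - \rho\partial_x u.
\]
The key algebraic identity in converting the resulting expression to the form \eqref{Weqn} is the pointwise relation $\partial_x u = \mu(\rho)^{-1}(w+p(\rho))$, which lets me replace every remaining occurrence of $\partial_x u$ by polynomials in $w$ and $p(\rho)$ divided by powers of $\mu(\rho)$.

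Concretely, I would compute $\partial_t w = -p'(\rho)\partial_t\rho + \mu'(\rho)(\partial_x u)\partial_t\rho + \mu(\rho)\partial_x\partial_t u$ and insert the two substitutions above. The $\mu(\rho)\partial_x\partial_t u$ piece produces the diffusive part $\mu(\rho)\partial_x(\rho^{-1}\partial_x w)=\rho^{-1}\mu(\rho)\partial_x^2 w - \mu(\rho)\rho^{-2}\partial_x\rho\,\partial_x w$, the forcing $\mu(\rho)\partial_x f$, and a $-\mu(\rho)\partial_x(u\partial_x u)$ contribution. After gathering terms, the $u$-dependent pieces cancel against contributions from the $\partial_t\rho$ substitution, leaving the single transport coefficient $-(u+\mu(\rho)\rho^{-2}\partial_x\rho)$ in front of $\partial_x w$. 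The remaining terms are $\rho p'(\rho)\partial_x u$ and $-(\rho\mu'(\rho)+\mu(\rho))(\partial_x u)^2$; expanding the square via $\mu(\rho)\partial_x u=w+p(\rho)$ yields
\[
-(\rho\mu'(\rho)+\mu(\rho))(\partial_x u)^2 = -\frac{\rho\mu'(\rho)+\mu(\rho)}{\mu(\rho)^2}\bigl(w^2+2p(\rho)w+p(\rho)^2\bigr),
\]
and combining with $\rho p'(\rho)\partial_x u = \rho p'(\rho)\mu(\rho)^{-1}(w+p(\rho))$ produces exactly the linear, quadratic and source terms in $w$ listed in \eqref{Weqn}.

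The balance \eqref{dt:w:L2} follows by multiplying \eqref{Weqn} by $w$ and integrating over $\T$. The only step requiring attention is the principal part: integration by parts gives
\[
\int_{\T}\rho^{-1}\mu(\rho)(\partial_x^2 w)w\,dx = -\int_\T\rho^{-1}\mu(\rho)|\partial_x w|^2 dx - \int_\T w\,\partial_x w\cdot\partial_x\bigl(\rho^{-1}\mu(\rho)\bigr)\,dx,
\]
and since $\partial_x(\rho^{-1}\mu(\rho)) = \rho^{-2}(\rho\mu'(\rho)-\mu(\rho))\partial_x\rho$, this error term combines with the contribution $-\mu(\rho)\rho^{-2}\partial_x\rho\cdot w\partial_x w$ coming from the transport coefficient to collapse into the single factor $-(u+\mu'(\rho)\rho^{-1}\partial_x\rho)$ multiplying $w\partial_x w$, as stated. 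All other terms in \eqref{Weqn} are integrated without modification.

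The main \emph{obstacle} here is purely bookkeeping: the simplification that expresses everything in terms of $w$, $\partial_x w$ and $p(\rho)$ involves several competing contributions whose signs and powers of $\mu(\rho)$ and $\rho$ must be tracked carefully, particularly in expanding $(w+p(\rho))^2$ and in matching the combined advection coefficient in the $L^2$ balance. There is, however, no analytical difficulty since $\rho$ is bounded below by \eqref{c0} and the regularity \eqref{class} makes all manipulations legitimate pointwise.
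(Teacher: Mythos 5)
Your proposal is correct and follows essentially the same route as the paper: differentiate $w$ in time, substitute the nonconservative forms of the mass and momentum equations, use the pointwise identity $\mu(\rho)\partial_x u = w + p(\rho)$ to convert everything to polynomials in $w$, and for the $L^2$ balance integrate by parts once and merge the resulting $\partial_x(\mu(\rho)/\rho)$ term with the drift contribution to produce the single coefficient $-(u+\mu'(\rho)\rho^{-1}\partial_x\rho)$. The paper performs the same steps, though in the intermediate display it keeps the term $-\partial_x(\mu(\rho)/\rho)\,w\partial_x w$ separate before the final simplification; your explicit computation $\partial_x(\rho^{-1}\mu(\rho))=\rho^{-2}(\rho\mu'(\rho)-\mu(\rho))\partial_x\rho$ and the ensuing cancellation is exactly what makes the stated balance come out.
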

\begin{proof}
From the definition of $w:=- p(\rho) +\mu(\rho) \partial_xu$ given by \eqref{wDef}, we compute
\begin{align}
\partial_x w &= (\partial_x \rho)  (-p'(\rho) +\mu'(\rho) \partial_x u) + \mu(\rho) \partial_x^2 u.
\end{align}
Thus, we have
\begin{align}\nonumber
\partial_t w &= (\partial_t \rho)  (-p'(\rho) +\mu'(\rho) \partial_x u) + \mu(\rho) \partial_t \partial_x u\\ \nonumber
&= - \partial_x (u \rho )  (-p'(\rho) +\mu'(\rho) \partial_x u) +\mu(\rho) \partial_t \partial_x u\\
&= -\rho \partial_x u   (-p'(\rho) +\mu'(\rho) \partial_x u) -  u(\partial_xw- \mu(\rho) \partial_x^2 u)+ \mu(\rho) \partial_t \partial_x u.
\end{align}
The momentum equation \eqref{eq:mom} gives
\begin{align*}
\partial_t u &= - u\partial_x u + \rho^{-1} \partial_x w {+f},\\
\partial_t \partial_x u &=-\partial_xu\partial_x u- u\partial_x^2u  - \frac{\partial_x \rho}{\rho^2} \partial_x w+ \rho^{-1} \partial_x^2 w+\p_xf.
\end{align*}
Combining the above results, we find
\begin{align*}
\partial_t w&= -\rho \partial_x u   (-p'(\rho) +\mu'(\rho) \partial_x u) -  u\partial_xw + u \mu(\rho) \partial_x^2 u\\
&\quad  -\mu(\rho)(|\partial_xu|^2+ u\partial_x^2u)  - \mu(\rho)\frac{\partial_x \rho}{\rho^2} \partial_x w+ \rho^{-1}\mu(\rho) \partial_x^2 w+\mu(\rho)\p_xf\\
&=  \rho^{-1}\mu(\rho) \partial_x^2 w+  \rho (\partial_x u) p'(\rho) -(\rho\mu'(\rho)+\mu(\rho))   |\partial_x u|^2 -  (u+\mu(\rho)\frac{\partial_x \rho}{\rho^2} )\partial_xw+\mu(\rho)\p_xf\\
&=  \rho^{-1}\mu(\rho) \partial_x^2 w+  \rho (w+ p(\rho) ) \frac{p'(\rho)}{\mu(\rho)}-\frac{(\rho\mu'(\rho)+\mu(\rho))}{\mu(\rho)^2}    (w+ p(\rho))^2 -  (u+\mu(\rho)\frac{\partial_x \rho}{\rho^2} )\partial_xw+\mu(\rho)\p_xf
\end{align*}
which, after rearrangement, establishes Eq. \eqref{Weqn}.    For the energy, multiplying the equation \eqref{Weqn} by $w$ yields
\begin{align*}
\partial_t\left(\frac{1}{2} |w|^2\right)&={ \partial_x\big(\frac{\mu(\rho)}{\rho} w \partial_x w\big) - \frac{\mu(\rho)}{\rho} |\partial_x w|^2 -\p_x\big(\frac{\mu(\rho)}{\rho}\big)w\p_xw  -\left(u+\frac{\mu(\rho)}{\rho^2} \partial_x \rho \right) w \partial_x w }  \\
&\quad + \left(\rho \frac{p'(\rho)}{\mu(\rho)}-2\frac{(\rho\mu'(\rho)+\mu(\rho))}{\mu(\rho)^2} p(\rho)\right)|w|^2 -\frac{(\rho\mu'(\rho)+\mu(\rho))}{\mu(\rho)^2}  w^3 \\
&\quad + \left(\rho \frac{p'(\rho)}{\mu(\rho)}-\frac{(\rho\mu'(\rho)+\mu(\rho))}{\mu(\rho)^2} p(\rho)\right)p(\rho) w+ \mu(\rho)\p_xfw.
\end{align*}
Integrating in space yields the balance.
\end{proof}
Let us remark that in \eqref{Weqn}  the new viscosity coefficient is $\frac{\mu(\rho)}{\rho}$ which is less degenerate than the original viscosity $\mu(\rho)$ for the momentum equation. In particular, when $\mu(\rho)=c_\mu\rho^\alpha$ with $\alpha\le 1$, $\frac{\mu(\rho)}{\rho}$ is not degenerate when $\rho$ goes to $0$. Energy estimates for the coupled system of $\rho$ and $w$ will allow us to control all the high Sobolev regularity of $\rho$ and $w$ as long as $\rho$ is positive. This leads to the proof of our continuation criterion in Theorem \ref{theo:cont}: no singularity occurs before vacuum formation.

 Furthermore, \eqref{Weqn} can be regarded as a nonlinear heat equation with variable coefficients. Note that the zero-order term in \eqref{Weqn} has the form $\lambda \rho^{2\gamma-\alpha}$ where $\lambda$ depends only on $c_\mu$ and $c_p$.  It can be readily seen that when the zero-order term and the forcing term in \eqref{Weqn} are nonpositive,  $w$ remains nonpositive if it is nonpositive initially. This fact will be exploited as the key ingredient in proving the existence of global solutions in Theorem \ref{theo:global2} when the viscosity is strongly degenerate.
 

\section{Proof of Theorem \ref{theo:cont}}\label{sec:4}
Throughout this section,  we suppose that
\be\label{AssumpRho}
0< \cl  \leq \rho(x,t)  \qquad t\in [0,T^*),\quad x\in \T.
\ee
and assume any of the following three conditions\\

\vspace{-5mm}
(i)   $c_p >0$ and $\alpha> \frac{1}{2}$, $\gamma\geq \alpha-\frac{1}{2} $, $\gamma\neq 1$ \\
(ii)  $c_p <0$ and  $\alpha\in (\frac{1}{2}, \frac{3}{2}]$, $0<\gamma\leq \alpha$, $\gamma<1$  \\
\red{(iii) $c_p>0$ and $\alpha\ge 0$, $\gamma>1$.}

Under these assumptions, by Lemma \ref{PiCont:1}, we have
\be\label{rhobd1}
\|\rho\|_{L^\infty(0,T;L^\infty(\T))}  \leq M(E_1, \Vert f\Vert_{L^2(0, T; L^\infty)}, \frac{1}{ \cl }, T),
\ee
and
\be\label{rhobd2}
\|\partial_x \rho\|_{L^\infty(0,T;L^2(\T))}  \leq M(E_1, \Vert f\Vert_{L^2(0, T; L^\infty)}, \frac{1}{ \cl }, T).
\ee
\begin{lemm}\label{lemm:wL2}
	\bq\label{w:L2}
	\begin{aligned}
		&\Vert w\Vert_{L^\infty(0, T; L^2)}+\Vert \partial_x w\Vert_{L^2(0, T; L^2)}+\Vert \partial_x u \Vert_{L^\infty(0, T; L^2)}+\Vert \partial_x^2  u \Vert_{L^2(0, T; L^2)}\\
		& \hspace{70mm} \le M(E_2, \Vert f\Vert_{L^2(0, T; H^1)}, \frac{1}{ \cl }, T),
	\end{aligned}
	\eq
	where $E_2=E_1+ \Vert \partial_x u_0\Vert_{L^2}$.
\end{lemm}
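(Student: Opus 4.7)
My strategy is to derive the stated bounds for $w$ directly from the $L^2$-balance \eqref{dt:w:L2}, and then read off the bounds on $\partial_x u$ and $\partial_x^2 u$ from the algebraic identity $\mu(\rho)\partial_x u = w + p(\rho)$ and its $\partial_x$-derivative. Since $\rho$ is bounded above by Lemma~\ref{PiCont:1} and below by $\cl$, every $\rho$-coefficient in \eqref{dt:w:L2} is a bounded quantity controlled by $M(E_1,\Vert f\Vert_{L^2(0,T;L^\infty)},\cl^{-1},T)$, so only the $w$-dependent pieces require real work. Two preliminary facts will be used throughout: first, the energy identity \eqref{energyBalance} together with $\mu(\rho)\ge c_\mu\cl^\alpha$ gives $\partial_x u\in L^2((0,T)\times\T)$, and hence via $w=-p(\rho)+\mu(\rho)\partial_x u$ the a priori bound $\Vert w\Vert_{L^2((0,T)\times\T)}\le M_0$; second, the 1D Sobolev embedding $H^1(\T)\hookrightarrow L^\infty(\T)$ applied to $u$, combined with $u\in L^\infty_tL^2_x$ and $\partial_x u\in L^2_{t,x}$, gives $\Vert u\Vert_{L^\infty}\in L^2(0,T)$ with bound $M$.

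\textbf{Bounding the terms in \eqref{dt:w:L2}.} The diffusion contributes $-c_\star\Vert\partial_x w\Vert_{L^2}^2$ with $c_\star>0$ depending on $\cl^{-1}$. The $u$-piece of the drift is bounded by Young as $\eta\Vert\partial_x w\Vert_{L^2}^2+C_\eta\Vert u\Vert_{L^\infty}^2\Vert w\Vert_{L^2}^2$, time-integrable by the preliminary bound. The $\partial_x\rho$-piece is treated via $\Vert\partial_x\rho\Vert_{L^\infty_tL^2_x}\le M$ from Lemma~\ref{PiCont:1} and the Gagliardo--Nirenberg inequality $\Vert w\Vert_{L^\infty}^2\lesssim\Vert w\Vert_{L^2}\Vert\partial_x w\Vert_{L^2}+\Vert w\Vert_{L^2}^2$; after Young it becomes $\eta\Vert\partial_x w\Vert_{L^2}^2+C\Vert w\Vert_{L^2}^2$. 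The quadratic $\int C(\rho)w^2$, linear $\int C(\rho)w$, and forcing $\int\mu\partial_x f\cdot w$ pieces are routine Cauchy--Schwarz. The hard term is the cubic $-\int(\rho\mu'+\mu)\mu^{-2}w^3\,dx$; using $|w|^3\le\Vert w\Vert_{L^\infty}|w|^2$ and the same Gagliardo--Nirenberg,
\[
\Big|\int(\rho\mu'+\mu)\mu^{-2}w^3\,dx\Big|\le \eta\Vert\partial_x w\Vert_{L^2}^2+C_\eta\bigl(\Vert w\Vert_{L^2}^{10/3}+\Vert w\Vert_{L^2}^3\bigr).
\]

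\textbf{Closing the estimate and the main obstacle.} The superlinear power $10/3$ is the principal obstacle, as a naive differential Gr\"onwall would permit finite-time blow-up. To avoid this, I integrate over $[0,T]$, absorb the $\eta\Vert\partial_x w\Vert_{L^2}^2$ contributions into the dissipation, and invoke the a priori $L^2_{t,x}$ bound on $w$ as follows: setting $Y:=\sup_{t\le T}\Vert w\Vert_{L^2}^2$,
\[
\int_0^T\Vert w\Vert_{L^2}^{10/3}\,dt\le Y^{2/3}\int_0^T\Vert w\Vert_{L^2}^2\,dt\le Y^{2/3}M_0,\qquad \int_0^T\Vert w\Vert_{L^2}^3\,dt\le Y^{1/2}M_0.
\]
Combined with linear Gr\"onwall applied to the $A(t)\Vert w\Vert_{L^2}^2$ contributions (with $A\in L^1(0,T)$), this produces an implicit inequality of the form $Y\le C_1+C_2 Y^{2/3}$, from which one extracts $Y\le M(E_2,\Vert f\Vert_{L^2(0,T;H^1)},\cl^{-1},T)$; the same integrated inequality simultaneously bounds $\Vert\partial_x w\Vert_{L^2((0,T)\times\T)}$. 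The claimed bound on $\Vert\partial_x u\Vert_{L^\infty(0,T;L^2)}$ follows at once from $\partial_x u=(w+p(\rho))/\mu(\rho)$ and the $\rho$-bounds, and $\Vert\partial_x^2 u\Vert_{L^2(0,T;L^2)}$ follows by differentiating this relation and using $\partial_x w\in L^2_{t,x}$, $\partial_x\rho\in L^\infty_tL^2_x$, and $w\in L^2_tL^\infty_x$ (a consequence of the above via Gagliardo--Nirenberg).
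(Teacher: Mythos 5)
Your proof is correct and follows essentially the same route as the paper: bound $\rho$ above and below, use \eqref{dt:w:L2} with the dissipation $\rho^{-1}\mu(\rho)\ge 1/c$, control the cubic term via Gagliardo--Nirenberg to produce the $\Vert w\Vert_{L^2}^{10/3}$ term, and close using the a priori $\Vert w\Vert_{L^2(0,T;L^2)}\le M$ coming from the definition of $w$ together with \eqref{enCont2} and the $\rho$-bounds. The only (cosmetic) difference is the final step: the paper rewrites the superlinear terms as $C\Vert w\Vert_{L^2}^2(\Vert w\Vert_{L^2}^2+\Vert\p_x\rho\Vert_{L^2}^4+1)$ and views $\Vert w\Vert_{L^2}^2+\Vert\p_x\rho\Vert_{L^2}^4+1$ as an $L^1(0,T)$ coefficient in a linear Gr\"onwall, whereas you push $Y=\sup_t\Vert w\Vert_{L^2}^2$ through the time integrals to obtain an implicit inequality $Y\le C_1+C_2 Y^{2/3}$; both close because they exploit exactly the same a priori $L^2_{t,x}$ bound on $w$, and the paper's variant is marginally cleaner since it avoids the implicit-inequality step.
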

\begin{proof}
	As a consequence of {\eqref{AssumpRho}, \eqref{rhobd1}, and \eqref{dt:w:L2}}, there exist $c := c(E_1, \Vert f\Vert_{L^2(0, T; L^\infty)}, \frac{1}{ \cl }, T)>0$ and $C:=C(E_1, \Vert f\Vert_{L^2(0, T; L^\infty)}, \frac{1}{ \cl }, T)>0$ such that
	\begin{align} \nonumber
	\frac{\de}{\de t}\int_{\T}  \frac{1}{2} |w|^2(x,t)\de x  &\leq  - \frac{1}{c}\int_{\T}  |\partial_x w|^2 dx + \int_{\T}\left(|u|+C|\p_x\rho| \right) |w \partial_x w |dx \\
	&\qquad  +C\left(\int_{\T}  |w|^2 dx +  \int_{\T}  |w|^3 d x +\int_{\T}  |\p_x f|^2 d x +1 \right).
	\end{align}
	We bound 
	\[
	\int_\T \la\p_x wwu\ra\de x \le \Vert \p_x w\Vert_{L^2}\Vert w\Vert_{L^2}\Vert u\Vert_{L^\infty}\le C_1\Vert \p_x w\Vert_{L^2}\Vert w\Vert_{L^2}\Vert u\Vert_{H^1}\le \frac{1}{4c}\Vert \p_x w\Vert_{L^2}^2+C\Vert w\Vert^2_{L^2}\Vert u\Vert^2_{H^1}
	\]
	where $C_1$ denotes absolute constants throughout this proof. Next, applying Gagliardo-Nirenberg's inequality and Young's inequality implies
	\[
	\int_\T \la w\ra^3\de x \le \Vert w\Vert_{L^3}^3\le C_1(\Vert \p_x w\Vert_{L^2}^{\frac{1}{2}}\Vert w\Vert_{L^2}^{\frac 52}+\Vert w\Vert^3_{L^2})\le \frac{1}{4c}\Vert \p_x w\Vert_{L^2}^2+C\Vert w\Vert_{L^2}^{\frac{10}{3}}+C\Vert w\Vert^3_{L^2}
	\]
	and
	\begin{align*}
	\int_\T \la{\p_x ww\p_x \rho}\ra\de x &\le   \Vert \p_x w\Vert_{L^2}\Vert w\Vert_{L^\infty}\Vert \p_x \rho\Vert_{L^2}\\
	&\le C_1 \Vert \p_x w\Vert_{L^2}(\Vert \p_x w\Vert_{L^2}^\mez\Vert w\Vert_{L^2}^\mez+\Vert w\Vert_{L^2})\Vert \p_x \rho\Vert_{L^2}\\
	&\le   C_1\Vert \p_x w\Vert_{L^2}^{\tdm}\Vert w\Vert_{L^2}^\mez\Vert \p_x \rho\Vert_{L^2}+ C_1\Vert \p_x w\Vert_{L^2}\Vert w\Vert_{L^2}\Vert \p_x \rho\Vert_{L^2}\\
	&\le \frac{1}{4c}\Vert \p_x w\Vert^2_{L^2}+C\Vert w\Vert^2_{L^2}\Vert \p_x \rho\Vert^4_{L^2}+C\Vert w\Vert_{L^2}^2\Vert \p_x \rho\Vert_{L^2}^2.
	\end{align*}
	
	Putting together the above bounds, and interpolating, yields the following inequality
	\begin{equation}\label{eq:togron}
	\begin{aligned}
	\frac 12 \frac{\de}{\de t} \Vert w\Vert^2_{L^2} + \frac{1}{4c} \Vert \p_x w\Vert^2_{L^2}\leq C \Vert{w}\Vert^2_{L^2} (\Vert w \Vert^2_{L^2} + \Vert \p_x \rho \Vert_{L^2}^4+ 1) + C\Vert \p_x f \Vert_{L^2}^2 + C.
	\end{aligned}
	\end{equation}
	In view of~\eqref{rhobd2}, we have
	\begin{equation*}
	\int_0^{T} \Vert \p_x \rho (\cdot, t)\Vert^4_{L^2} \de t \leq M(E_1, \Vert f\Vert_{L^2(0, T; L^\infty)}, \frac{1}{ \cl }, T).
	\end{equation*}
	Furthermore, using the definition of $w$ together with bounds~\eqref{rhobd1} \&~\eqref{enCont2}, we have
	\begin{equation*}
	\Vert w\Vert_{L^2(0,T; L^2)} \leq M(E_1, \Vert f\Vert_{L^2(0, T; L^\infty)}, \frac{1}{ \cl }, T).
	\end{equation*}
	The last two displays, together with Gr\"onwall's lemma applied to~\eqref{eq:togron}, yields the bound
	\begin{equation*}
	\begin{aligned}
	&\Vert w\Vert_{L^\infty(0, T; L^2(\T))}+\Vert \p_x w\Vert_{L^2(0, T; L^2(\T))}\\
	&\qquad \qquad \le M(\Vert w_0\Vert_{L^2}, c, C, E_1, \Vert f\Vert_{L^1(0,T; H^1)}, \frac 1 { \cl }, T)\le M(E_1, \Vert f\Vert_{L^1(0,T; H^1)}, \frac 1 { \cl }, T).
	\end{aligned}
	\end{equation*}
	Here, we used the fact that
	\begin{equation*}
	\Vert w_0 \Vert^2_{L^2} \leq 2 c_p^2\Vert \rho_0 \Vert_{L^\infty}^{2\gamma} +2 c_\mu^2\Vert \rho_0\Vert_{L^\infty}^{2\alpha}\Vert \p_x u_0\Vert^2_{L^2}.
	\end{equation*}
	
	The above bound can be used to obtain similar estimates for $\Vert \partial_x u \Vert_{L^\infty(0, T; L^2)}$ and $\Vert \partial_x^2  u \Vert_{L^2(0, T; L^2)}$ directly from the definition of $w$ \eqref{wDef}.
\end{proof}

\begin{lemm}\label{lemm:wH1}
	\bq\label{w:H1}
	\begin{aligned}
		\Vert \partial_x^2\rho\Vert_{L^\infty(0, T;  L^2)}&+\Vert \partial_xw\Vert_{L^\infty(0, T; L^2)}+\Vert \partial_x^2w\Vert_{L^2(0, T; L^2)}\\
		&\quad +\Vert \partial_x^2u\Vert_{L^\infty(0, T; L^2)}+\Vert \partial_x^3u\Vert_{L^2(0, T; L^2)}\le M(E_3, \Vert f\Vert_{L^1(0,T; H^1)}, \frac 1 { \cl }, T)
	\end{aligned}
	\eq
	where
	\[
	E_3=E_2+\Vert \partial_x^2 \rho_0\Vert_{L^2}+\Vert \partial_x^2u_0\Vert_{L^2}.
	\]
\end{lemm}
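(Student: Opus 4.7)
The plan is to bootstrap the $L^2$ control of $w$ from Lemma~\ref{lemm:wL2} to $H^1$ by performing an energy estimate for $\p_x w$, while simultaneously propagating $H^2$ regularity of $\rho$ using the continuity equation. The bounds on $\p_x^2 u$ and $\p_x^3 u$ will then follow algebraically from the identity $\p_x u = (w+p(\rho))/\mu(\rho)$ (a rearrangement of \eqref{wDef}), which differentiated $k$ times expresses $\p_x^{k+1} u$ as a combination of $\p_x^k w$, $\p_x^k \rho$, and lower order terms, with coefficients that are smooth functions of $\rho$ and uniformly bounded on $[\cl,\|\rho\|_{L^\infty}]$.

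For the $w$-part, I would differentiate \eqref{Weqn} once in $x$, multiply by $\p_x w$, and integrate over $\T$. The principal term produces a dissipation $\int_\T \tfrac{\mu(\rho)}{\rho}|\p_x^2 w|^2 \de x$, which by \eqref{AssumpRho}, \eqref{rhobd1}, and \eqref{EOS} is coercive in $\|\p_x^2 w\|_{L^2}^2$ up to a constant depending only on $\cl$, $\|\rho\|_{L^\infty}$ and the structural constants. The remaining terms split into three categories: (a) terms linear in $\p_x^2 w$ with coefficients controlled in $L^\infty_t L^2_x$, handled by Young's inequality so that half of the dissipation is absorbed; (b) polynomial terms in $w$, $\p_x w$, $f$ and $\p_x f$, controlled by Gagliardo--Nirenberg interpolation together with the $L^\infty_t L^2_x$ bound on $w$ from Lemma~\ref{lemm:wL2}; and (c) terms containing $\p_x^2\rho$ arising when $\p_x$ falls on the coefficients (which themselves are smooth functions of $\rho$). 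This last type must enter a coupled Gr\"onwall argument together with the $\rho$-estimate.

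The $H^2$ bound on $\rho$ is obtained by differentiating \eqref{eq:mass} twice and testing against $\p_x^2 \rho$; after one integration by parts, this yields
\begin{equation*}
\frac{1}{2}\frac{\de}{\de t}\|\p_x^2 \rho\|_{L^2}^2 = -\tfrac{5}{2}\int_\T \p_x u\,(\p_x^2\rho)^2\de x - 3\int_\T \p_x^2 u\,\p_x\rho\,\p_x^2\rho\de x - \int_\T \rho\,\p_x^3 u\,\p_x^2\rho\de x.
\end{equation*}
The first two terms are controlled by $\|\p_x u\|_{L^\infty}$ and $\|\p_x^2 u\|_{L^2}$ from Lemma~\ref{lemm:wL2} together with \eqref{rhobd2}. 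For the critical third term, I would use the identity above to write $\p_x^3 u$ as a linear combination of $\p_x^2 w$, $\p_x w$, $\p_x^2\rho$, and lower order factors in $\rho$, $\p_x\rho$, $w$; a Young inequality then trades a small multiple of $\|\p_x^2 w\|_{L^2}^2$ for the dissipation on the $w$ side, leaving a contribution of the form $C(\cl,\|\rho\|_{L^\infty},\|\p_x\rho\|_{L^2})(\|\p_x^2\rho\|_{L^2}^2 + \|\p_x w\|_{L^2}^2)$ that is handled by Gr\"onwall. Summing the two energy inequalities and invoking the previously established bounds closes the estimate, and finally the bounds on $\p_x^2 u$ in $L^\infty_t L^2_x$ and $\p_x^3 u$ in $L^2_t L^2_x$ are read off from the algebraic identity.

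The main obstacle will be ensuring that the substitution $\p_x^3 u \leftrightarrow \p_x^2 w$ in the $\rho$-estimate produces a top-order contribution exactly absorbable in the $\p_x^2 w$-dissipation generated on the $w$-side, without creating an uncontrollable product of the form $\|\p_x^2 w\|_{L^2}\|\p_x^2\rho\|_{L^2}^{3/2}$. Careful bookkeeping of the coefficients, using $\cl>0$ to rule out singular negative powers of $\rho$, and using $\|\rho\|_{L^\infty}<\infty$ from \eqref{rhobd1} to rule out unbounded positive powers, is what makes the scheme close with dependence only on $E_3$, $\|f\|_{L^1(0,T;H^1)}$, $1/\cl$ and $T$.
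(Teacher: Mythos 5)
Your proposal follows essentially the same strategy as the paper's proof: an $H^1$ energy estimate for $w$ obtained by differentiating \eqref{Weqn} once, coupled with an $H^2$ estimate for $\rho$ from the mass equation, the substitution $\p_x^3 u \leftrightarrow \p_x^2 w$ via \eqref{wDef}, absorption of the resulting top-order term into the $\rho^{\alpha-1}|\p_x^2 w|^2$ dissipation by Young's inequality, and a coupled Gr\"onwall argument. The only cosmetic difference is that the paper organizes the $\rho$-estimate through Kato--Ponce commutators (Step 1 with general $m$), whereas you write out the $m=2$ Leibniz expansion explicitly; the resulting inequality is the same.
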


\begin{proof}   To prove this lemma, we obtain energy estimates for the mass equation \eqref{eq:mass} and the $w$--equation \eqref{Weqn} simultaneously. The proof proceeds in 4 steps.
	
	{\bf Step 1.} Let $m\ge 2$ be an arbitrary integer. Differentiating equation \eqref{eq:mass} $m$ times, then multiplying the resulting equation by $\p_x^m\rho$ and integrating in space we get
	\begin{align*}
	\mez\frac{\de}{\de t}\int_\T |\p_x^m\rho|^2&=-\int_{\T} \p_x^m(u\p_x \rho)\p_x^m\rho-\int_\T \p_x^m(\rho \p_x u)\p_x^m\rho\\
	&=-\int_\T u\p_x\p_x^m\rho \p_x^m\rho-\int_\T \big([\p_x^m, u]\p_x \rho\big)\p_x^m\rho-\int_\T \big([\p_x^m, \rho]\p_x u \big)\p_x^m\rho-\int_\T \rho\p_x^{m+1} u \p_x^m\rho.
	\end{align*}
	Using the Kato-Ponce commutator estimate \cite{KP} and the inequality 
	\[
	\Vert \p_x g \Vert_{L^\infty(\T)}\le C\Vert \p_x^2 g \Vert_{L^2(\T)}\le C_n\Vert \p_x^n g\Vert_{L^2(\T)}\quad\forall n\ge 3,
	\]
	we have
	\[
	\Vert [\p_x^m, u]\p_x \rho\Vert_{L^2}\le C\Vert \p_x u\Vert_{L^\infty}\Vert \p_x^{m-1}\p_x \rho\Vert_{L^2}+C\Vert \p_x^mu\Vert_{L^2}\Vert \p_x \rho\Vert_{L^\infty}\le C\Vert \p_x^mu\Vert_{L^2}\Vert \p_x^m\rho\Vert_{L^2}
	\]
	and
	\[
	\Vert [\p_x^m, \rho]\p_x u\Vert_{L^2}\le C\Vert \p_x \rho\Vert_{L^\infty}\Vert \p_x^{m-1}\p_x u\Vert_{L^2}+C\Vert \p_x^m\rho\Vert_{L^2}\Vert \p_x u\Vert_{L^\infty}\le C\Vert  \p_x^mu\Vert_{L^2}\Vert \p_x^m\rho\Vert_{L^2}.
	\]
	In addition,
	\[
	\la \int_\T u\p_x\p_x^m\rho\p_x^m\rho\ra =\frac 12 \la \int_\T \p_x u|\p_x^m\rho|^2\ra\le \frac 12 \Vert \p_x u\Vert_{L^\infty}\Vert \p_x^m\rho\Vert_{L^2}^2 \le C\Vert  \p_x^mu\Vert_{L^2}\Vert \p_x^m\rho\Vert^2_{L^2}.
	\]
	We thus obtain
	\bq\label{dt:dmh:L2}
	\frac{\de}{\de t}\Vert \p_x^m\rho\Vert_{L^2}^2\le C\Vert  \p_x^mu\Vert_{L^2}\Vert \p_x^m\rho\Vert_{L^2}^2+\Vert \rho\Vert_{L^\infty}\Vert \p_x^{m+1}u\Vert_{L^2}\Vert \p_x^m\rho\Vert_{L^2}.
	\eq
	{\bf Step 2.} Recall equation \eqref{Weqn} with power-law pressure and viscosity
	\bq\label{eq:w2}
	\begin{aligned}
	\partial_t w&= c_\mu \rho^{\alpha-1} \p_x^2 w- (u+c_\mu \rho^{\alpha-2} \p_x \rho )\p_x w
	+    \frac{c_p}{c_\mu} \left(\gamma  -2(\alpha  +1) \right)\rho^{\gamma-\alpha} w\\
	&\quad - \frac{1}{c_\mu}(\alpha  +1)\rho^{-\alpha} w^2+  \frac{c^2_p}{c_\mu} \left(\gamma  -(\alpha  +1) \right)\rho^{2 \gamma-\alpha}+c_\mu \rho^\alpha \p_x f.
	\end{aligned}
	\eq
	Differentiating in space, multiplying the resulting equation by $\p_x w$ and integrating by parts in $x$ leads to
	\begin{align*}
	\mez\frac{\de}{\de t}\int_\T |\p_x w|^2&=- c_\mu\int_\T \rho^{\alpha-1} |\p_x^2 w|^2 + \int_\T(u+c_\mu \rho^{\alpha-2} \p_x \rho )\p_x w\p_x^2 w  +  \frac{c_p}{c_\mu} \left(\gamma  -2(\alpha  +1) \right)\int_\T |\p_x w|^2\rho^{\gamma-\alpha}\\
	&\quad + \frac{c_p}{c_\mu}(\gamma-\alpha) \left(\gamma  -2(\alpha  +1) \right)\int_\T w \rho^{\gamma-\alpha-1} \p_x w\p_x \rho\\
	&\quad -\frac{2}{c_\mu}(\alpha  +1)\int_\T \rho^{-\alpha}w|\p_x w|^2 +\frac{\alpha }{c_\mu}(\alpha  +1)\int_\T w^2\p_x w\p_x \rho \rho^{-\alpha-1}\\
	&\quad  + \frac{c^2_p}{c_\mu}(2\gamma-\alpha) \left(\gamma  -(\alpha  +1) \right) \int_\T \rho^{2\gamma-\alpha-1} \p_x w\p_x \rho -c_\mu \int_{\T} \rho^\alpha \p_x^2 w \p_x f\\
	&=:- c_\mu\int_\T \rho^{\alpha-1} |\p^2_x w|^2+\sum_{j=1}^7 H_j.
	\end{align*}
	after integrating by parts. By virtue of \eqref{AssumpRho} and \eqref{rhobd1}, there exists $c:=c(E_1, \Vert f\Vert_{L^2(0, T; L^\infty)}, \frac{1}{ \cl }, T)>0$ such that
	\[
	c_\mu\int_\T \rho^{\alpha-1} |\p_x^2 w|^2\ge \frac{1}{c}\int_\T |\p_x^2 w|^2.
	\]
	Note, under our assumptions $\rho$ and $1/\rho$ are bounded (see \eqref{AssumpRho} and \eqref{rhobd1}).  Therefore all coefficients involving $L^\infty$ norms of $\rho$ to some power can be bounded by some constant $C= M(E_1, \Vert f\Vert_{L^2(0, T; L^\infty)}, \frac{1}{ \cl }, T, \gamma, \alpha)$.  The constant may change line by line.  
	\begin{itemize}
		\item Estimate for $H_1$:
		\begin{align*}
		\la \int_\T  (u+c_\mu \rho^{\alpha-2} \p_x \rho )\p_x w\p^2_x w\ra &\le \Vert \p^2_x w\Vert_{L^2}\Vert \p_x w\Vert_{L^2}\Vert u\Vert_{L^\infty}+C\Vert \p^2_x w\Vert_{L^2}\Vert \p_x w\Vert_{L^2}\Vert \p_x \rho\Vert_{L^\infty}\\
		&\le \frac{1}{10c}\Vert \p^2_x w\Vert^2_{L^2}+C\Vert \p_x w\Vert^2_{L^2}\Vert u\Vert^2_{H^1}+C\Vert \p_x w\Vert_{L^2}^2\Vert \p^2_x \rho\Vert^2_{L^2}.
		\end{align*}
		\item Estimate for $H_2$:
		\[
		\la \int_\T |\p_x w|^2\rho^{\gamma-\alpha} \ra \le C\Vert \p_x w\Vert_{L^2}^2.
		\]
		\item Estimate for $H_3$:
		\begin{align*}
		\la  \int_\T w\p_x w\p_x \rho \rho^{\gamma-\alpha-1} \ra &\le \| \rho^{\gamma-\alpha-1}\|_\infty  \Vert w\Vert_{L^\infty}\Vert \p_x w\Vert_{L^2}\Vert \p_x \rho\Vert_{L^2}\\
		&\le C\Vert w\Vert_{L^2}\Vert \p_x w\Vert_{L^2}\Vert \p_x \rho\Vert_{L^2}+C\Vert \p_x w\Vert_{L^2}^2\Vert \p_x \rho\Vert_{L^2}.
		\end{align*}
		\item Estimate for $H_4$:
		\begin{align*}
		\la\int_\T \rho^{-\alpha}w|\p_x w|^2 \ra &\le \frac{1}{ \cl ^\alpha}\Vert w\Vert_{L^\infty}\Vert \p_x w\Vert_{L^2}^2\le \frac{1}{4 \cl ^{2\alpha}}\Vert w\Vert_{L^\infty}^2+C\Vert \p_x w\Vert_{L^2}^4\\
		&\le C\Vert w\Vert_{H^1}^2+C\Vert \p_x w\Vert_{L^2}^4 .
		\end{align*}
		\item Estimate for $H_5$:
		\begin{align*}
		\la \int_\T w^2\p_x w\p_x \rho \rho^{-\alpha-1} \ra &\le \frac{1}{ \cl ^{1+\alpha}}\Vert \p_x w\Vert_{L^2}\Vert w\Vert_{L^\infty}^2\Vert \p_x \rho\Vert_{L^2}\\
		&\le C \Vert \p_x w\Vert_{L^2}\Vert w\Vert_{H^1}^2\Vert \p_x \rho\Vert_{L^2}\\
		&\le C \Vert \p_x w\Vert_{L^2}\Vert w\Vert_{L^2}^2\Vert \p_x \rho\Vert_{L^2}+ C \Vert \p_x w\Vert_{L^2}^3\Vert \p_x \rho\Vert_{L^2}.
		\end{align*}
		\item Estimate for $H_6$:
		\[
		\la  \int_\T \rho^{\gamma-\alpha-1} \p_x w\p_x \rho \ra \le  C\Vert \p_x w\Vert_{L^2}\Vert \p_x \rho\Vert_{L^2}.
		\]
		\item Estimate for $H_7$:
		\[
		\la   \int_{\T} \rho^\alpha \p_x^2 w \p_x f \ra \le  \frac 1 {10 c} \Vert \p^2_x w\Vert^2_{L^2} + C \Vert \p_x f\Vert^2_{L^2}.
		\]
	\end{itemize}
	Putting together the above estimates gives
	\bq\label{dt:dw:L2}
	\begin{aligned}
		&\frac{\de}{\de t}\Vert \p_x w\Vert_{L^2}^2+\frac{1}{2c}\Vert \p^2_x w\Vert_{L^2}^2\\
		&\le C  \left( \Vert \p_x w\Vert^2_{L^2}\Vert u\Vert^2_{H^1}+ \Vert \p_x w\Vert_{L^2}^2\Vert \p^2_x \rho\Vert^2_{L^2}+\Vert \p_x w\Vert_{L^2}^4 + \Vert \p_x w\Vert_{L^2}^3\Vert \p_x \rho\Vert_{L^2}\right)+G
	\end{aligned}
	\eq
	with 
	\begin{align*}
	G&= C  \left( \Vert \rho\Vert_{L^\infty}\Vert \p_x w\Vert_{L^2}^2+\Vert w\Vert_{L^2}\Vert \p_x w\Vert_{L^2}\Vert \p_x \rho\Vert_{L^2}+\Vert \p_x w\Vert_{L^2}^2\Vert \p_x \rho\Vert_{L^2}\right.\\
	&\left.\qquad  +\Vert w\Vert_{H^1}^2+\Vert \p_x w\Vert_{L^2}\Vert w\Vert_{L^2}^2\Vert \p_x \rho\Vert_{L^2}+ \Vert \p_x w\Vert_{L^2}\Vert \p_x \rho\Vert_{L^2}+\Vert \p_x f\Vert^2_{L^2}\right).
	\end{align*}
	By virtue of the estimates \eqref{rhobd1}, \eqref{rhobd2} and \eqref{w:L2} we deduce that 
	\[
	\Vert G\Vert_{L^1((0, T))}\le M(E_2, \Vert f\Vert_{L^2(0, T; H^1)}, \frac{1}{ \cl }, T).
	\]
	{\bf Step 3.} Letting $m=2$ in \eqref{dt:dmh:L2} and using the embedding $H^1(\T)\subset L^\infty(\T)$ we get
	\[
	\frac{\de}{\de t}\Vert \p^2_x \rho\Vert_{L^2}^2\le C\Vert \p^2_x u\Vert_{L^2}\Vert \p^2_x \rho\Vert_{L^2}^2+C\Vert \rho\Vert_{H^1}\Vert \p^3_x u\Vert_{L^2}\Vert \p^2_x \rho\Vert_{L^2}.
	\]
	Recalling the definition \eqref{wDef} $w=- c_p\rho^\gamma +c_\mu \rho^\alpha \partial_xu$ we have
	\begin{align}\nonumber
	\p^3_x u&=\p_x^2(\frac{w}{c_\mu \rho^\alpha}+\frac{c_p}{c_\mu} \rho^{\gamma-\alpha} )\\ \nonumber
	&=\frac{\p^2_x w}{c_\mu \rho^\alpha}-2\alpha \frac{\p_x w\p_x \rho}{c_\mu \rho^{\alpha+1}}-\alpha \frac{w\p^2_x \rho}{c_\mu \rho^{\alpha+1}}+\alpha(\alpha+1)\frac{w|\p_x \rho|^2}{c_\mu \rho^{\alpha+2}}\\ \label{utriple}
	&\quad +\frac{c_p}{c_\mu}  (\gamma-\alpha)\p^2_x \rho \rho^{\gamma-\alpha-1} + \frac{c_p}{c_\mu}  (\gamma-\alpha)(\gamma-\alpha-1)|\p_x \rho|^2 \rho^{\gamma-\alpha-2}.
	\end{align}
	Consequently
	\begin{align*}
	\Vert \p^3_x u\Vert_{L^2}& \leq C  \left( \Vert \p^2_x w\Vert_{L^2}+\Vert \p_x w\Vert_{L^2}\Vert \p_x \rho\Vert_{L^\infty}+\Vert w\Vert_{H^1}\Vert \p^2_x \rho\Vert_{L^2}\right.\\
	&\left. \qquad +\Vert w\Vert_{L^\infty}\Vert \p_x \rho\Vert_{L^2}\Vert \p_x \rho\Vert_{L^\infty}+ \| \rho^{\gamma-\alpha-1}\|_\infty \Vert \p^2_x \rho\Vert_{L^2}+  \| \rho^{\gamma-\alpha-2}\|_\infty \Vert \p_x \rho\Vert_{L^2}\Vert \p_x \rho\Vert_{L^\infty}\right).
	\end{align*}
	Therefore, we obtain
	\bq\label{dt:d2h:L2}
	\begin{aligned}
		&\frac{\de}{\de t}\Vert \p^2_x \rho\Vert_{L^2}^2\\
		&\le C  \left( \Vert \p^2_x u\Vert_{L^2}\Vert \p^2_x \rho\Vert_{L^2}^2+ \Vert \rho\Vert_{H^1}\Vert \p^2_x w\Vert_{L^2}\Vert \p^2_x \rho\Vert_{L^2}+\Vert \rho\Vert_{H^1}\Vert \p_x w\Vert_{L^2}\Vert \p^2_x \rho\Vert_{L^2}\Vert \p_x \rho\Vert_{L^\infty}\right. \\
		&\qquad +\Vert w\Vert_{H^1}\Vert \rho\Vert_{H^1}\Vert \p^2_x \rho\Vert^2_{L^2} +\Vert w\Vert_{L^\infty}\Vert \rho\Vert_{H^1}^2\Vert \p_x \rho\Vert_{L^\infty}\Vert \p^2_x \rho\Vert_{L^2}\\
		&\qquad \left.+\Vert \rho\Vert_{H^1}\Vert \p^2_x \rho\Vert^2_{L^2} + \Vert \rho\Vert_{H^1}^2\Vert \p^2_x \rho\Vert_{L^2}^2\right)\\
		&\le \frac{1}{10c}\Vert \p^2_x w\Vert^2_{L^2} +C\left( \Vert \p^2_x u\Vert_{L^2}\Vert \p^2_x \rho\Vert_{L^2}^2+\Vert \rho\Vert^2_{H^1}\Vert \p^2_x \rho\Vert^2_{L^2}+\Vert \rho\Vert_{H^1}\Vert \p_x w\Vert_{L^2}\Vert \p^2_x \rho\Vert_{L^2}^2\right. \\
		&\qquad \left.+\Vert w\Vert_{H^1}\Vert \rho\Vert_{H^1}\Vert \p^2_x \rho\Vert^2_{L^2}+\Vert w\Vert_{H^1}\Vert \rho\Vert_{H^1}^2\Vert \p^2_x \rho\Vert^2_{L^2}+\Vert \rho\Vert_{H^1}\Vert \p^2_x \rho\Vert^2_{L^2}+\Vert \rho\Vert_{H^1}^2\Vert \p^2_x \rho\Vert_{L^2}^2\right) \\
		&\le \frac{1}{10c}\Vert \p^2_x w\Vert^2_{L^2}+F\Vert \p^2_x \rho\Vert^2_{L^2},
	\end{aligned}
	\eq
	with
	\[
	\begin{aligned}
	F=\quad &C  \left(\Vert \p^2_x u\Vert_{L^2}+\Vert \rho\Vert^2_{H^1}+\Vert \rho\Vert_{H^1}\Vert \p_x w\Vert_{L^2}\right.\\
	&\quad \left.+\Vert w\Vert_{H^1}\Vert \rho\Vert_{H^1}+\Vert w\Vert_{H^1}\Vert \rho\Vert_{H^1}^2+ \Vert \rho\Vert_{H^1}+ \Vert \rho\Vert_{H^1}^2\right).
	\end{aligned}
	\]
	Combining the estimates \eqref{rhobd1}, \eqref{rhobd2} and \eqref{w:L2} yields
	\[
	\Vert F\Vert_{L^1((0, T))}\le M(E_2, \Vert f \Vert_{L^2(0,T; H^1(\T))},\frac{1}{ \cl },T).
	\]
	{\bf Step 4.} Adding \eqref{dt:d2h:L2} to \eqref{dt:dw:L2} leads to
	\bq\label{dt:energy:2}
	\begin{aligned}
		\frac{\de}{\de t}(\Vert \p^2_x \rho\Vert_{L^2}^2+\Vert \p_x w\Vert_{L^2}^2)+\frac{1}{4c}\Vert \p^2_x w\Vert_{L^2}^2&\le \Vert \p_x w\Vert^2_{L^2}H+\Vert \p^2_x \rho\Vert^2_{L^2}(F+C\Vert \p_x w\Vert_{L^2}^2)+G\\
		&\le  (\Vert \p_x w\Vert^2_{L^2}+\Vert \p^2_x \rho\Vert_{L^2})(H+F+C \Vert \p_x w\Vert_{L^2}^2)+G
	\end{aligned}
	\eq
	with 
	\[
	H= C  \left(\Vert u\Vert^2_{H^1}+\Vert \p_x w\Vert_{L^2}^2+\Vert \p_x w\Vert_{L^2}\Vert \p_x \rho\Vert_{L^2}\right)
	\]
	satisfying, in virtue of~\eqref{rhobd1},~\eqref{rhobd2} and~\eqref{w:L2},
	\[
	\Vert H\Vert_{L^1((0, T))}\le  M(E_2, \Vert f\Vert_{L^2(0, T; H^1)}, \frac{1}{ \cl }, T).
	\]
	Finally, we integrate \eqref{dt:energy:2} in time, then apply Gr\"onwall's lemma, the estimates for $F$, $G$ and $H$, and the estimate \eqref{w:L2} on $\Vert \p_x w\Vert_{L^2(0, T; L^2)}$ to obtain
	\begin{align*}
	&\Vert \p^2_x \rho\Vert_{L^\infty(0, T;  L^2)}+\Vert \p_x w\Vert_{L^\infty(0, T; L^2)}+\frac{1}{c}\Vert \p^2_x w\Vert_{L^2(0, T; L^2)}\\
	&\qquad \qquad \le M(E_2, \Vert f\Vert_{L^2(0, T; H^1)}, \frac{1}{ \cl }, T, \Vert \p^2_x \rho_0 \Vert_{L^2},\Vert \p_x w_0\Vert_{L^2} )\\
	&\qquad \qquad \le M(E_3, \Vert f\Vert_{L^2(0, T; H^1)}, \frac{1}{ \cl }, T),
	\end{align*}
	where 
	\[
	E_3=E_2+\Vert \partial_x^2 \rho_0\Vert_{L^2}+\Vert \partial_x^2u_0\Vert_{L^2}.
	\]
	It then follows easily that 
	\[
	\Vert \p^2_x u\Vert_{L^\infty(0, T; L^2)}+\Vert \p^3_x u\Vert_{L^2(0, T; L^2)}\le  M(E_3, \Vert f\Vert_{L^2(0, T; H^1)}, \frac{1}{ \cl }, T).
	\]
\end{proof}

\begin{lemm}\label{lemm:wH2}
For any $k\ge 2$ there exists $M_k$ depending only on $k$ such that
	\bq\label{w:Hk-1}
	\begin{aligned}
		\Vert \partial_x^k\rho\Vert_{L^\infty(0, T;  L^2)}&+\Vert \partial_x^{k-1}w\Vert_{L^\infty(0, T; L^2)}+\Vert \partial_x^kw\Vert_{L^2(0, T; L^2)}\\
		&\quad +\Vert \partial_x^{k}u\Vert_{L^\infty(0, T; L^2)}+\Vert \partial_x^{k+1}u\Vert_{L^2(0, T; L^2)}\le M_k\big(E_{k+1}, \Vert f\Vert_{L^2(0, T; H^{k-1})}, \frac{1}{ \cl }, T\big)
	\end{aligned}
	\eq
	where
	\[
	E_{k+1}=E_{k}+\Vert \partial_x^k \rho_0\Vert_{L^2}+\Vert \partial_x^ku_0\Vert_{L^2}.
	\]
\end{lemm}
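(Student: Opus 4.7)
I argue by induction on $k$, the base case $k=2$ being Lemma~\ref{lemm:wH1}. Fix $k\ge 3$ and assume \eqref{w:Hk-1} at the level $k-1$: this gives control of $\Vert\rho\Vert_{L^\infty(0,T;H^{k-1})}$, $\Vert w\Vert_{L^\infty(0,T;H^{k-2})}$, $\Vert w\Vert_{L^2(0,T;H^{k-1})}$, $\Vert u\Vert_{L^\infty(0,T;H^{k-1})}$ and $\Vert u\Vert_{L^2(0,T;H^k)}$ in terms of $E_k$, $\Vert f\Vert_{L^2(0,T;H^{k-2})}$, $\tfrac{1}{\cl}$ and $T$. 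Following the scheme of Lemma~\ref{lemm:wH1}, I couple differential inequalities for $\Vert\p_x^k\rho\Vert_{L^2}^2$ (from the mass equation \eqref{eq:mass}) and $\Vert\p_x^{k-1}w\Vert_{L^2}^2$ (from the $w$--equation \eqref{eq:w2}), then close by Gr\"onwall.

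For $\p_x^k\rho$, applying $\p_x^k$ to \eqref{eq:mass} and testing against $\p_x^k\rho$, the Kato--Ponce commutator estimate yields, exactly as in \eqref{dt:dmh:L2},
\[
\tfrac{\de}{\de t}\Vert\p_x^k\rho\Vert_{L^2}^2 \le C\Vert\p_x^k u\Vert_{L^2}\Vert\p_x^k\rho\Vert_{L^2}^2 + C\Vert\rho\Vert_{L^\infty}\Vert\p_x^{k+1}u\Vert_{L^2}\Vert\p_x^k\rho\Vert_{L^2}.
\]
The top-order factor $\Vert\p_x^{k+1}u\Vert_{L^2}$ is handled via the algebraic identity $\p_x u = c_\mu^{-1}\rho^{-\alpha}(w+c_p\rho^\gamma)$: differentiating $k$ times and using Moser's composition bound (legitimate since $\cl\le\rho\le\Vert\rho\Vert_{L^\infty}$) produces $\Vert\p_x^{k+1}u\Vert_{L^2}\le C\Vert\p_x^k w\Vert_{L^2}+C\Vert\p_x^k\rho\Vert_{L^2}+L_k(t)$ with $L_k\in L^2(0,T)$ bounded by the induction. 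For $\p_x^{k-1}w$, applying $\p_x^{k-1}$ to \eqref{eq:w2} and testing against $\p_x^{k-1}w$, integration by parts of the principal term generates the dissipation $-\tfrac{1}{c}\Vert\p_x^k w\Vert_{L^2}^2$, while the commutator $[\p_x^{k-1},c_\mu\rho^{\alpha-1}]\p_x^2 w$ is absorbed by Young's inequality using a Moser composition bound on $\rho^{\alpha-1}$. Each remaining term of \eqref{eq:w2} has the schematic form $A(\rho)B(w)u^\sigma$ with $\sigma\in\{0,1\}$, and is estimated in $H^{k-1}$ by the tame product rule
\[
\Vert fg\Vert_{H^{k-1}}\le C\big(\Vert f\Vert_{L^\infty}\Vert g\Vert_{H^{k-1}}+\Vert g\Vert_{L^\infty}\Vert f\Vert_{H^{k-1}}\big),
\]
together with Moser composition and the induction hypothesis.

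Summing the two inequalities, absorbing the stray $\Vert\p_x^k w\Vert_{L^2}^2$ from the mass estimate into the diffusive term of the $w$--estimate, and invoking Gr\"onwall with the $L^1_t$ remainders furnished by the induction and by $\Vert f\Vert_{L^2(0,T;H^{k-1})}$ yields the three bounds on $\Vert\p_x^k\rho\Vert_{L^\infty L^2}$, $\Vert\p_x^{k-1}w\Vert_{L^\infty L^2}$ and $\Vert\p_x^k w\Vert_{L^2 L^2}$. The companion bounds on $\Vert\p_x^k u\Vert_{L^\infty L^2}$ and $\Vert\p_x^{k+1}u\Vert_{L^2 L^2}$ then follow from one further differentiation of $\p_x u = c_\mu^{-1}\rho^{-\alpha}(w+c_p\rho^\gamma)$ and one more application of the product/composition estimates. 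The main technical burden is the bookkeeping of the many terms produced by $\p_x^{k-1}$ acting on the composite coefficients $\rho^{\alpha-1}$, $\rho^{\gamma-\alpha}$, $\rho^{-\alpha}$, $\rho^{2\gamma-\alpha}$, $\rho^{\alpha-2}\p_x\rho$, $\rho^\alpha$ and on the quadratic nonlinearity $\rho^{-\alpha}w^2$ in \eqref{eq:w2}: what must be checked is that every such term is controlled only by $\Vert\p_x^{k-1}w\Vert_{L^2}$, $\Vert\p_x^k\rho\Vert_{L^2}$ and $L^1_t$ data from the induction, with no uncontrolled $\Vert\p_x^k w\Vert_{L^2}$ factor left outside the dissipation. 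The lower bound $\rho\ge\cl$ is essential throughout, since it renders each $\rho^\beta$ smooth on the range visited by the solution and legitimates every use of the Moser composition estimate.
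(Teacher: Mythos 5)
Your proposal follows the paper's proof essentially step for step: induction on $k$ with base case Lemma~\ref{lemm:wH1}; a coupled pair of differential inequalities for $\Vert\p_x^k\rho\Vert_{L^2}^2$ (via \eqref{dt:dmh:L2}) and $\Vert\p_x^{k-1}w\Vert_{L^2}^2$ (via $\p_x^{k-1}$ of \eqref{eq:w2} tested against $\p_x^{k-1}w$); elimination of $\Vert\p_x^{k+1}u\Vert_{L^2}$ through the algebraic relation $\p_xu=c_\mu^{-1}\rho^{-\alpha}(w+c_p\rho^\gamma)$; absorption of the stray $\Vert\p_x^kw\Vert_{L^2}$ into the dissipation; Kato--Ponce, tame products, and composition bounds (using $\rho\ge\cl$) throughout; Gr\"onwall to close; and the $u$ estimates recovered from the definition of $w$ at the end. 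This is the same argument, correctly laid out.
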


\begin{proof}
	The proof proceeds by induction in $k$. According to Lemma \ref{lemm:wH1}, \eqref{w:Hk-1} holds for $k=2$. Assuming that  \eqref{w:Hk-1} holds for $k-1$ with $k\ge 3$,  to obtain it for $k$ we perform $H^k$ energy estimate for $\rho$ and $H^{k-1}$ energy estimate for $w$. This follows along the same lines as that of Lemma \ref{lemm:wH1}. We first apply \eqref{dt:dmh:L2} with $m=k$  to have
	\bq\label{dt:d3h}
	\begin{aligned}
	\frac{\de}{\de t}\Vert \p^k_x \rho\Vert_{L^2}^2&\le C\Vert \p_x^k u\Vert_{L^2}\Vert \p^k_x \rho\Vert_{L^2}^2+\Vert \rho\Vert_{L^\infty}\Vert \p_x^{k+1} u\Vert_{L^2}\Vert \p^k_x \rho\Vert_{L^2}\\
	&\le M\big(E_{k}, \Vert f\Vert_{L^2(0, T; H^{k-2})}, \frac{1}{ \cl }, T\big)\Big(\Vert  \p_x^ku\Vert_{L^2}\Vert \p^k_x \rho\Vert_{L^2}^2+\Vert \p_x^{k+1} u\Vert_{L^2}\Vert \p^k_x \rho\Vert_{L^2}\Big).
	\end{aligned}
	\eq
	By differentiating  $k$ times the formula 
	\[
	\p_xu=\frac{1}{c_\mu}w\rho^{-\alpha}+c_p\rho^{\gamma-\alpha}
	\]
	and using the induction hypothesis together with the fact that $k\ge 3$ we obtain
	\begin{align*}
	\Vert \p_x^{k+1} u\Vert_{L^2}&\le C\Vert [\p_x^k, \rho^{-\alpha}]w\Vert_{L^2}+C\Vert  \rho^{-\alpha}\p_x^kw\Vert_{L^2}+\Vert \p_x^k\rho^{\gamma-\alpha}\Vert_{L^2}\\
	&\le C\Vert \p_x\rho^{-\alpha}\Vert_{L^\infty}\Vert w\Vert_{H^{k-1}}+C\Vert  \rho^{-\alpha}\Vert_{H^k}\Vert w\Vert_{L^\infty}+C\Vert  \rho^{-\alpha}\Vert_{L^\infty}\Vert\p_x^kw\Vert_{L^2}+\Vert \p_x^k\rho^{\gamma-\alpha}\Vert_{L^2}\\
	 &\le C\Vert \rho^{-\alpha}\Vert_{H^2}\Vert w\Vert_{H^{k-1}}+C\Vert  \rho^{-\alpha}\Vert_{H^k}\Vert w\Vert_{H^1}++C\Vert  \rho^{-\alpha}\Vert_{H^1}\Vert\p_x^kw\Vert_{L^2}+\Vert \rho^{\gamma-\alpha}\Vert_{H^k}\\
	 &\le M\big(E_{k}, \Vert f\Vert_{L^2(0, T; H^{k-2})}, \frac{1}{ \cl }, T\big)\big(\Vert \p_x^kw\Vert_{L^2}+\Vert \p_x^k\rho\Vert_{L^2}+1\big).
	\end{align*}
	It then follows from \eqref{dt:d3h} that
	\bq\label{dtrhoHk}
	\begin{aligned}
	\frac{\de}{\de t}\Vert \p^k_x \rho\Vert_{L^2}^2&\le M\big(E_{k}, \Vert f\Vert_{L^2(0, T; H^{k-2})}, \frac{1}{ \cl }, T\big)\Big[\Vert \p^k_x \rho\Vert_{L^2}^2\big(\Vert \p_x^{k} u\Vert_{L^2}+1\big)+\Vert \p_x^kw\Vert_{L^2}\Vert \p^k_x \rho\Vert_{L^2}+1\Big]\\
	&\le \frac{1}{10c}\Vert \p_x^kw\Vert_{L^2}^2+M\big(E_{k}, \Vert f\Vert_{L^2(0, T; H^{k-2})}, \frac{1}{ \cl }, T\big)\Big[\Vert \p^k_x \rho\Vert_{L^2}^2\big(\Vert \p_x^{k} u\Vert_{L^2}+1\big)+1\Big]
	\end{aligned}
	\eq
	where $c=c(E_1, \Vert f\Vert_{L^2(0, T; L^\infty)}, \frac{1}{ \cl }, T)>0$ be a positive number such that 
\[
\rho^{\alpha-1}\ge \frac{1}{c}\quad\forall (x, t)\in \T\times [0, T^*).
\]
	 Next, we differentiate equation \eqref{eq:w2} $k-1$ times  in $x$, multiply the resulting equation by $\p_x^{k-1}w$ and integrate over $\T$. We estimate successively each resulting term on the right hand side of \eqref{eq:w2}.\\
1. The dissipation term:
\begin{align*}
	  \int_\T\p_x^{k-1} &\big(\rho^{\alpha-1} \p_x^2 w\big)\p_x^{k-1}w=- \int_\T\p_x^{k-2}\big(\rho^{\alpha-1} \p_x^2 w\big)\p_x^kw\\
	 &= - \int_\T\rho^{\alpha-1}|\p_x^kw|^2- \int_\T \p_x^kw\sum_{\ell=1}^{k-2}C_\ell\p_x^{\ell} \rho^{\alpha-1}\p_x^{k-\ell}w\\
	 &\le -\frac{1}{c}\Vert\p_x^kw\Vert_{L^2}^2+C\Vert \p_x^kw\Vert_{L^2}\sum_{\ell=1}^{k-2}C_\ell\Vert \p_x^{\ell} \rho^{\alpha-1}\Vert_{L^\infty}\Vert \p_x^{k-\ell}w\Vert_{L^2}\\
	 &\le -\frac{1}{c}\Vert\p_x^kw\Vert_{L^2}^2+C\Vert \p_x^kw\Vert_{L^2}\Vert \rho\Vert_{H^{k-1}}\big(\Vert \p_x^{k-1}w\Vert_{L^2}+\Vert w\Vert_{L^2}\big)\\
	 &\le -\frac{1}{2c}\Vert\p_x^kw\Vert_{L^2}^2+C'\Vert \rho\Vert_{H^{k-1}}^2\big(\Vert \p_x^{k-1}w\Vert^2_{L^2}+\Vert w\Vert^2_{L^2}\big)\\
	 &\le -\frac{1}{2c}\Vert\p_x^kw\Vert_{L^2}^2+M\big(E_{k}, \Vert f\Vert_{L^2(0, T; H^{k-2})}, \frac{1}{ \cl }, T\big)\big(\Vert \p_x^{k-1}w\Vert^2_{L^2}+1\big).
\end{align*}
2. The drift term. We have
\begin{align*}
 & \int_\T\p_x^{k-1}\big(u\p_xw+c_\mu \rho^{\alpha-2} \p_x \rho\p_xw\big)\p_x^{k-1}w&=- \int_\T\p_x^{k-2}\big(u\p_xw\big)\p_x^kw-c_\mu \int_\T\p_x^{k-2}\big(\p_x \frac{\rho^{\alpha-1}}{\alpha-1}\p_xw\big)\p_x^kw
\end{align*}
where we adopted the convention $\frac{\rho^{\alpha-1}}{\alpha-1}=\ln \rho$ when $\alpha=1$. Noting that $H^{k-2}(\T)$ is an algebra for $k\ge 3$, we then bound
\begin{align*}
&\hspace{-10mm} \la \int_\T\p_x^{k-1}\big(u\p_xw+c_\mu \rho^{\alpha-2} \p_x \rho\p_xw\big)\p_x^{k-1}w\ra \\
&\le C\Vert \p_x^kw\Vert_{L^2}\Vert u\Vert_{H^{k-2}}\Vert w\Vert_{H^{k-1}}+C\Vert \p_x^kw\Vert_{L^2}\Vert \frac{\rho^{\alpha-1}}{\alpha-1}\Vert_{H^{k-1}}\Vert w\Vert_{H^{k-1}}\\
&\le \frac{1}{20c}\Vert \p_x^kw\Vert_{L^2}^2+C'\Vert u\Vert_{H^{k-2}}^2\Vert w\Vert_{H^{k-1}}^2+C'\Vert \frac{\rho^{\alpha-1}}{\alpha-1}\Vert_{H^{k-1}}^2\Vert w\Vert_{H^{k-1}}^2\\
&\le  \frac{1}{20c}\Vert \p_x^kw\Vert_{L^2}^2+M\big(E_{k}, \Vert f\Vert_{L^2(0, T; H^{k-2})}, \frac{1}{ \cl }, T\big)\big(\Vert \p_x^{k-1}w\Vert_{L^2}^2+1\big)
\end{align*}
3. The nonlinearity term:
\begin{align*}
\la \int_\T \p_x^{k-1}\big(\rho^{-\alpha} w^2\big)\p_x^{k-1}w\ra &=\la \int_\T \p_x^{k-2}\big(\rho^{-\alpha} w^2\big)\p_x^kw\ra \\
&\le  C\Vert \rho^{-\alpha}\Vert_{H^{k-2}}\Vert w\Vert_{H^{k-2}}^2\Vert \p_x^kw\Vert_{L^2}\\
&\le  \frac{1}{20c}\Vert \p_x^kw\Vert_{L^2}+C'\Vert \rho^{-\alpha}\Vert_{H^{k-2}}^2\Vert w\Vert_{H^{k-2}}^4\\
&\le \frac{1}{20c}\Vert \p_x^kw\Vert_{L^2}+M\big(E_{k}, \Vert f\Vert_{L^2(0, T; H^{k-2})}, \frac{1}{ \cl }, T\big).\end{align*}
4. The zero order term:
\begin{align*}
\la \int_\T \p_x^{k-1}(\rho^{2\gamma-\alpha})\p_x^{k-1}w\ra &\le C\Vert \rho^{2\gamma-\alpha}\Vert_{H^{k-1}}\Vert \p_x^{k-1}w\Vert_{L^2}\\
&\le M\big(E_{k}, \Vert f\Vert_{L^2(0, T; H^{k-2})}, \frac{1}{ \cl }, T\big)\Vert \p_x^{k-1}w\Vert_{L^2}.
\end{align*}
5. The forcing term:
\begin{align*}
\la \int_{\T} \p_x^{k-1}\big(\rho^\alpha \p_xf\big)\p_x^{k-1}w\ra&= \la \int_{\T} \p_x^{k-2}\big(\rho^\alpha \p_xf\big)\p_x^kw\ra\\
&\le C\Vert \rho^\alpha\Vert_{H^{k-2}}\Vert\p_xf\Vert_{H^{k-2}}\Vert \p_x^kw\Vert_{L^2}\\
&\le \frac{1}{20c}\Vert \p_x^kw\Vert_{L^2}^2+M\big(E_{k}, \Vert f\Vert_{L^2(0, T; H^{k-2})}, \frac{1}{ \cl }, T\big)\Vert f\Vert_{H^{k-1}}^2.
\end{align*}
Putting the estimates 1. through 5. together, we obtain
\begin{align*}
\mez\frac{\de}{\de t}\Vert \p_x^{k-1}w\Vert_{L^2}^2&\le \frac{-2}{5c}\Vert \p_x^kw\Vert_{L^2}^2+M\big(E_{k}, \Vert f\Vert_{L^2(0, T; H^{k-2})}, \frac{1}{ \cl }, T\big)\Vert \p_x^{k-1}w\Vert_{L^2}^2\\
&\quad+M\big(E_{k}, \Vert f\Vert_{L^2(0, T; H^{k-2})}, \frac{1}{ \cl }, T\big)\big(\Vert f\Vert_{H^{k-1}}^2+1).
\end{align*}
Combining this with \eqref{dtrhoHk} and Gr\"onwall's lemma leads to
\begin{align*}
&\Vert \p_x^k\rho\Vert_{L^\infty(0, T; L^2)}^2+\Vert \p_x^{k-1}w\Vert_{L^\infty(0, T; L^2)}^2+\Vert \p_x^kw\Vert_{L^2(0, T; L^2)}^2\\
&\quad \le M\Big(\Vert \p_x^k\rho_0\Vert_{L^2}^2+\Vert \p_x^{k-1}w_0\Vert_{L^2}^2+\Vert f\Vert_{L^2(0, T; H^{k-1})}^2+T\Big)\exp\Big(M\big(\Vert \p_x^{k} u\Vert_{L^1(0, T; L^2)}+T\big)\Big)
\end{align*}
where we denoted
\[
M\equiv M\big(E_{k}, \Vert f\Vert_{L^2(0, T; H^{k-2})}, \frac{1}{ \cl }, T\big)
\]
and used the fact that the $L^2(0, T; H^k)$ norm of $u$ is controlled by $M$.

It follows easily from this that $\Vert \partial_x^{k}u\Vert_{L^\infty(0, T; L^2)}$ and $\Vert \partial_x^{k+1}u\Vert_{L^2(0, T; L^2)}$ can be controlled by the same bound. This finishes the proof of \eqref{w:Hk-1}.
\end{proof}
 In view of Lemmas \ref{lemm:wL2}, \ref{lemm:wH1} and \ref{lemm:wH2} we have proved that
 \bq 
\begin{aligned}
&\sup_{T\in [0,T^*)} \Vert \rho\Vert_{L^\infty(0, T;  H^k)}+\sup_{T\in [0,T^*)}\Vert u\Vert_{L^\infty(0, T; H^k)}+\sup_{T\in [0,T^*)}\Vert u\Vert_{L^2(0, T; H^{k+1})}\\
&\qquad\qquad\qquad\qquad\le M_k\Big(\Vert (\rho_0, u_0)\Vert_{H^k\times H^k}, \Vert f\Vert_{L^2(0, T^*; H^{\max\{k-1, 1\}})}, \frac{1}{ \cl }, T^*\Big)<\infty
\end{aligned}
\eq
for $k\ge 1$. 
 Appealing to local existence, established by Prop. \ref{prop:local}, the solution can be extended past $T^*$.

\section{Proof of Theorem \ref{theo:global}}
We assume here that $c_p >0$ and that $\alpha\in (\frac{1}{2},1]$, $\gamma\geq 2\alpha$. By Prop. \ref{prop:local}, there exists a positive time $T_0$ such that problem \eqref{eq:mass}-\eqref{eq:initv} has a unique solution $(\rho, u)$ on $[0, T_0]$ such that
 \bq\label{localreg:v}
\rho\in C(0, T_0; H^k),\quad u\in C(0, T_0; H^k)\cap L^2(0, T_0; H^{k+1}), \quad k\ge 3,
\eq
and $\rho>0$ on $[0, T_0]$. 
 Let $T^*$ be the maximal lifetime of the classical solution $(\rho,u)$, so that, by Thm. \ref{theo:cont}, 
\bq\label{assu:contr}
\inf_{t\in (0, T^*)}\min_{x\in \T}\rho(x, t)=0.
\eq
We claim that $T^*=\infty$. We will argue by contradiction.    Let us note that the $H^k$ regularity, $k\ge 3$, of $(\rho, u)$ suffices to justify all the calculations below. Recall from the proof of Lemma \ref{entropy:0} in Appendix~\ref{app:entropy}, that
\be
X= u + c_\mu \rho^{\alpha-2}\partial_x \rho,
\ee
defined also in Eq.~\eqref{xDef}, satisfies
\begin{align}\label{DtX}
\p_t  X + u \p_x X & 
= - \gamma \frac{c_p }{c_\mu} \rho^{\gamma-\alpha}  (X-u)+f=- \gamma \frac{c_p }{c_\mu} \rho^{\gamma-\alpha}X+\gamma \frac{c_p }{c_\mu}\rho^{\gamma-\alpha}u+f.
\end{align}
By Lemma \ref{PiCont:1} 1., we have 
\be\label{gs1:rhoLinfty}
\|\rho\|_{L^\infty(0,T;L^\infty(\T))} \leq M(E_1, \Vert f\Vert_{L^2(0, T; L^\infty)}, T).
\ee
Since $\gamma\geq 2\alpha\geq \alpha+\frac{1}{2}$ for $\alpha\in (\frac{1}{2},1]$, combining the above estimate with \eqref{enCont1}, we have
\be\label{gs1:f1}
\| \rho^{\gamma-\alpha} u\|_{L^\infty(0,T;L^2(\T))}  \leq M(E_1, \Vert f\Vert_{L^2(0, T; L^\infty)}, T).
\ee
Note also
\begin{align*}
\partial_x(\rho^{\gamma-\alpha} u)
&= (\sqrt{\rho}\partial_x u) \rho^{\gamma-\alpha-\frac{1}{2}} + (\gamma-\alpha)   \rho^{\gamma-2\alpha} (\rho^{\alpha-\frac{3}{2}}\partial_x \rho) (\sqrt{\rho} u) 
\end{align*}
Now, estimate \eqref{entinqe1} implies 
\[
\| (\rho^{\alpha-\frac{3}{2}}\partial_x \rho)\|_{L^2(0,T;L^2(\T))}  \leq M(E_1, \Vert f\Vert_{L^2(0, T; L^\infty)}, T).
\]
Putting together this,~\eqref{enCont1},~\eqref{enCont2},~\eqref{gs1:rhoLinfty}, and the assumption that $\gamma\ge 2\alpha$ we deduce that
\[
\|\partial_x(\rho^{\gamma-\alpha} u)\|_{L^2(0,T;L^1(\T))}  \leq M(E_1, \Vert f\Vert_{L^2(0, T; L^\infty)}, T).
\]
which combined with \eqref{gs1:f1} yields
\be\label{est-rhs-Xeq}
\Vert \rho^{\gamma-\alpha} u\Vert_{L^2(0,T;W^{1, 1})}\le M(E_1, \Vert f\Vert_{L^2(0, T; L^\infty)}, T).
\ee
Since \eqref{DtX} is a transport equation we then have
\be\label{X:Linfty}
\begin{aligned}
	\Vert X \Vert_{L^\infty(0,T; L^\infty)} &\le \big(\Vert X_0 \Vert_{L^\infty}+\gamma \frac{c_p }{c_\mu}\Vert \rho^{\gamma-\alpha}u\Vert_{L^1(0, T; L^\infty)}+\Vert f\Vert_{L^1(0, T; L^\infty)}\big)\exp\big(\gamma \frac{c_p }{c_\mu} \Vert\rho^{\gamma-\alpha}\Vert_{L^1(0, T; L^\infty)}\big)\\
	&\le M(E_1, \Vert X_0\Vert_{L^\infty},\Vert f\Vert_{L^2(0, T; L^\infty)}, T).
\end{aligned}
\ee
Recall that $X=u + \frac{\partial_x \rho}{\rho^2} \mu(\rho)=u + c_\mu\rho^{\alpha-2}\p_x\rho$, hence $X\rho^{\gamma-\alpha}=u\rho^{\gamma-\alpha}+ c_\mu\rho^{\gamma-2}\p_x\rho$. It then follows from \eqref{gs1:rhoLinfty}, \eqref{est-rhs-Xeq} and \eqref{X:Linfty} that
\bq\label{est:rhs:veq}
\Vert \rho^{\gamma-2}\p_x\rho\Vert_{L^2(0, T; L^{\infty})}\le  M(E_1, \Vert X_0\Vert_{L^\infty},\Vert f\Vert_{L^2(0, T; L^\infty)}, T).
\eq
Using \eqref{eq:mass} and \eqref{eq:mom} we obtain
\bq\label{eq:v}
\p_t u+(u-\frac{\mu'(\rho)\p_x\rho}{\rho})\p_xu=\frac{\mu(\rho)}{\rho}\p_x^2u-\frac{p'(\rho)\p_x\rho}{\rho}+f=c_\mu \rho^{\alpha-1}\p_x^2u-c_p \gamma\rho^{\gamma-2}\p_x\rho+f.
\eq
Using the maximum principle (see the argument leading to \eqref{eq:wm} below and a similar argument for the minimum) and the bound \eqref{est:rhs:veq} gives
\begin{equation}\label{eq:ulinf}
\begin{aligned}
\Vert u\Vert_{L^\infty(0, T; L^{\infty})}&\le \Vert u_0\Vert_{L^\infty}+c_p  \gamma\Vert\rho^{\gamma-2}\p_x\rho\Vert_{L^1(0, T; L^\infty)}+\Vert f\Vert_{L^1(0, T; L^\infty)}\\
&\le M(E_1, \Vert (X_0, u_0)\Vert_{L^\infty},\Vert f\Vert_{L^2(0, T; L^\infty)}, T).
\end{aligned}
\end{equation}
From the definition of $X$ and \eqref{X:Linfty}, this yields
\bq\label{rho:key1}
\Vert \p_x\rho^{\alpha-1}\Vert_{L^\infty(0, T ; L^{\infty})}\le M(E_1, \Vert (X_0, u_0)\Vert_{L^\infty},\Vert f\Vert_{L^2(0, T; L^\infty)}, T)
\eq
when $\alpha< 1$, and
\bq\label{rho:key2}
\Vert \p_x\ln \rho\Vert_{L^\infty(0, T ; L^{\infty})}\le M(E_1, \Vert (X_0, u_0)\Vert_{L^\infty},\Vert f\Vert_{L^2(0, T; L^\infty)}, T)
\eq
when $\alpha=1$.

When $\alpha< 1$, the continuity equation implies
\be
\partial_t (\rho^{\alpha-1}) =  -(\alpha-1)\partial_x (u\rho)  \rho^{\alpha-2}.
\ee
Integrating this in space and time and using the definition of $X$ leads to
\bq\begin{aligned}\label{rho:key11}
\int_\T \rho^{\alpha-1}(x,T) \de x&=\int_\T\rho_0^{\alpha-1}\de x+(\alpha-1)(\alpha-2)\int_0^t \int_\T (u\rho\rho^{\alpha-3}\p_x \rho) (x,z)\de x \de z\\
&=\int_\T\rho_0^{\alpha-1}\de x+\frac{1}{c_\mu}(\alpha-2)(\alpha-1)\int_0^t \int_\T(uc_\mu\rho^{\alpha-2}\partial_x\rho)(x,z)\de x \de z\\
&\le \int_\T\rho_0^{\alpha-1}\de x+C\int_0^t \int_\T X^2(x,z)\de x \de z,
\end{aligned}
\eq
valid for $0 \leq t \leq T$.

Similarly, when $\alpha=1$ we have
\be \label{eq:intlog}
\left| \int_\T \ln\rho(x,t)\de x \right| \le  \left|\int_\T\ln\rho_0\de x \right|+C\int_0^t \int_\T X^2(x,z) \de x \de z,  \quad 0 \leq t \leq T.
\ee
Then by virtue of \eqref{X:Linfty}, {\eqref{eq:ulinf}}, \eqref{rho:key1}, \eqref{rho:key11}, Poincar\'e-Wirtinger's inequality and Sobolev embedding we deduce that
\[
\Vert \rho^{\alpha-1}\Vert_{L^\infty(0, T; L^\infty)}\le M(E_1, \Vert (X_0, u_0)\Vert_{L^\infty}, \Vert \rho_0^{\alpha -1}\Vert_{L^1}, \Vert f\Vert_{L^2(0, T; L^\infty)}, T)
\]
if $\alpha< 1$.

On the other hand, if $\alpha =1$,~\eqref{gs1:rhoLinfty} combined with with ~\eqref{eq:intlog}, Poincar\'e-Wirtinger's inequality and Sobolev embedding, yields
\[
\Vert \ln \rho\Vert_{L^\infty(0, T; L^\infty)} \leq M(E_1, \Vert (X_0, u_0)\Vert_{L^\infty}, \Vert \ln \rho_0\Vert_{L^1}, \Vert f\Vert_{L^2(0, T; L^\infty)}, T).
\]
Consequently
\[
\inf_{(x, t)\in \T\times [0, T]}\rho(x, t)\ge \mathcal{F}\left(M(E_0, \Vert (X_0, u_0)\Vert_{L^\infty}, \Vert  \rho_0^{\alpha-1}\Vert_{L^1}+\Vert \ln \rho_0\Vert_{L^1}, \Vert f\Vert_{L^2(0, T; L^\infty)}, T)\right)
\]
where 
\be
\mathcal{F}(z)=
\begin{cases}
	z^{\frac{1}{\alpha-1}}&\quad\text{if}~\alpha< 1,\\
	e^{-z}&\quad\text{if}~\alpha= 1.
\end{cases}
\ee
Therefore,
\[
\inf_{(x, t)\in \T\times [0, T^*)}\rho(x, t)\ge \mathcal{F}\left(M(E_0, \Vert (X_0, u_0)\Vert_{L^\infty}, \Vert  \rho_0^{\alpha-1}\Vert_{L^1}, \Vert \ln \rho_0\Vert_{L^1}, \Vert f\Vert_{L^2(0, T^*; L^\infty)}, T^*)\right)>0
\]
which contradicts \eqref{assu:contr}.

\section{Proof of Theorem \ref{theo:global2}}
Recall the assumptions \eqref{assumThm1.2} and \eqref{assumThm1.2:2}
Assume that $c_p>0$ and either 
\begin{align}
&\alpha>\mez, \quad \gamma\in [\alpha,\alpha+1], \quad \gamma \neq 1\quad\text{or}\\ 
&\alpha\ge 0,\quad \gamma\in [\alpha,\alpha+1],\quad \gamma>1.
\end{align}
 By Prop. \ref{prop:local}, there exists a positive time $T_0$ such that problem \eqref{eq:mass}-\eqref{eq:initv} has a unique solution $(\rho, u)$ on $[0, T_0]$ such that
 \bq\label{localreg:v}
\rho\in C(0, T_0; H^k),\quad u\in C(0, T_0; H^k)\cap L^2(0, T_0; H^{k+1}), \quad k\ge 4,
\eq
and $\rho>0$ on $[0, T_0]$.  Let $T^*$ be the maximal existence time. We claim that $T^*=\infty$. Assume by contradiction that $T^*$ is finite. By Theorem \ref{theo:cont} we have
 \bq\label{contradiction:assu}
 \inf_{t\in [0, T^*)} \min_{x\in \T}\rho(x, t)=0.
 \eq
From Lemma \ref{LemwEqn}, the $w$ equation \eqref{Weqn} is
\begin{align} \nonumber
\partial_t w&=  c_\mu\rho^{\alpha-1} \partial_x^2 w- (u+c_\mu\rho^{\alpha-2}\partial_x \rho )\partial_xw+    \frac{c_p }{c_\mu } \left(\gamma -2 (\alpha+1)  \right)\rho^{\gamma-\alpha}w\\
&\quad -\frac{1}{c_\mu} (\alpha+1)\rho^{-\alpha}   w^2+ \frac{c^2_p }{c_\mu } \left(\gamma - (\alpha+1) \right)\rho^{2\gamma-\alpha}.
\label{wEqn2}
\end{align}
Note that the assumption $f(x, t)=f(t)$ was used to have $\p_xf=0$.
 It follows from \eqref{localreg:v} and the equation \eqref{wEqn2} that 
\[
w\in C(0, T; H^3)\cap L^2(0, T; H^4), \qquad \partial_t w \in C(0,T; H^1)\subset C(\T \times [0,T])
\]
 Thus, $w\in C^1(\T\times [0, T])$ and thus the function
\be\label{wmax}
w_M(t):=\max_{x\in \T} w(x, t)
\ee
is Lipschitz continuous on $[0, T]$.  According to the Rademacher theorem, $w_M$ is differentiable almost everywhere on $[0 ,T]$.  
There exists for each $t\in [0, T^*)$ a point $x_t$ such that 
\[
w_M(t)=w(x_t, t).
\]
Let $t\in (0, T)$ be a point at which $w_M$ is differentiable. We have
\begin{align*}
w'_M(t)&=\lim_{h\to 0^+}\frac{w_M(t+h)-w_M(t)}{h}\\
&=\lim_{h\to 0^+}\frac{w(x_{t+h}, t+h)-w(x_t, t)}{h}\\
&\ge \lim_{h\to 0^+}\frac{w(x_t, t+h)-w(x_t, t)}{h}=\p_tw(x_t, t).
\end{align*}
On the other hand,
\begin{align*}
w'_M(t)&=\lim_{h\to 0^+}\frac{w_M(t)-w_M(t-h)}{h}\\
&=\lim_{h\to 0^+}\frac{w(x_t, t)-w(x_{t-h}, t-h)}{h}\\
&\le \lim_{h\to 0^+}\frac{w(x_t, t)-w(x_t, t-h)}{h}=\p_tw(x_t, t).
\end{align*}
Thus, $w_M'(t)=\p_tw(x_t, t)$ if $w_M$ is differentiable at $t$. We deduce from this and equation \eqref{wEqn2} that for almost every $t\in (0, T)$,
\begin{align}\label{eq:wm}
\partial_t w_M&\leq     A(t) w_M  + B(t) w_M^2+ C(t) 
\end{align}
with
\begin{align*}
A(t)&:= \frac{c_p }{c_\mu } \left(\gamma -2 (\alpha+1)  \right)\rho(x_t)^{\gamma-\alpha}\\
B(t)&:= -\frac{1}{c_\mu}(\alpha+1)\rho(x_t)^{-\alpha}\\
C(t)&:=\frac{c^2_p }{c_\mu }  \left(\gamma - (\alpha+1) \right)\rho(x_t)^{2\gamma-\alpha}.
\end{align*}
where we used the facts that $\p_x^2 w(x_t, t)\le 0$ and $\p_x w(x_t, t)=0$.  Note that $B(t)\le 0$. In addition, the function $C$ is nonpositive under the conditions \eqref{assumThm1.2}. The condition on the initial data \eqref{initialDataCond} is equivalent to $w_M(0)\leq 0$.  We deduce that
\be\label{maximumPrinc}
w(t)\leq 0, \qquad \forall t<T^*.
\ee
At the point $y_t$ where the density attains its minimum value $\rho_m:=\rho(y_t,t)$, $\rho_m$ satisfies
\begin{align}
\partial_t \rho_m &= - \partial_x u(y_t) \rho_m = - \frac{w(y_t)}{c_\mu} \rho_m^{1-\alpha} - \frac{c_p }{c_\mu} \rho_m^{\gamma -\alpha+ 1}\geq - \frac{c_p }{c_\mu} \rho_m^{\gamma -\alpha+ 1}
\end{align}
where we used \eqref{maximumPrinc}.  Provided that $\gamma\neq \alpha$, this  implies the differential inequality
\begin{align}
\frac{1}{(\alpha -\gamma)}\partial_t ( \rho_m^{\alpha -\gamma}
)\geq - \frac{c_p }{c_\mu}.
\end{align}
  Since  $\alpha<\gamma$, we find
\begin{align}
\partial_t ( \rho_m^{\alpha -\gamma}
)\leq  \frac{c_p }{c_\mu}(\gamma-\alpha)
\end{align}
which implies 
\be \label{firstmin}
\rho_m(t)\geq  \left(\rho_m(0)^{\alpha -\gamma} + t\frac{c_p }{c_\mu}(\gamma-\alpha)\right)^{\frac{1}{\alpha-\gamma}}, \quad\forall t<T^*
\ee
Since $c_p /c_\mu>0$, this implies that 
\be
 \inf_{t\in [0, T^*)} \min_{x\in \T}\rho(x, t)\geq  \left(\rho_m(0)^{\alpha -\gamma} + T^*\frac{c_p }{c_\mu}(\gamma-\alpha)\right)^{{\frac{1}{\alpha-\gamma}}}>0
\ee 
which contradicts the assumption \eqref{contradiction:assu}.    We conclude that the solution $(\rho, u)$ is global in time. 

On the other hand, when $\alpha = \gamma$ we have
\begin{align}
\partial_t \ln \rho_m
\geq - \frac{c_p }{c_\mu}
\end{align}
and thus
\be\label{secondmin}
\rho_m(t)\geq \rho_m(0)\exp\left(-t \frac{c_p}{c_\mu}\right) >0
\ee
which again leads to a contradiction with \eqref{contradiction:assu}.

\begin{rema}
With a more refined maximum principle argument, one can relax the regularity requirement of $k\geq 4$ which we used to conclude that \eqref{wmax} is Lipschitz continuous on $[0,T]$.
\end{rema}

\section{Proof of Theorem \ref{densBndProp}}
In this section, we give an upper bound for the long-time average maximum density, assuming that the forcing has zero mean in space. This follows by an application of the Bresch-Desjardins's entropy and the following elementary lemma.
\begin{lemm}\label{lem:easy}
	Let {$m\geq \frac 12$}. If $h^m\in W^{1,1}(\T)$ then we have
	\begin{equation}\label{interlemm}
	\Vert h \Vert_{L^{\infty}(\T)} \leq 2 \Vert \p_x (h^{m}) \Vert^{\frac 1 m}_{L^1(\T)} + 4 \Vert h \Vert_{L^1(\T)}.
	\end{equation}
\end{lemm}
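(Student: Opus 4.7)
\textbf{Proof plan for Lemma \ref{lem:easy}.}

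The plan is to reduce everything to a one-point comparison argument on the periodic interval. Since $h^m\in W^{1,1}(\T)$ has an absolutely continuous representative and the map $z\mapsto z^{1/m}$ is continuous on $[0,\infty)$, I may take $h\ge 0$ to be continuous. Let $x^\ast\in\T$ be a point where $h$ attains its maximum, so that $\|h\|_{L^\infty(\T)}=h(x^\ast)$. Since $|\T|=1$, the average of $h$ over $\T$ equals $\|h\|_{L^1(\T)}$, so by continuity there is a point $x_0\in\T$ with $h(x_0)\le \|h\|_{L^1(\T)}$.

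Next, I would apply the fundamental theorem of calculus to the absolutely continuous function $h^m$ along the shorter of the two arcs of $\T$ joining $x_0$ to $x^\ast$. This gives
\begin{equation*}
h(x^\ast)^m\le h(x_0)^m+\int_{\T}|\partial_x(h^m)|\,\de x\le \|h\|_{L^1(\T)}^m+\|\partial_x(h^m)\|_{L^1(\T)}.
\end{equation*}

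To conclude it remains to take the $m$-th root. For $m\ge 1$ the map $z\mapsto z^{1/m}$ is sub-additive on $[0,\infty)$, so
\begin{equation*}
\|h\|_{L^\infty(\T)}\le \|h\|_{L^1(\T)}+\|\partial_x(h^m)\|_{L^1(\T)}^{1/m},
\end{equation*}
which is stronger than the claimed bound. For $\tfrac12\le m<1$ one has $1<1/m\le 2$, hence $(a+b)^{1/m}\le 2^{1/m-1}(a^{1/m}+b^{1/m})\le 2(a^{1/m}+b^{1/m})$ for $a,b\ge 0$; applying this with $a=\|h\|_{L^1(\T)}^m$ and $b=\|\partial_x(h^m)\|_{L^1(\T)}$ yields
\begin{equation*}
\|h\|_{L^\infty(\T)}\le 2\|\partial_x(h^m)\|_{L^1(\T)}^{1/m}+2\|h\|_{L^1(\T)},
\end{equation*}
which implies \eqref{interlemm} with room to spare.

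The only subtle point is the handling of the exponent $m\in[\tfrac12,1)$, where $z\mapsto z^{1/m}$ is convex rather than concave; this is why the hypothesis $m\ge \tfrac12$ enters the constants (it ensures $1/m\le 2$ so that $2^{1/m-1}\le 2$). There is no real obstacle — the entire argument is a combination of the mean-value observation on $\T$, the FTC for absolutely continuous functions, and an elementary power inequality.
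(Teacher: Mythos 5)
Your argument is correct and follows essentially the same route as the paper's: pick a point $x_0$ where $h$ is at most its average, compare to the maximum via the fundamental theorem of calculus applied to the absolutely continuous function $h^m$, and finish with the elementary power inequality $(a+b)^{1/m}\le 2a^{1/m}+2b^{1/m}$, which holds precisely because $m\ge\tfrac12$. If anything your version is slightly tighter than the paper's, which introduces an unnecessary $\sqrt 2$ factor at the choice of $x_0$ and states the power inequality for all $m>0$ when in fact, as you correctly note, it requires $m\ge\tfrac12$.
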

\begin{proof}[Proof of Lemma~\ref{lem:easy}]
	Since ${h}\in W^{1,1}(\T)\subset C^{0}(\T)$,  we have $h\in C^0(\T)$. In particular, there exists a point $x_0 \in \T$ such that $|h(x_0)| \leq {\sqrt{2}}\Vert h \Vert_{L^1(\T)}$. For all $x\in \T$ we have
	\[
	h^m(x)=\int_{x_0}^x\p_y(h^m(y))dy+h^m(x_0),
	\]
	hence
	\[
	|h(x)|^m\le \Vert \p_xh^m\Vert_{{L^1}(\T)}+|h(x_0)|^m\le \Vert \p_x(h^m)\Vert_{L^1(\T)}+{\sqrt{2}}\Vert h\Vert_{L^1(\T)}^m.
	\]
	In view of the elementary inequality 
	\[
	(a+b)^{\frac 1 m} \leq 2a^{\frac 1 m} + 2b^{\frac 1 m},\quad a,~b,~m>0,
	\] 
	we thus obtain \eqref{interlemm}.
\end{proof}
\begin{proof}[Proof of Theorem \ref{densBndProp}] Recall our assumptions
	\begin{equation}\label{LongTimeAssum}
	\gamma \in [\max\{2-\alpha,\alpha\}, \alpha + 1], \quad \alpha \geq 1/2, \quad \text{and} \quad c_p, c_\mu > 0.
	\end{equation}
	
	Next, by Lemma~\ref{entropy:0}, the entropy
		\be
	s= \frac{\rho}{2} \left|u + \frac{\partial_x \rho}{\rho^2} \mu(\rho)\right|^2 + \pi (\rho).
	\ee
	satisfies
	\begin{equation}
	\frac{\de}{\de t}\int_{\T}  s(x,t) \de x  = -\int_{\T} |\partial_x \rho|^2 \mu(\rho) \frac{p'(\rho)}{\rho^2} dx+\int_\T  f\rho \big(u + \frac{\partial_x \rho}{\rho^2}\mu(\rho)\big)\de x.
	\end{equation}
	Integrating this in time yields
	\begin{equation*}
	\begin{aligned}
	\int_{\T} s(x,T)\de x - \int_{\T} s(x,0)\de x + &c_p c_\mu \gamma \int_0^T \int_\T \rho^{\alpha + \gamma -3} |\p_x \rho|^2 \de x \de t \\
	&= \int_0^T \int_{\T} f \rho u \de x \de t + c_\mu \int_0^T \int_\T f \rho^{\alpha -1 } \p_x \rho \, \de x \de t.
	\end{aligned}
	\end{equation*}
	Using the assumption \eqref{zeromean} we calculate
	\begin{equation*}
	\begin{aligned}
	\int_0^T \int_{\T} f \rho u \, \de x \de t &= - \int_0^T \int_{\T} g \p_x(\rho u) \de x \de t = \int_0^T \int_{\T} g \p_t \rho \, \de x \de t \\
	&= \int_\T( g \rho)(x,T) \, \de x - \int_\T (g \rho)(x,0) \, \de x - \int_0^T \int_{\T} \rho \p_t g \, \de x \de t.
	\end{aligned}
	\end{equation*}
	This implies
	\begin{equation*}
	\begin{aligned}
	&\left|\int_0^T \int_{\T} f \rho u \, \de x \de t \right| \leq 2\Vert g \Vert_{L^\infty(0,T; L^\infty)}\Vert \rho_0 \Vert_1 +  \Vert\p_t g \Vert_{L^1(0,T; L^\infty)}\Vert \rho_0 \Vert_1\\
	&\leq 2\Vert g \Vert_{L^\infty(0,T; L^\infty)}\Vert \rho_0 \Vert_1 +  T \Vert\p_t g \Vert_{L^\infty(0,T; L^\infty)}\Vert \rho_0 \Vert_1.
	\end{aligned}
	\end{equation*}
	On the other hand, using Cauchy--Schwarz, we have
	\begin{equation*}
	\begin{aligned}
	\left|c_\mu \int_0^T \int_\T f \rho^{\alpha -1 } \p_x \rho \, \de x \de t \right| &\leq \frac 12 c_p c_\mu \gamma \int_0^T \int_\T \rho^{\alpha + \gamma - 3}|\p_x \rho|^2 \de x \de t + C \int_0^T \int_\T \rho^{\alpha -\gamma +1} f^2 \de x \de t\\
		&\leq  \frac 12 c_p c_\mu \gamma \int_0^T \int_\T \rho^{\alpha + \gamma - 3}|\p_x \rho|^2 \de x \de t + CT(1+\Vert \rho_0 \Vert_1)\Vert f\Vert_{L^\infty(0, T; L^\infty)}^2.
	\end{aligned}
	\end{equation*}
	Here, $C$ is a constant which depends only on $c_\gamma, c_p$ and $\gamma$. We have used the assumption \eqref{LongTimeAssum} that $\gamma$ belongs to the range $ \gamma\in [\max\{2-\alpha,\alpha\}, \alpha+1]$ with $\alpha\geq 1/2$ to have $0 \leq \alpha - \gamma +1 \leq 1$.
	
	Note that the allowed range of $\gamma$ and $\alpha$ requires that $\gamma\geq 3/2$ always.  Since, in particular $\gamma>1$ we have $\pi(\rho)\ge 0$ and $s\ge 0$. Thus, putting all together, we obtain the bound
	\begin{equation*}
	\begin{aligned}
\frac 12& c_p c_\mu \gamma \int_0^T \int_\T \rho^{\alpha + \gamma - 3}|\p_x \rho|^2 \de x \de t \\
	&\leq  2\Vert g \Vert_{L^\infty(0,T; L^\infty)}\Vert \rho_0 \Vert_1 +  T \Vert\p_t g \Vert_{L^\infty(0,T; L^\infty)}\Vert \rho_0 \Vert_1+ CT(1+\Vert \rho_0 \Vert_1 )\Vert \p_xg\Vert_{L^\infty(0, T; L^\infty)}^2+ \int_\T s(x,0) \de x.
	\end{aligned}
	\end{equation*}
	We thus obtain 
	\[
	\frac 12c_p c_\mu \gamma \int_0^T \int_\T \rho^{\alpha + \gamma - 3}|\p_x \rho|^2 \de x \de t\leq M_1 T + M_0,
	\]
	where $M_0$ is a constant which depends only on $c_\mu$, $c_p$, $\gamma$, $\alpha$, $\Vert\rho_0 \Vert_{L^\infty}$, $\Vert\rho^{-1}_0 \Vert_{L^\infty}$, $\Vert u_0\Vert_{L^2}$, $\Vert \p_x \rho_0 \Vert_{L^2}$, $\Vert g \Vert_{L^\infty(0,T; L^\infty)}$, and $M_1$ a constant which depends only on $c_\mu$, $c_p$, $\gamma$, $\Vert\rho_0 \Vert_{L^1}$, $\Vert \p_t g \Vert_{L^\infty(0,T; L^\infty)}$, $\Vert \p_xg\Vert_{L^\infty(0, T; L^\infty)}$.
	
	In particular,
	\begin{equation*}
	\int_0^T \int_\T |\p_x (\rho^{\frac 1 2(\alpha + \gamma - 1)})|^2 \de x \de t  \leq M_3 T + M_2,
	\end{equation*}
	where $M_{i +2} = \frac{(\alpha + \gamma -1)^2}{2 c_p c_\mu \gamma}M_i$, for $i = 0,1$. Here, we used the fact that $\alpha + \gamma -1 > 0$.
	
	{By assumption \eqref{LongTimeAssum} we have that $\alpha + \gamma \geq 2\max\{1,\alpha\} \geq 2$ which implies $\frac 1 m \leq 2$.} We now apply Lemma~\ref{lem:easy} with $m := \frac 1 2 (\alpha + \gamma -1)$. Using the embedding $L^2(\T)\subset L^1(\T)$, we obtain
	\begin{equation*}
	\int_0^T \Vert \rho(\cdot, t) \Vert_{L^\infty}\de t \leq 2\int_0^T \Vert \p_x (\rho^{m}) \Vert^{\frac 1 m}_{L^2}\de t + 4 T \Vert \rho_0 \Vert_{L^1}.
	\end{equation*}
	{Consequently,}
	\begin{equation*}
	\int_0^T \Vert \rho(\cdot, t) \Vert_{L^\infty}\de t \leq 2\int_0^T (\Vert \p_x (\rho^{m}) \Vert^{2}_{L^2} + 1)\de t + 4 T \Vert \rho_0 \Vert_{L^1} \leq 2 (M_3 T + M_2) +  2T + 4 T \Vert \rho_0 \Vert_{L^1}.
	\end{equation*}
	Hence,
	\begin{equation}
	\frac 1 T \int_0^T \Vert \rho(\cdot, t) \Vert_{L^\infty}\de t \leq (2M_3+ 2 + 4\Vert \rho_0 \Vert_{L^1}) + \frac{2}{T} M_2,
	\end{equation}
	and the claim follows, with the definition
	\begin{equation}\label{eq:m4m5}
	C_1 = 2 M_2, \qquad  C_2 := 2M_3+ 2 + 4\Vert \rho_0 \Vert_{L^1}.
	\end{equation}
	
\end{proof}

\appendix

\section{Bresch-Desjardins's entropy}\label{app:entropy}

For the sake of completeness we present the proof of Lemma \ref{entropy:0} which essentially follows from \cite{Bresch2003, Bresch2003b, BreschReview}. From the continuity equation \eqref{eq:mass}, any smooth $\xi(\rho)$ satisfies
\be\label{MassarbFunction}
\partial_t \xi(\rho) = \partial_t \rho \xi'(\rho) = -\partial_x(u\rho)  \xi'(\rho)= -u \partial_x \xi(\rho) - \rho (\partial_x u) \xi'(\rho)
\ee
Using equation \eqref{MassarbFunction} applied to the function $\partial_x \xi(\rho)$, we find the evolution of $\rho \partial_x \xi(\rho))$:
\bq
\begin{aligned}
\partial_t(\rho \partial_x \xi(\rho)) &= -\partial_x(\rho u) \partial_x \xi(\rho) + \rho \partial_t \partial_x \xi(\rho)\\
&= -\partial_x(\rho u) \partial_x \xi(\rho) - \rho\partial_x( u \partial_x \xi(\rho) + \rho (\partial_x u) \xi'(\rho))\\
&= -\partial_x(\rho u) \partial_x \xi(\rho) - \rho \partial_xu \partial_x \xi(\rho)- \rho u \partial_x^2 \xi(\rho)- \rho\partial_x(\rho (\partial_x u) \xi'(\rho))\\
&= -\partial_x(\rho u \partial_x \xi(\rho)) - \rho \partial_xu \partial_x \xi(\rho)- \rho\partial_x(\rho (\partial_x u) \xi'(\rho))\\
&= -\partial_x(\rho u \partial_x \xi(\rho)) - \partial_x(\rho^2 (\partial_x u) \xi'(\rho)). \label{arbDerEq}
\end{aligned}
\eq
Then, letting $X:= u+ \partial_x \xi(\rho)$, combining Eq. \eqref{arbDerEq} with the momentum equation \eqref{eq:mom} yields
\begin{align}\label{xeq1}
\partial_t (\rho X) = -\partial_x(\rho u X) - \partial_x p(\rho) + \partial_x (\mu(\rho) \partial_x u) - \partial_x (\rho^2 (\partial_x u)\xi'(\rho))+\rho f.
\end{align}
We now choose $\rho^2\xi'(\rho)= \mu(\rho)$, so that the final two terms in \eqref{xeq1} cancel. Thus with this choice,
\be\label{xDef}
X= u + \frac{\partial_x \rho}{\rho^2} \mu(\rho)
\ee
and, by \eqref{xeq1}, $\rho X$ satisfies
\begin{align}
\partial_t (\rho X) = -\partial_x(\rho u X) - \partial_x p(\rho)+\rho f.
\end{align}
Whence, we obtain
\begin{align}
\partial_t (\rho X^2) = -\partial_x(\rho u X^2) - 2X\partial_x p(\rho)+2\rho f X.
\end{align}
Integrating in space
\begin{align}  \nonumber
	\frac{1}{2} \frac{\de}{\de t}\int_{\T}  (\rho X^2) (x,t) \de x  
	&= -\int_{\T} \rho u\frac{\partial_x p(\rho)}{\rho}\de x -\int_{\T} |\partial_x \rho|^2 \mu(\rho) \frac{p'(\rho)}{\rho^2} \de x+\int_\T  f\rho \big(u + \frac{\partial_x \rho}{\rho^2}\mu(\rho)\big)\de x\\ \nonumber
		&= -\int_{\T} \rho u\  \partial_x \pi'(\rho)\de x -\int_{\T} |\partial_x \rho|^2 \mu(\rho) \frac{p'(\rho)}{\rho^2} \de x+\int_\T  f\rho \big(u + \frac{\partial_x \rho}{\rho^2}\mu(\rho)\big)\de x\\\nonumber
		&=- \frac{\de}{\de t}\int_{\T}  \pi(\rho) \de x -\int_{\T} |\partial_x \rho|^2 \mu(\rho) \frac{p'(\rho)}{\rho^2} \de x+\int_\T  f\rho \big(u + \frac{\partial_x \rho}{\rho^2}\mu(\rho)\big)\de x.
		\end{align}
The global balance \eqref{entropyBalance} for entropy $s:=\frac{1}{2}\rho X^2+\pi(\rho)$  follows.

\section{Local well-posedness}\label{appendix:Loc}

\begin{prop}\label{prop:local}
	Assume that $p:\mathbb{R}^+\to \mathbb{R}$ and $\mu:\mathbb{R}^+\to \mathbb{R}^+$ are $C^\infty$ functions away from zero.
	Let $\rho_0$ and $u_0$ belong to $H^k(\T)$ for an integer $k \geq 1$, such that $r_0 := \min_{x\in \T}\rho_0 > 0$. Suppose that for all $T>0$
	\begin{equation*}
	f\in L^2(0,T; H^{k-1}(\T)).
	\end{equation*}
	Then, there exists a $T_0>0$ depending only on $\Vert (\rho_0, u_0)\Vert_{H^k(\T)\times H^k(\T)}$, { $r_0$ and $f$}, and a unique strong solution $(\rho, u)$ to \eqref{eq:mass}-\eqref{eq:initv} on $[0, T_0]$ with data $(\rho_0,u_0)$ such that 
	\[
	\rho\in C(0, T_0; H^k(\T)),\quad u\in C(0, T_0; H^k(\T))\cap L^2(0, T_0; H^{k+1}(\T))
	\]
	and $\rho(x, t)>\frac{r_0}{2}$ for all $(x, t)\in \T\times [0,T_0]$.
\end{prop}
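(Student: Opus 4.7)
I would prove local well-posedness by a standard linearized iteration scheme, exploiting the fact that the system decouples into a transport equation for $\rho$ and a (uniformly) parabolic equation for $u$ as long as $\rho$ stays positive and $\mu(\rho)$ non-degenerate. Starting from $(\rho^0, u^0)\equiv (\rho_0, u_0)$, define $(\rho^{n+1}, u^{n+1})$ by
\begin{align*}
\partial_t \rho^{n+1} + \partial_x(u^n \rho^{n+1}) &= 0, \qquad \rho^{n+1}|_{t=0}=\rho_0,\\
\rho^n \partial_t u^{n+1} + \rho^n u^n\partial_x u^{n+1} &= -\partial_x p(\rho^n) + \partial_x\bigl(\mu(\rho^n)\partial_x u^{n+1}\bigr)+\rho^n f, \qquad u^{n+1}|_{t=0}=u_0.
\end{align*}
The first equation is linear transport, solvable by characteristics; the second is a linear uniformly parabolic equation for $u^{n+1}$ with smooth coefficients (once $\rho^n\ge r_0/2$), solvable by standard parabolic theory.

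The first main step is to establish uniform estimates on a short time interval $[0,T_0]$ depending only on the data. Positivity is propagated by the formula $\rho^{n+1}(t,X^n(t,x))=\rho_0(x)\exp\bigl(-\int_0^t \partial_x u^n(s,X^n(s,x))\,ds\bigr)$, where $X^n$ is the flow of $u^n$; since $u^n\in L^\infty H^k\cap L^2 H^{k+1}\hookrightarrow L^1_t W^{1,\infty}_x$ (after choosing $T_0$ small), one obtains $r_0/2\le \rho^{n+1}\le 2\|\rho_0\|_{L^\infty}$ on $[0,T_0]$. For the $H^k$ estimate of $\rho^{n+1}$, I would commute $\partial_x^k$ with the transport operator and use Kato-Ponce to get $\frac{d}{dt}\|\rho^{n+1}\|_{H^k}\lesssim \|u^n\|_{H^k}\|\rho^{n+1}\|_{H^k}+\|\rho^{n+1}\|_{H^k}\|\partial_x u^n\|_{H^{k-1}}$. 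For $u^{n+1}$ in $L^\infty H^k\cap L^2 H^{k+1}$, differentiate the equation $k$ times, test against $\partial_x^k u^{n+1}$, and control the pressure contribution $\int \partial_x^k(\partial_x p(\rho^n))\,\partial_x^k u^{n+1}\,dx$ by moving one derivative onto $u$ via integration by parts, producing $\int \partial_x^k p(\rho^n)\,\partial_x^{k+1}u^{n+1}\,dx$, then absorbing half of it into the parabolic dissipation $\int \mu(\rho^n)|\partial_x^{k+1}u^{n+1}|^2\,dx\ge c(r_0)\|\partial_x^{k+1}u^{n+1}\|_{L^2}^2$ via Young's inequality. All nonlinear composition terms such as $p(\rho^n), \mu(\rho^n),\mu'(\rho^n)$ are handled by Moser-type estimates using $\rho^n\ge r_0/2$ and the smoothness of $p, \mu$ away from zero. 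These bounds close in a ball of $X_{T_0}:=\{(\rho, u): \rho\in L^\infty(0,T_0; H^k),\ u\in L^\infty(0,T_0; H^k)\cap L^2(0,T_0; H^{k+1})\}$ for $T_0$ small depending only on $\|(\rho_0, u_0)\|_{H^k\times H^k}$, $r_0$ and $\|f\|_{L^2(0,T_0; H^{k-1})}$.

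Next, I would show that $(\rho^n, u^n)$ is Cauchy in the weaker norm $Y_{T_0}:=L^\infty(0,T_0;L^2)\times (L^\infty(0,T_0;L^2)\cap L^2(0,T_0;H^1))$ by writing equations for the differences $\delta\rho^{n+1}=\rho^{n+1}-\rho^n$ and $\delta u^{n+1}=u^{n+1}-u^n$, performing an $L^2$ energy estimate analogous to the $H^k$ one above, and using the uniform $H^k$ bounds to control variable coefficients. Shrinking $T_0$ further produces a contraction factor and a limit $(\rho, u)\in Y_{T_0}$. Interpolation with the uniform $H^k$ bound upgrades the convergence enough to pass to the limit in the nonlinear terms, yielding a strong solution with the claimed regularity; continuity in time in $H^k$ for $\rho$ follows from the transport equation and Bona-Smith mollification, and for $u$ from the parabolic equation. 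Uniqueness follows from an identical $L^2$ energy estimate applied to the difference of two putative solutions.

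The main obstacle is the apparent derivative loss caused by coupling a non-smoothing transport equation for $\rho$ to the parabolic equation for $u$: the top-order pressure term $\partial_x^k\partial_x p(\rho^n)$ is not controlled by the uniform bounds on $\rho^n\in L^\infty H^k$ alone. The resolution is precisely the integration-by-parts trick described above, which trades that derivative against one of the $k{+}1$ parabolic derivatives on $u^{n+1}$; this is the only place the non-degeneracy hypothesis $\mu(\rho_0)>0$ (which propagates by the lower bound on $\rho$) is used in an essential way.
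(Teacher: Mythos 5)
Your proposal is correct and follows essentially the same route as the paper: a linearized transport--parabolic iteration with uniform $H^k \times (H^k\cap L^2H^{k+1})$ bounds on a short interval, contraction in the weaker norm $L^\infty L^2\times(L^\infty L^2\cap L^2H^1)$, and limit by interpolation, with the pressure derivative traded against the parabolic dissipation by integration by parts exactly as the paper does. The only minor implementation differences (your conservative transport $\partial_t\rho^{n+1}+\partial_x(u^n\rho^{n+1})=0$ versus the paper's non-homogeneous form $\partial_t\rho_n + u_{n-1}\partial_x\rho_n = -\rho_{n-1}\partial_x u_{n-1}$, and which iterate supplies the parabolic coefficient) do not change the substance of the argument.
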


\begin{proof}
	{\bf Step 0.} (Iteration Scheme) {We are going to set up an iteration argument and prove that the iterates converge to the desired solution.} 
	Let us first suppose that the initial data $\rho_0, u_0$ are smooth, and let us define $r_0 := \min_{x \in \T} \rho_0$.
	
	Let us initialize our scheme as follows:
	\begin{equation*}
	\begin{aligned}
	&(\rho_{0}(x,t), u_{0}(x,t)) := (\rho_0 (x), u_0(x)),\\
	&\rho_1(x,t) = \rho_0(x),
	\end{aligned}
	\end{equation*}
	and we define $u_1(x,t)$ so that
	\begin{equation}\label{eq:initu1}
	\begin{aligned}
	&\p_t u_{1} - \frac{\mu(\rho_{1})}{\rho_{1}} \p_x^2 u_{1} = - u_{0} \p_x u_{0} - \frac 1 {\rho_{0}}\p_x p(\rho_{0}) + \frac{\p_x \mu(\rho_{0})}{\rho_{0}} \p_x u_{0}+f,\\
	& u_{1}|_{t = 0} = u_0(x,0).		
	\end{aligned}
	\end{equation}
	
	Let now $n \geq 2$. Given $\rho_{n-1}, u_{n-1}$, we iteratively define $\rho_{n}$ first, and subsequently $u_{n}$ as follows
	\begin{align}
	&\p_t \rho_{n} + u_{n-1}\p_x \rho_{n} = - \rho_{n-1}\p_x u_{n-1},\label{eq:itma}\\
	&\p_t u_{n} - \frac{\mu(\rho_{n})}{\rho_{n}} \p_x^2 u_{n} = - u_{n-1} \p_x u_{n-1} - \frac 1 {\rho_{n-1}}\p_x p(\rho_{n-1}) + \frac{\p_x \mu(\rho_{n-1})}{\rho_{n-1}} \p_x u_{n-1}+f,\label{eq:itmo}\\
	&(\rho_{n}, u_{n})|_{t = 0} = (\rho_{0}, u_0).\label{eq:itin}
	\end{align}
	Let $k \geq 1$ be an integer. We let, for ease of notation,
	\begin{equation*}
	{A := \Vert \rho_0 \Vert_{H^k}+\Vert u_0\Vert_{H^k}.}
	\end{equation*}
	
	We are going to prove, by induction on $n$, that there exists $T_0 > 0$ such that the following assertions hold.
	\begin{enumerate}
		\item[\bf{Step 1}:] There exists $u_1 \in C^\infty(\T \times [0, T_0])$ satisfying~\eqref{eq:initu1} and 
		\begin{equation}\label{eq:hyp1}
		\Vert u_1\Vert_{L^\infty(0, T_0; H^k)} \leq 2A, \qquad \int_{0}^{T_0} \int_{\T} \frac{\mu(\rho_1)}{\rho_1} (\p^{k+1}_x u_1)^2\de x\de t \leq 8A.
		\end{equation}
		\item[\bf{Step 2}:] For $n\geq 2$, there exists $\rho_n \in C^\infty(\T \times [0, T_0])$ satisfying~\eqref{eq:itma},~\eqref{eq:itin}, and
		\begin{equation*}
		\rho_n(x,t) \geq \frac {r_0}2 \text{ on } \T \times [0,T_0].
		\end{equation*}
		Furthermore,
		\begin{equation*}
		\Vert \rho_n \Vert_{L^\infty(0, T_0; H^k)} \leq 2A.
		\end{equation*}
		\item[\bf{Step 3}:] There exists $u_n \in C^\infty(\T \times [0, T_0])$ satisfying~\eqref{eq:itmo},~\eqref{eq:itin}, and 
		\begin{equation*}
		\Vert u_n\Vert_{L^\infty(0, T_0; H^k)} \leq 2A, \qquad \int_{0}^{T_0} \int_{\T} \frac{\mu(\rho_n)}{\rho_n} (\p^{k+1}_x u_n)^2\de x\de t \leq 8A.
		\end{equation*}
		\item[\bf{Step 4}:] The sequence $(\rho_n, u_n)$ is Cauchy in the space $L^\infty(0,T_0; L^2) \times \big(L^\infty(0,T_0; L^2)\cap L^2(0,T_0; H^1)\big)$.	
		\item[{\bf Step 5}:] There exist 
		\[
		u\in C(0, T_0; H^{k}) \cap L^2(0, T_0; H^{k+1})
		\]
		and 
		\[
		\rho \in  C(0, T_0; H^{k})
		\] 
		such that $(\rho, u)$ is a strong solution to the system~\eqref{eq:mass}--\eqref{eq:mom} with initial data $(\rho_0, u_0)$. In particular, if $k =3$, said solution is a classical solution.
		\item[{\bf Step 6}:] The constructed strong solution is unique.
	\end{enumerate}
	
	Let us now turn to the details.
	
	{\bf Step 1.} This is the base case of the induction. The existence of $u_1$ in the conditions follows from the general theory of linear parabolic equations, using the fact that $\rho_0$ is bounded from below by $r_0$, and that all functions involved are smooth. The {bound \eqref{eq:hyp1}} is obtained exactly as in {\bf Step 3}, and we omit the details here.
	
	{\bf Step 2.} Let $n \geq 2$. Let us adopt the following nomenclature:
	\[
	\rho := \rho_{n}, \quad \eta:= \rho_{n-1}, \quad u := u_{n}, \quad v := u_{n-1}.
	\]
	We recall the induction hypotheses:
	\begin{equation}\label{eq:indhyp}
	\begin{aligned}
	&\Vert v\Vert_{L^\infty(0, T_0; H^k)} \leq 2A, & \Vert \eta \Vert_{L^\infty(0, T_0; H^k)} \leq 2A,\\
	&\int_{0}^{T_0} \int_{\T} \frac{\mu(\eta)}{\eta} (\p^{k+1}_x v)^2\de x\de t \leq 8A, & \inf_{t \in [0, T_0]}\inf_{x \in \T} \eta(x,t) \geq \frac{r_0} 2.
	\end{aligned}
	\end{equation}
	
	Existence up to time $T_0$ and smoothness for $\rho_n$ follow from the method of characteristics. 
	
	In what follows, $M(\cdot, \ldots, \cdot)$ will always denote a positive, continuous function increasing in all its arguments.  We first notice that, due to the mass equation~\eqref{eq:itma} and the maximum principle, for all $k \geq 1$ and $0~\leq~t~\leq~T_0$,
	\begin{equation}
	\inf_{\T} \rho(\cdot, t) \geq \inf_{\T} \rho_0 - \int_0^t \Vert \eta(\cdot, s) \p_x v(\cdot, s) \Vert_{L^\infty} \de s \geq \inf_{\T} \rho_0 -M(A) \sqrt t { \Vert\p_x^2 v\Vert_{L^2(0,t; L^2)}}.
	\end{equation}
	Hence, restricting $T_0$ to be small only as a function of $A$ and $r_0$, we have
	\[
	\inf_{t \in [0,T_0]} \inf_{x\in \T} \rho(x,t) \geq \frac{r_0}{2}.
	\]
	We have therefore recovered the last induction hypothesis in \eqref{eq:indhyp}.
	
	Let us now differentiate the mass equation~\eqref{eq:itma} $k$-times, multiply it by $\p^k_x \rho$ and integrate by parts
	\begin{equation}\label{eq:ref2}
	\frac 12 \p_t \int_\T (\p_x^k \rho )^2 \de x+ \int_\T \p_x^k \rho \, \p_x^k (v \p_x \rho) \de x= -\int_\T \p_x^k \rho \, \p_x^k (\eta \p_x v) . 
	\end{equation}
	If $k = 1$, we obtain
	\begin{align}
	&\frac 12 \p_t \Vert \rho \Vert^2_{L^2} \leq C \Vert \p_x^2 v \Vert_{L^2} \Vert \rho \Vert^2_{L^2} + \Vert \rho\Vert_{L^2} \Vert \eta \Vert_{L^\infty} \Vert \p_x v \Vert_{L^2}, \label{eq:rhozero}\\
	&\frac 12 \p_t \Vert \p_x \rho \Vert^2_{L^2} \leq C \Vert \p_x^2 v \Vert_{L^2} \Vert \p_x \rho \Vert^2_{L^2} + {2}\Vert \p_x \rho \Vert_{L^2} \Vert \p_x \eta \Vert_{L^2} \Vert \p_x v \Vert_{L^\infty} +\Vert \p_x \rho \Vert_{L^2} \Vert \eta \Vert_{L^\infty} \Vert \p^2_x v \Vert_{L^2}  \label{eq:rhoone}.
	\end{align}
	Combining~\eqref{eq:rhozero} and~\eqref{eq:rhoone}, integrating and using the induction hypotheses, we obtain, for suitable $T_0$ (depending only on $A$ and $r_0$)
	\begin{equation}
	\Vert{\rho}\Vert_{L^\infty(0, T_0; H^1)} \leq 2A.
	\end{equation}
	If $k \geq 2$, in addition to previous estimate~\eqref{eq:rhozero}, we also have, {for the terms appearing in~\eqref{eq:ref2},}
	\begin{equation}
	\begin{aligned}
	&\left|\int_\T \p_x^k \rho \, \p_x^k (v \p_x \rho) \de x\right| = \left|\frac 12 \int_\T v \p_x(\p_x^k \rho)^2 \de x + \int_\T \p^k_x \rho \, ([\p_x^k, v] \p_x\rho) \de x\right| \\
	&\leq \frac 12 \Vert \p_x v \Vert_{L^\infty} \Vert \rho \Vert^2_{H^k} + \Vert \rho \Vert_{H^k}\Vert [\p_x^k, v] \p_x\rho \Vert_{L^2} \leq C \Vert v \Vert_{H^2} \Vert \rho \Vert^2_{H^k} + C { \Vert  \rho \Vert^2_{H^k}}\Vert v\Vert_{H^k}.
	\end{aligned}
	\end{equation}
	Furthermore,
	\begin{equation}\label{eq:rhohigh}
	\begin{aligned}
	&\left| \int_\T \p_x^k \rho \, \p_x^k (\eta \p_x v)\right| \leq \Vert \rho\Vert_{H^k} \Vert \eta  \p_x^{k+1} v\Vert_{L^2} + \Vert \rho\Vert_{H^k} \Vert [\p_x^k, \eta] \p_x v \Vert_{L^2}\\
	&\leq C\Vert \rho\Vert_{H^k} \left(\left\Vert \frac{\eta^3}{\mu(\eta)} \right\Vert^{\frac 12}_{L^\infty} \left\Vert  \left(\frac{\mu(\eta)}{\eta} \right)^{\frac 12} \p_x^{k+1} v \right \Vert_{L^2} + \Vert  \eta \Vert_{H^2} \Vert  v\Vert_{H^k} + \Vert v \Vert_{H^2} \Vert  \eta \Vert_{H^k} \right).
	\end{aligned}
	\end{equation}
	Now, due to our assumptions on $\mu$ and the induction hypothesis, we have
	\begin{equation*}
	\left\Vert \frac{\eta^3}{\mu(\eta)} \right\Vert^{\frac 12}_{L^\infty} \leq M(A, r^{-1}_0),
	\end{equation*}
	where $M$ depends on $\mu$ and is an increasing function of its arguments.
	
	Upon summation of~\eqref{eq:rhozero}~and~\eqref{eq:ref2}, using~\eqref{eq:rhozero} and~\eqref{eq:rhohigh},
	\begin{equation*}\label{eq:gron1}
	\frac 12 \p_t \Vert \rho \Vert^2_{H^k} \leq C \Vert v \Vert_{H^k} \Vert \rho \Vert^2_{H^k} + C \Vert \rho \Vert_{H^k}\Vert \eta \Vert_{H^k}\Vert v \Vert_{H^k} + M(A, r_0^{-1}) \Vert \rho \Vert_{H^k}\left\Vert  \left(\frac{\mu(\eta)}{\eta} \right)^{\frac 12}\p_x^{k+1} v \right \Vert_{L^2}.
	\end{equation*}
	We now use the induction hypothesis~\eqref{eq:indhyp} to obtain, for $0 \leq t \leq T_0$,
	\begin{equation*}
	\p_t \left(\Vert \rho\Vert_{H^k} \exp\left( - 2 C At\right)\right)\leq 4 C A^2 + M(A, r_0^{-1})\left\Vert  \left(\frac{\mu(\eta)}{\eta} \right)^{\frac 12} \p_x^{k+1} v \right \Vert_{L^2}.
	\end{equation*}
	Upon integration, we obtain the following inequality:
	\begin{equation*}
	\Vert \rho \Vert_{H^k} \leq \exp\left(2CAt \right)\left( \Vert \rho_0 \Vert_{H^k} + 4 C A^2 t + 8 A \sqrt t M(A, r_0^{-1})  \right).
	\end{equation*}
	It is now straightforward to choose $T_0$, depending only on $A$ and $r_0$, such that the induction hypothesis
	\begin{equation*}
	\Vert \rho \Vert_{L^\infty(0, T_0; H^k)} \leq 2A
	\end{equation*}
	is recovered for $\rho$, in case $k \geq 2$.
	
	{\bf Step 3.} We now turn to the estimates on the momentum equation~\eqref{eq:itmo}.
	Multiplying such equation by $u$ and integrating by parts yields
	\begin{equation}\label{eq:momest0}
	\begin{aligned}
	\frac 12 \p_t \int_\T u^2 \de x - \int_\T  \frac{\mu(\rho)}{\rho}u \p_x^2 u  \de x = \int_{\T}u \cdot G_0 \, \de x,
	\end{aligned}
	\end{equation}
	where $G_0 := -v \p_x v - \frac 1 \eta \p_x p(\eta) + \frac{\p_x \mu(\eta)}{\eta}\p_x v+f$.
	If $k \geq 1$, this implies
	\begin{equation}\label{eq:uzero}
	\begin{aligned}
	&\frac 12 \p_t \Vert u \Vert^2_{L^2} + \int_\T \frac{\mu(\rho)}{\rho} (\p_x u)^2\de x \leq M(A, r_0^{-1}) \Vert \rho \Vert_{H^1} \Vert \p_x u \Vert_{L^2}\Vert u \Vert_{L^\infty}\\
	&\quad  + C \Vert u \Vert_{L^2}\Vert v \Vert^2_{H^1} + M(A, r_0^{-1}) (\Vert \eta \Vert_{H^1} \Vert u \Vert_{L^2} + \Vert \eta \Vert_{H^1} \Vert v \Vert_{H^1}\Vert u \Vert_{H^1} +  \Vert f \Vert_{L^2}\Vert u \Vert_{L^2}).
	\end{aligned}
	\end{equation}
	Here, we used integration by parts and the following Lemma
	
	\begin{lemm}\label{lem:para}
		Let $f$ be a smooth function away from $0$, and $k$ be a positive integer. Let $u \in H^k(\T)\cap L^\infty(\T)$, and suppose that there exists $r_0 >0$ such that $u \geq r_0$ on $\T$. Then, there exists a positive and continuous function $M$ which depends  only on $f$, $k$ and is increasing in both its arguments such that the following inequality holds:
		\begin{equation}
		\Vert f \circ u \Vert_{H^k(\T)} \leq M\big(\Vert u \Vert_{L^\infty(\T)}, r_0^{-1}\big) \Vert u\Vert_{H^k(\T)}.
		\end{equation}
	\end{lemm}
	\begin{proof}[Proof of Lemma~\ref{lem:para}]
		The proof of the lemma follows from Theorem 2.87 in~\cite{bcd}, \S 2.8.2, and a straightforward cutoff argument.
	\end{proof}
	\begin{rema}
		In what follows, we will always suppress the dependence of $M$ on $k$ and $f$, since they are fixed at the beginning of the argument.
	\end{rema}
	
	Differentiating $k$-times ($k \geq 1$) equation~\eqref{eq:itmo}, multiplying by $\p_x^k u$, and integrating by parts yields
	\begin{equation}\label{eq:momest}
	\begin{aligned}
	\frac 12 \p_t \int_\T (\p_x^k u)^2 \de x - \int_\T  (\p^k_x u) \p_x^k \left(\frac{\mu(\rho)}{\rho} \p_x^2 u \right) \de x = -\int_{\T}(\p_x^{k+1} u) \cdot G_k \, \de x.
	\end{aligned}
	\end{equation}
	Here, we defined
	\begin{equation*}
	G_k := \p_x^{k-1} \left(-v \p_x v - \frac 1 \eta \p_x p(\eta) + \frac{\p_x \mu(\eta)}{\eta}\p_x v+f\right), \quad \text{ for } k \geq 1.
	\end{equation*}
	When $k = 1$, the previous display~\eqref{eq:momest} implies, upon integration by parts, an application of the Cauchy--Schwarz inequality, the induction hypotheses, Lemma~\ref{lem:para} and the bounds obtained in {\bf Step 2}, that
	\begin{equation}\label{eq:momone}
	\begin{aligned}
	&\frac 12 \p_t \Vert \p_x u \Vert^2_{L^2} + \frac 12 \int_\T \frac{\mu(\rho)}{\rho} (\p^2_x u)^2\de x \leq \int_{\T} \frac{\rho}{\mu(\rho)}G_1^2 \de x\\
	& \quad \leq   M(A, r_0^{-1})(\Vert v \Vert^4_{H^1} + \Vert \eta \Vert^2_{H^1}+ \Vert \eta \Vert^2_{H^1} \Vert \p_x v \Vert_{L^2} \Vert \p^2_x v\Vert_{L^2} +  \Vert f \Vert^2_{L^2}).
	\end{aligned}
	\end{equation}
	
	Integrating~\eqref{eq:momone}~and, subsequently,~\eqref{eq:uzero}, upon restricting $T_0$ to be sufficiently small only as a function of $A$ and $r_0$, we have, in case $k=1$,
	\begin{equation*}
	\Vert u \Vert_{L^\infty(0,T_0; H^1)} \leq 2A, \qquad \int_0^{T_0} \frac{\mu(\rho)}{\rho} (\p^2_x u)^2 \de x \de t \leq 8A.
	\end{equation*}
	
	Let's focus now on the case $k \geq 2$. We have
	\begin{equation*}
	\begin{aligned}
	&-\int_\T  (\p^k_x u) \p_x^k \left(\frac{\mu(\rho)}{\rho} \p_x^2 u \right) \de x\\
	&= -\int_\T  (\p^k_x u) \p_x^{k+1} \left(\frac{\mu(\rho)}{\rho} \p_x u \right) \de x + \int_\T  (\p^k_x u) \p_x^{k} \left(\p_x \left(\frac{\mu(\rho)}{\rho}\right) \p_x u \right) \de x\\
	&= \int_\T  \frac{\mu(\rho)}{\rho} (\p_x^{k+1} u)^2\de x  \underbrace{+\int_\T \p_x^{k+1} u \, \left[\p_x^k , \frac{\mu(\rho)}{\rho}\right]  (\p_x u) \de x }_{(a)}\underbrace{-  \int_\T  (\p^{k+1}_x u) \p_x^{k-1} \left(\p_x \left(\frac{\mu(\rho)}{\rho}\right) \p_x u \right) \de x}_{(b)}.
	\end{aligned}
	\end{equation*}
	We estimate the last two terms in the previous display:
	\begin{equation}\label{eq:moma}
	\begin{aligned}
	|(a)|&\leq \frac 1 {10} \int_\T \frac{\mu(\rho)}{\rho} (\p_x^{k+1} u)^2 \de x + C \int_{\T}\frac{\rho}{\mu(\rho)} \left( \left[\p_x^k , \frac{\mu(\rho)}{\rho}\right]  (\p_x u) \right)^2 \de x\\
	&\leq \frac 1 {10} \int_\T \frac{\mu(\rho)}{\rho} (\p_x^{k+1} u)^2 \de x +M(A, r^{-1}_0)\left\Vert \left[\p_x^k , \frac{\mu(\rho)}{\rho}\right]  (\p_x u) \right \Vert_{L^2}\\
	&\leq \frac 1 {10} \int_\T \frac{\mu(\rho)}{\rho} (\p_x^{k+1} u)^2 \de x +M(A, r^{-1}_0) \left( \left \Vert \p_x \frac{\mu(\rho)}{\rho} \right \Vert_{L^\infty} \Vert \p_x^k u \Vert_{L^2} + \Vert \p_x u \Vert_{L^\infty}\left \Vert \p_x^k \frac{\mu(\rho)}{\rho} \right \Vert_{L^2} \right)\\
	& \leq \frac 1 {10} \int_\T \frac{\mu(\rho)}{\rho} (\p_x^{k+1} u)^2 \de x + M \left(A, r_0^{-1}\right)\Vert u\Vert_{H^k}.
	\end{aligned}
	\end{equation}
	Here, $M$ is a continuous and increasing function of its arguments. We used the bounds obtained in {\bf Step~2}, the Kato--Ponce commutator estimate, the fact that $k \geq 2$ and Lemma~\ref{lem:para} quoted below, applied to the function $\frac{\mu(\rho)}{\rho}$.

	Similarly, the following estimate holds true, for $k \geq 2$:
	\begin{equation}\label{eq:momb}
	\begin{aligned}
	|(b)| \leq  \frac 1 {10} \int_\T \frac{\mu(\rho)}{\rho} (\p_x^{k+1} u)^2 \de x + M \left(A, r_0^{-1}\right)\Vert u\Vert_{H^k}.
	\end{aligned}
	\end{equation}
	Again, $M$ is a positive, continuous and increasing function of its arguments.
	
	We now proceed to estimate the terms contained in the RHS of equation~\eqref{eq:momest} (the terms named ``$G$''), in case $k \geq 2$:
	\begin{equation*}
	\begin{aligned}
	\left| \int_{\T}(\p_x^{k+1} u) \cdot G_k \, \de x\right| \leq \frac 1 {10} \int_\T \frac{\mu(\rho)}{\rho} (\p_x^{k+1} u)^2 \de x + 5 \int_\T \frac{\rho}{\mu(\rho)} G_k^2 \de x.
	\end{aligned}	
	\end{equation*}
	Due to the bounds on $\rho$, we have
	\begin{equation*}
	\begin{aligned}
	\int_\T \frac{\rho}{\mu(\rho)} G_k^2 \de x \leq M\left(A, r_0^{-1}\right)\Vert G_k \Vert^2_{L^2}.
	\end{aligned}
	\end{equation*} 
	
	Let us now define two auxiliary functions ${h}$ (the thermodynamic enthalpy) and $\zeta$ in such a way that
	\begin{equation*}
	{h}'(x) = \frac{p'(x)}{x}, \qquad \zeta'(x) = \frac{\mu'(x)}{x}, \text{ for } x > 0.
	\end{equation*}
	We now estimate:
	\begin{equation*}
	\begin{aligned}
	\Vert \p_x^{k-1} (v \p_x v)\Vert^2_{L^2} \leq C \Vert  v \Vert^2_{{H^2}} \Vert v \Vert^2_{H^k} \leq CA^4.
	\end{aligned}
	\end{equation*}
	Furthermore,
	\begin{equation*}
	\begin{aligned}
	\Vert \p_x^{k-1} \left(\frac{\p_x p(\eta)}{\eta}\right) \Vert^2_{L^2} \leq \Vert {h}(\eta)\Vert^2_{H^k} \leq M(A, r_0^{-1}),
	\end{aligned}
	\end{equation*}
	where we used Lemma~\ref{lem:para}, applied to the function ${h}$.
	
	Finally, we have, since $k \geq 2$,
	\begin{equation*}
	\begin{aligned}
	&\left\Vert \p_x^{k-1} \left(\frac{\p_x \mu(\eta)}{\eta}\p_x v \right)\right\Vert^2_{L^2} = \Vert \p_x \zeta(\eta) \p_x v \Vert^2_{H^{k-1}} \leq C \left( \Vert \zeta(\eta) \Vert_{H^k} \Vert \p_x v \Vert_{L^\infty}+ \Vert v \Vert_{H^k} \Vert \p_x \zeta(\eta) \Vert_{L^\infty} \right)\\
	&\quad \leq M(A, r_0^{-1}).
	\end{aligned}
	\end{equation*}
	
	Hence, for the term $G_k$, we have
	\begin{equation}\label{eq:sumgk}
	\begin{aligned}
	\left| \int_{\T}(\p_x^{k+1} u) \cdot G_k \, \de x\right| \leq \frac 1 {10} \int_\T \frac{\mu(\rho)}{\rho} (\p_x^{k+1} u)^2 \de x + M(A, r_0^{-1}) {(1+\Vert f \Vert^2_{H^{k-1}})}.
	\end{aligned}
	\end{equation}
	
	Putting together estimates~\eqref{eq:momest0},~\eqref{eq:momest},~\eqref{eq:moma},~\eqref{eq:momb},~\eqref{eq:sumgk}, and ignoring the positive integral term in the LHS, we obtain the inequality
	\begin{equation*}
	\begin{aligned}
	\frac 1 2 \p_t \Vert u \Vert^2_{H^k} \leq M\left(A, r_0^{-1} \right) \Vert u \Vert_{H^k} + M\left(A, r_0^{-1}\right){(1+\Vert f \Vert^2_{H^{k-1}})}.
	\end{aligned}
	\end{equation*}
	Using Gr\"onwall's inequality, upon restricting $T_0$ to be small depending only on $A$, $r_0$ {and $f$}, we deduce that
	\begin{equation}
	\Vert u \Vert_{L^\infty(0,T_0; H^k)} \leq 2A.
	\end{equation}
	We now revisit the same estimates without discarding the positive integral term in the LHS. We obtain, upon restricting $T_0$ to be smaller, depending only on $A$ and $r_0$ {and $f$}, that
	\begin{equation}
	\int_0^{T_0} \int_{\T} \frac{\mu(\rho)}{\rho} (\p^{k+1}_x u)^2 \de x\de t \leq 8A.
	\end{equation}
	We have therefore recovered the induction hypotheses~\ref{eq:indhyp}, and in particular the sequence $(\rho_n, u_n)$ is uniformly bounded in $L^\infty(0,T_0; H^k(\T)) \times (L^\infty(0,T_0; H^k(\T)) \cap L^2(0,T_0; H^{k+1}(\T)) $.
	
	{\bf Step 4.} We now show that, for some $T_0$, depending only on $A, r_0$, the sequence $(\rho_{n}, u_{n})$ is Cauchy in the space $L^\infty(0,T_0; L^2) \times (L^\infty(0,T_0; L^2)\cap L^2(0, T_0; L^2))$.
	
	Let's first consider the equation satisfied by $\delta u_n:=u_{n+1}-u_n$:
	\begin{equation}\label{eq:diffu}
	\begin{aligned}
	&\p_t (\delta u_{n}) - \frac{\mu(\rho_{n+1})}{\rho_{n+1}} \p_x^2 u_{n+1} + \frac{\mu(\rho_{n})}{\rho_{n}} \p_x^2 u_{n}\\
	&= \frac 12 \p_x( u^2_{n}-u^2_{n-1}) + \p_x ({h}(\rho_{n})- {h}(\rho_{n-1}))+ \p_x \zeta(\rho_{n}) \, \p_x u_{n} - \p_x \zeta(\rho_{n-1})\, \p_x u_{n-1}.
	\end{aligned}
	\end{equation}
	Recall that we defined ${h}$ and $\zeta$ so that the following equalities hold true:
	\begin{equation*}
	\p_x {h}(\rho) = \frac{\p_x p(\rho)}{\rho}, \qquad \zeta(\rho) = \frac{\p_x \mu(\rho)}{\rho}.
	\end{equation*}
	We now multiply equation~\eqref{eq:diffu} by $\delta u_{n}$ and integrate by parts. We have:
	\begin{equation*}
	\begin{aligned}
	&\int_{\T } (\delta u_n)\left(- \frac{\mu(\rho_{n+1})}{\rho_{n+1}} \p_x^2 u_{n+1} + \frac{\mu(\rho_{n})}{\rho_{n}} \p_x^2 u_{n} \right)\de x\\
	&=\underbrace{- \int_{\T } (\delta u_n) \frac{\mu(\rho_{n+1})}{\rho_{n+1}} \p_x^2( \delta u_{n})\de x }_{(a)}+ \underbrace{\int_{\T} \left(\frac{\mu(\rho_{n})}{\rho_{n}} - \frac{\mu(\rho_{n+1})}{\rho_{n+1}}\right) \p_x^2 u_{n } (\delta u_n)\de x}_{(b)}.
	\end{aligned}
	\end{equation*}
	Note that, due to {\bf Step 3}, there exists $c = c(A, r_0)$ such that, up to time $T_0$, there holds $\frac{\mu(\rho_{i})}{\rho_{i}} \geq c$ for all integers $i \geq 0$.
	
	Hence, for the term in $(a)$, upon integration by parts,
	\begin{equation*}
	\begin{aligned}
	&(a) \geq c \Vert \p_x (\delta u_n)\Vert^2_{L^2}- \frac 1c \Vert \p_x \frac{\mu(\rho_{n})}{\rho_{n}}\Vert_{L^2} \Vert \delta u_n \Vert_{L^\infty}\Vert \p_x (\delta u_n) \Vert_{L^2} \\
	&\quad \geq  c \Vert \p_x (\delta u_n)\Vert^2_{L^2} - M(A, r^{-1}_0)\Big(\Vert \delta u_n \Vert^{\frac 12}_{L^2}\Vert \p_x (\delta u_n) \Vert^{\frac 32}_{L^2}+ \Vert \delta u_n \Vert_{L^2}\Vert \p_x (\delta u_n) \Vert_{L^2}\Big)\\
	&\quad \geq \frac c2 \Vert \p_x (\delta u_n)\Vert^2_{L^2}  - M(A, r_0^{-1})\Vert  \delta u_{n}\Vert^2_{L^2}.
	\end{aligned}
	\end{equation*}
	Here, we used Lemma~\ref{lem:para}, the Gagliardo--Nirenberg--Sobolev inequality and the Young inequality.
	
	We now estimate
	\begin{equation*}
	\begin{aligned}
	(b)\geq - M(A, r_0^{-1}) \Vert \delta \rho_{n}\Vert_{L^2} \Vert \p^2_x u_n \Vert_{L^2}\Vert \delta u_n\Vert_{L^2}^{\frac 12}\Vert \delta u_n\Vert_{H^1}^{\frac 12}.
	\end{aligned}
	\end{equation*}
	
	Let us now turn to the terms appearing in the RHS of~\eqref{eq:diffu}. We define
	\begin{equation*}
	\begin{aligned}
	&\underbrace{\int_{\T} \frac 12 \p_x( u^2_{n}-u^2_{n-1}) (\delta u_n)\de x}_{(c)} +\underbrace{ \int_{\T} (\delta u_n) \p_x (h(\rho_{n})- h(\rho_{n-1})) \de x}_{(d)}\\
	&\qquad \qquad\qquad\qquad +\underbrace{ \int_{\T} (\delta u_n) \left(\p_x \zeta(\rho_{n}) \, \p_x u_{n} - \p_x \zeta(\rho_{n-1})\, \p_x u_{n-1}\right) \de x}_{(e)}.
	\end{aligned}
	\end{equation*}
	Then, for $(c)$, we have, {after integration by parts,}
	\begin{equation*}
	\begin{aligned}
	&|(c)| \leq M(A) {\Vert \p_x( \delta u_{n}) \Vert_{L^2} \Vert \delta u_{n-1}\Vert_{L^2}}\leq \frac{1}{10c} \Vert \p_x(\delta u_n) \Vert^2_{L^2} + M(A){ \Vert \delta u_{n-1} \Vert_{L^2}^2}.
	\end{aligned}
	\end{equation*}
	Concerning the term $(d)$, instead,
	\begin{equation*}
	\begin{aligned}
	|(d)| &= \left| \int_{\T} \p_x (\delta u_n) \, ({h}(\rho_{n})- {h}(\rho_{n-1}))\de x \right| \leq \frac 1 {10c} \Vert \p_x(\delta u_n)\Vert^2_{L^2} + M( A,r_0^{-1}) \Vert \delta \rho_{n-1}\Vert_{L^2}^2.
	\end{aligned}
	\end{equation*}
	Again, we used the fact that, due to the uniform bounds on $\rho_{n}$, ${h}$ is Lipschitz of constant depending only on $A$ and $r_0$.
	
	Finally, concerning $(e)$,
	\begin{equation*}
	\begin{aligned}
	|(e)| &\leq \left|\int_\T (\delta u_n) \p_x \zeta(\rho_{n}) \p_x(\delta u_{n-1}) \de x \right| + \left|\int_\T (\delta u_n) \p_x (\zeta(\rho_{n})-\zeta(\rho_{n-1})) \p_x u_{n-1} \de x \right|\\
	&\leq \Vert \delta u_n \Vert_{L^\infty}\Vert \p_x \zeta(\rho_n) \Vert_{L^2}\Vert \p_x( \delta u_{n-1})\Vert_{L^2}
	+ \left|\int_\T  (\zeta(\rho_{n})-\zeta(\rho_{n-1})) \p_x((\delta u_n) \p_x u_{n-1} )\de x \right|\\
	&\leq  M(A, r_0^{-1}) \Big(\Vert \delta u_{n}\Vert^{\frac 12}_{L^2}\Vert \p_x(\delta u_{n})\Vert^{\frac 12}_{L^2} \Vert \p_x(\delta u_{n-1})\Vert_{L^2} +\Vert \p_x(\delta u_{n-1})\Vert_{L^2} \Vert \delta u_{n}\Vert_{L^2}\Big)\\
	&\quad + M(A, r_0^{-1})( \Vert \delta \rho_{n-1}\Vert_{L^2}\Vert \p_x\delta u_{n}\Vert_{L^2} \Vert \p_x^2 u_n \Vert^{\frac 12}_{L^2}+\Vert \delta \rho_{n-1}\Vert_{L^2}\Vert \delta u_{n}\Vert_{L^\infty}\Vert \p_x^2 u_n \Vert_{L^2})
	\end{aligned}
	\end{equation*}
	where $\delta \rho_{n-1}:=\rho_n-\rho_{n-1}$.
	Putting together the estimates on the momentum equation, we have
	\begin{equation*}
	\begin{aligned}
	&\frac 12 \p_t \Vert \delta u_n \Vert^2_{L^2} + \frac{1}{10c} \Vert \p_x (\delta u_n)\Vert^2_{L^2}  \\
	& \leq M(A, r_0^{-1})(\Vert  \delta u_{n}\Vert^2_{L^2} 
	+ \Vert \delta u_{n-1} \Vert_{L^2}^2+ \Vert \delta \rho_{n-1}\Vert_{L^2}^2)\\
	&\quad + M(A, r_0^{-1}) \Vert \delta \rho_{n}\Vert_{L^2} \Vert \p^2_x u_n \Vert_{L^2}\Vert \delta u_n\Vert_{L^2}^{\frac 12}\Vert \p_x(\delta u_n)\Vert_{L^2}^{\frac 12}\\
	&\quad + M(A, r_0^{-1}) (\Vert \delta u_{n}\Vert^{\frac 12}_{L^2}\Vert \p_x(\delta u_{n})\Vert^{\frac 12}_{L^2} \Vert \p_x(\delta u_{n-1})\Vert_{L^2} +\Vert \p_x(\delta u_{n-1})\Vert_{L^2} \Vert \delta u_{n}\Vert_{L^2})\\
	&\quad + M(A, r_0^{-1})( \Vert \delta \rho_{n}\Vert_{L^2}\Vert \p_x\delta u_{n}\Vert_{L^2} \Vert \p_x^2 u_n \Vert^{\frac 12}_{L^2}+\Vert \delta \rho_{n}\Vert_{L^2}\Vert \delta u_{n}\Vert_{L^\infty} \Vert \p_x^2 u_n \Vert_{L^2}).
	\end{aligned}
	\end{equation*}
	Upon integration between time $s = 0$ and $s= t$, using H\"older's inequality and the bounds obtained in {\bf Step~1},
	\begin{equation}\label{eq:int1}
	\begin{aligned}
	&\frac 12 \Vert (\delta u_n) (\cdot, t)\Vert^2_{L^2} + \frac 1{10c} \Vert \p_x(\delta u_n) \Vert^2_{L^2(0,t; L^2)}\\
	&\quad \leq M(A, r_0^{-1}) (\Vert  \delta u_{n}\Vert^2_{L^2(0,t; L^2)} 
	+ \Vert \delta u_{n-1} \Vert_{L^2(0,t; L^2)}^2+ \Vert \delta \rho_{n-1}\Vert_{L^2(0,t; L^2)}^2) \\
	&\quad + M(A, r_0^{-1}) t^{\frac 14} \Vert \delta \rho_{n} \Vert_{L^\infty(0, t; L^2)} \Vert \delta u_n \Vert^{\frac 12}_{L^\infty(0, t; L^2)} \Vert \p_x(\delta u_n)\Vert^{\frac 12}_{L^2(0,t; L^2)}\\
	&\quad+ M(A, r_0^{-1}) t^{\frac 1 4} \Vert \delta u_n \Vert^{\frac 12}_{L^\infty(0, t; L^2)} \Vert \p_x(\delta u_{n}) \Vert^{\frac 12}_{L^2(0, t; L^2)} \Vert \p_x(\delta u_{n-1})\Vert_{L^2(0,t; L^2)}\\
	&\quad + M(A, r_0^{-1}) t^{\frac 12}  \Vert \p_x(\delta u_{n-1})\Vert_{L^2(0,t; L^2)} \Vert \delta u_{n}\Vert_{L^\infty(0,t;L^2)}\\
	&\quad + M(A, r_0^{-1}) t^{\frac 14} \Vert \delta \rho_{n-1}\Vert_{L^\infty(0,t; L^2)}\Vert \p_x(\delta u_n)\Vert_{L^2(0,t; L^2)} \\
	&\quad + M(A, r_0^{-1})t^{\frac 14} \Vert\delta\rho_{n-1} \Vert_{L^\infty(0,t; L^2)}\Vert \p_x\delta u_n \Vert_{L^2(0,t; L^2)} \\	
	&\quad + M(A, r_0^{-1})t^{\frac 14} \Vert\delta\rho_{n-1} \Vert_{L^\infty(0,t; L^2)}\Vert \delta u_n \Vert^{\frac 12}_{L^\infty(0,t; L^2)} \Vert \p_x( \delta u_n )\Vert^{\frac 12}_{L^2(0,t; L^2)}  \\
	&\quad \leq \frac 1{20c} \Vert \p_x(\delta u_n) \Vert^2_{L^2(0,t; L^2)}+ M(A, r_0^{-1}) t^{\frac 14} (\Vert  \delta u_{n}\Vert^2_{L^\infty(0,t; L^2)} 
	+ \Vert \delta u_{n-1} \Vert_{L^\infty(0,t; L^2)}^2 +\\
	&\hspace{100pt}\Vert \delta \rho_{n-1}\Vert_{L^\infty(0,t; L^2)}^2 +  \Vert \p_x(\delta u_{n-1})\Vert^{2}_{L^2(0,t; L^2)}).
	\end{aligned}
	\end{equation}
	
	Let us now calculate the equation satisfied by differences of $\rho_n$:
	\begin{equation}\label{eq:diffr}
	\p_t (\delta \rho_{n}) = - u_n \p_x \rho_{n+1} + u_{n-1}\p_x \rho_n - \rho_n \p_x u_n + \rho_{n-1}\p_x u_{n-1}.
	\end{equation}
	Multiplying equation~\eqref{eq:diffr} by $\delta \rho_{n}$, we obtain
	\begin{equation*}
	\begin{aligned}
	\frac 12 \p_t \Vert\delta \rho_{n}\Vert^2_{L^2} =&-\underbrace{\int_\T (\delta \rho_{n})(u_{n}\p_x \rho_{n+1}-u_{n-1}\p_x \rho_{n})\de x}_{(a)}-\underbrace{\int_{\T} (\delta \rho_{n}) (\rho_{n} \p_x u_{n} - \rho_{n-1} \p_x u_{n-1})\de x}_{(b)}.
	\end{aligned}
	\end{equation*}
	Considering $(a)$, we have, integrating by parts, using Gagliardo--Nirenberg--Sobolev and H\"older's inequality,
	\begin{equation*}
	\begin{aligned}
	|(a)| &\leq \left| \int_{\T}(\delta \rho_{n})(\delta u_{n-1})\p_x \rho_{n+1} \de x\right| + \left| \int_{\T}\p_x(\delta \rho_{n})(\delta \rho_{n})u_{n-1} \de x\right| \\
	&\leq M(A) (\Vert \delta \rho_{n}\Vert_{L^2}\Vert \delta u_{n-1}\Vert^{\frac 12}_{H^1}\Vert \delta u_{n-1}\Vert^{\frac 12}_{L^2} + \Vert \delta \rho_{n}\Vert^2_{L^2} \Vert \p^2_x u_n \Vert^{\frac 12}_{L^2}).
	\end{aligned}
	\end{equation*}
	On the other hand, $(b)$ yields
	\begin{equation*}
	\begin{aligned}
	|(b)| &\leq \left| \int_{\T}(\delta \rho_{n})(\delta \rho_{n-1})\p_x u_{n} \de x\right| + \left| \int_{\T}(\delta \rho_{n}) \p_x(\delta u_{n-1})\rho_{n-1} \de x\right| \\
	&\leq M(A) (\Vert \delta \rho_{n}\Vert^2_{L^2} + \Vert \delta \rho_{n-1}\Vert^2_{L^2}) \Vert \p^2_x u_n \Vert^{\frac 12}_{L^2} +M(A) \Vert \p_x (\delta u_{n-1}) \Vert_{L^2} \Vert \delta \rho_{n} \Vert_{L^2}.
	\end{aligned}
	\end{equation*}
	Putting together the estimates on the mass equation yields
	\begin{equation*}
	\begin{aligned}
	&\frac 12 \p_t \Vert\delta \rho_{n}\Vert^2_{L^2}\\
	&\leq M(A) \Big(\Vert \delta \rho_{n}\Vert_{L^2}\Vert \p_x(\delta u_{n-1})\Vert^{\frac 12}_{L^2}\Vert \delta u_{n-1}\Vert^{\frac 12}_{L^2} + \Vert \delta \rho_{n}\Vert^2_{L^2} \Vert \p^2_x u_n \Vert^{\frac 12}_{L^2}\Big)+M(A) \Vert \delta \rho_{n}\Vert_{L^2}\Vert \delta u_{n-1}\Vert_{L^2}\\
	&\quad + M(A) (\Vert \delta \rho_{n}\Vert^2_{L^2} + \Vert \delta \rho_{n-1}\Vert^2_{L^2}) \Vert\p^2_x u_n\Vert^{\frac 12}_{L^2} +M(A) \Vert \p_x (\delta u_{n-1}) \Vert_{L^2} \Vert \delta \rho_{n} \Vert_{L^2}.
	\end{aligned}
	\end{equation*}
	Upon integration, the previous display yields
	\begin{equation}\label{eq:int2}
	\begin{aligned}
	\frac 12 \Vert \delta \rho_n(t, \cdot)\Vert^2_{L^2} &\leq M(A)t^{\frac 34} \Vert \delta \rho_{n}\Vert_{L^\infty(0,t; L^2)}\Vert \p_x(\delta u_{n-1})\Vert^{\frac 12}_{L^2(0,t;L^2)}\Vert \delta u_{n-1}\Vert^{\frac 12}_{L^\infty(0,t;L^2)} \\
	&\quad +M(A) t^{\frac 3 4}\Vert \delta \rho_{n} \Vert^2_{L^\infty(0,t; L^2)} + M(A) t (\Vert \delta \rho_{n}\Vert^2_{L^\infty(0,t; L^2)} + \Vert \delta u_{n-1}\Vert^2_{L^\infty(0,t; L^2)})\\ \nonumber
	& \quad+ M(A)t^{\frac 34} (\Vert \delta \rho_{n}\Vert^2_{L^\infty(0,t; L^2)} + \Vert \delta \rho_{n-1}\Vert^2_{L^\infty(0,t; L^2)}) \\
	&\quad+ M(A)t^{\frac 12} \Vert \p_x (\delta u_{n-1}) \Vert_{L^2(0,t;L^2)} \Vert \delta \rho_{n} \Vert_{L^\infty(0,t;L^2)}\\
	& \leq M(A)t^{\frac 12}\Big(\Vert \delta \rho_{n}\Vert^2_{L^\infty(0,t; L^2)}+ \Vert \p_x(\delta u_{n-1})\Vert^{2}_{L^2(0,t;L^2)}+\Vert \delta u_{n-1}\Vert^{2}_{L^\infty(0,t;L^2)} \\
	&\hspace{60pt} +\Vert \delta \rho_{n-1}\Vert^2_{L^\infty(0,t; L^2)}\Big).
	\end{aligned}
	\end{equation}
	Combining now~\eqref{eq:int1} and~\eqref{eq:int2}, we obtain, for suitably small $t$ depending only on $A$ and $r_0$,
	\begin{equation*}
	\begin{aligned}
	&\frac 14 \Vert \delta \rho_{n}\Vert^2_{L^\infty(0,t; L^2)} + \frac 1 4\Vert \delta u_{n}\Vert^2_{L^\infty(0,t; L^2)}+ \frac 1 {20c}\Vert \p_x( \delta u_{n}) \Vert^2_{L^2(0,t; L^2)}\\
	&\leq  M(A, r_0^{-1})t^{\frac 14}(\Vert \p_x(\delta u_{n-1})\Vert^{2}_{L^2(0,t;L^2)}+\Vert \delta u_{n-1}\Vert^{2}_{L^\infty(0,t;L^2)} +\Vert \delta \rho_{n-1}\Vert^2_{L^\infty(0,t; L^2)} ).
	\end{aligned}
	\end{equation*}
	
	Upon suitable choice of $T_0$, this implies that the sequence $(\rho_n, u_n)$ is Cauchy in the space $L^\infty(0, T_0; L^2)\times (L^\infty(0, T_0; L^2) \cap L^2(0, T_0; H^1) )$.
	
	{\bf Step 5}. Denote 
	\[
	X^m=L^\infty(0,T_0; H^m) \times \big(L^\infty(0,T_0; H^m)\cap L^2(0,T_0; H^{m+1})\big)
	\]
	a Banach space with its canonical norm. We have proved in the previous steps that $(\rho_n, u_n)$ is bounded in $X^k$ and Cauchy in $X^{k-1}$. The latter implies that  $(\rho_n, u_n)$ converges to some $(\rho, u)$ in $X^{k-1}$. The former implies that some subsequence $(\rho_{n_j}, u_{n_j})$ converges weak-* to some $(\rho_*, u_*)$ in $X^k$. Since both weak-* convergence in $X^k$ and strong convergence in $X^{k-1}$ imply convergence in the sense of distributions we deduce that $(\rho, u)=(\rho_*, u_*)\in X^k$. It can be easily verified that $(\rho, u)$ is a strong solution  to the system~\eqref{eq:mass}--\eqref{eq:mom}. Moreover, since $\rho_n\to \rho$ strongly in $L^2(0, T_0; L^2)$ and $(\rho_n)$ is bounded in $L^\infty(0, T_0; H^1)$ it follows by interpolation that $\rho_n\to \rho$ strongly in $L^\infty(0, T_0; H^{3/4})$, and hence in $L^\infty(0, T_0; L^\infty)$.  This combined with the fact that $\rho_n(x, t)\ge \frac{r_0}{2}$ for all $(x, t)\in \T\times [0, T_0]$ (see {\bf Step 2}) yields
	\[
	\rho(x, t)\ge \frac{r_0}{2}\quad \forall (x, t)\in \T\times [0, T_0].
	\]

	{\bf Step 6.} We now establish uniqueness of strong solutions. Consider solutions $(\rho_1, u_1)$ and $(\rho_2,u_2)$, such that
	\[
	\rho_i \in C(0, T_0; H^k(\T)),\quad u_i\in C(0, T_0; H^k(\T))\cap L^2(0, T_0; H^{k+1}(\T)), \text{ for } i =1,2.
	\] 
	and let $(\delta \rho,\delta u)= (\rho_1-\rho_2, u_1-u_2)$.  We have 
	\begin{align}
	& \partial_t  \delta u +  \delta u \partial_x  u_1 +  u_2 \partial_x  \delta u =- \partial_x ((\rho_1)- (\rho_2) ) + \rho_1^{-1}\partial_x(\mu(\rho_1) \partial_xu_1)- \rho_2^{-1}\partial_x(\mu(\rho_2) \partial_xu_2), \label{eq:uniqm}\\
	&\partial_t \delta \rho + \partial_x (u_1 \delta \rho + \rho_2 \delta u   ) = 0, \label{eq:uniqmass}\\
	&(\delta \rho,\delta u)|_{t = 0} = (0,0) 
	\end{align}
	We now notice that equation~\eqref{eq:uniqm} is the same as equation~\eqref{eq:diffu}, upon formally substituting $n = 1$ in the LHS, and $n =2$ in the RHS. Similarly, recalling~\eqref{eq:diffr}, we have
	\[
	\underbrace{\p_t (\delta \rho_{n})}_{(a)} = \underbrace{- u_n}_{(b)} \underbrace{\p_x \rho_{n+1}}_{(a)} + \underbrace{ u_{n-1}}_{(b)}\underbrace{\p_x \rho_n}_{(a)} \underbrace{- \rho_n \p_x u_n + \rho_{n-1}\p_x u_{n-1}}_{(b)}.
	\]
	Formally substituting $n = 1$ in terms $(a)$, and $n =2$ in terms $(b)$, we obtain~\eqref{eq:uniqmass}. It is then straightforward to see that the same estimates as in {\bf Step 4} yield uniqueness of strong solutions.
\end{proof}

\subsection*{Acknowledgment} We thank Toan Nguyen and the reviewers for interesting comments. The research of PC is partially supported by NSF grant DMS-1713985. The research of TD is partially supported by NSF grant DMS-1703997. The research of HN is partially supported by NSF grant DMS-1600028 and DMS-1265818.

\end{document}